\theoremstyle{plain}
\newtheorem{theorem}{Theorem}[section]
\newtheorem{corollary}[theorem]{Corollary}
\newtheorem{lemma}[theorem]{Lemma}
\newtheorem{proposition}[theorem]{Proposition}
\theoremstyle{definition}
\newtheorem{definition}[theorem]{Definition}
\newtheorem{example}[theorem]{Example}
\theoremstyle{remark}
\newtheorem{remark}[theorem]{Remark}
\newcommand{\A}{\mathcal{A}}
\newcommand{\C}{\mathbb{C}}
\newcommand{\F}{\mathbb{F}}
\newcommand{\scL }{\mathcal{L}}
\newcommand{\scJ}{\mathcal{J}}
\newcommand{\Q}{\mathbb{Q}}
\newcommand{\R}{\mathbb{R}}
\newcommand{\scR}{\mathcal{R}}
\newcommand{\scS}{\mathcal{S}}
\newcommand{\scP}{\mathcal{P}}
\newcommand{\Z}{\mathbb{Z}}
\newcommand{\h}{{\rm h}}
\newcommand{\asc}{{\rm asc}}
\newcommand{\dsc}{{\rm dsc}}
\newcommand{\I}{{\mathcal{I}}}
\newcommand{\M}{\mathcal{M}}
\newcommand{\lcm}{\operatorname{lcm}}
\newcommand{\quasi}{\operatorname{quasi}}
\newcolumntype{K}[1]{>{\centering\arraybackslash}p{#1}}
\begin{document}

\title[Eulerian polynomials for Weyl subarrangements]{Eulerian polynomials for subarrangements of Weyl arrangements}

\date{\today}

\begin{abstract}
\label{sec:intro}
Let $\mathcal{A}$ be a Weyl arrangement. We introduce and study the notion of $\mathcal{A}$-Eulerian polynomial producing an Eulerian-like polynomial for any subarrangement of $\mathcal{A}$. This polynomial together with shift operator describe how the characteristic quasi-polynomial of a new class of arrangements containing ideal subarrangements of $\mathcal{A}$ can be expressed in terms of the Ehrhart quasi-polynomial of the fundamental alcove. The method can also be extended to define two types of deformed Weyl subarrangements containing the families of the extended Shi, Catalan, Linial arrangements and to compute their characteristic quasi-polynomials. We obtain several known results in the literature as specializations, including the formula of the characteristic polynomial of $\mathcal{A}$ via Ehrhart theory due to Athanasiadis (1996), Blass-Sagan (1998), Suter (1998) and Kamiya-Takemura-Terao (2010); and the formula relating the number of coweight lattice points in the fundamental parallelepiped with the Lam-Postnikov  Eulerian polynomial due to the third author. 
 \end{abstract}

 \author{Ahmed Umer Ashraf}
\address{Ahmed Umer Ashraf, University of Western Ontario, Canada.}
\email{aashra9@uwo.ca}
\author{Tan Nhat Tran}
\address{Tan Nhat Tran, Department of Mathematics, Hokkaido University, Kita 10, Nishi 8, Kita-Ku, Sapporo 060-0810, Japan.}
\email{trannhattan@math.sci.hokudai.ac.jp}
\author{Masahiko Yoshinaga}
\address{Masahiko Yoshinaga, Department of Mathematics, Hokkaido University, Kita 10, Nishi 8, Kita-Ku, Sapporo 060-0810, Japan.}
\email{yoshinaga@math.sci.hokudai.ac.jp}


\subjclass[2010]{Primary 52C35, Secondary 17B22}
\keywords{Characteristic quasi-polynomial, Eulerian polynomial, Ehrhart quasi-polynomial, Worpitzky partition, root system, ideal subarrangement}

\date{\today}
\maketitle


\section{Introduction}
 \textbf{Motivation.}
One of typical problems in enumerative combinatorics is to count the sizes of sets depending upon a positive integer $q$.
This often gives rise to polynomials. 
For instance, the \emph{chromatic polynomial} of an undirected graph, going back to Birkhoff and Whitney, encodes the number of ways of coloring the vertices with $q$ colors so that adjacent vertices get different colors.
However, it may happen that enumerating the cardinalities of sets leads to quasi-polynomials. 
Generally speaking, a quasi-polynomial is a refinement of polynomials, of which the coefficients may not come from a ring but instead are periodic functions with integral periods. 
Thus a quasi-polynomial is made of a bunch of polynomials, the \emph{constituents} of the quasi-polynomial.
One of the most classical examples in the theory is that the number of integral points in the $q$-fold dilation of a rational polytope agrees with a quasi-polynomial in $q$, broadly known as the \emph{Ehrhart quasi-polynomial}.

We are interested in the connection between the counting problems and the arrangement theory. 
A finite list (multiset) $\A$ of vectors in $\Z^\ell$ determines an arrangement $\A(\R)$ of hyperplanes in the vector space $\R^\ell$, an arrangement $\A(\mathbb{S}^1)$ of subtori in the torus $(\mathbb{S}^1)^\ell$, and especially an arrangement $\A(\Z_q)$ of subgroups in the finite abelian group $\Z_q^\ell$.
Enumerating the cardinality of the complement of  $\A(\Z_q)$ produces a quasi-polynomial, the  \emph{characteristic quasi-polynomial} $\chi_{\A}^{\mathrm{quasi}}(q)$ of $\A$ \cite{KTT08}.
This single quasi-polynomial encodes a number of combinatorial and topological information of several types of arrangements and has generated increasing interest recently (e.g., \cite{CW12, BM14,  Y18L, Y18W, TY19, T19}). 
Among the others, $\chi_{\A}^{\mathrm{quasi}}(q)$ has the first constituent identical with the \emph{characteristic polynomial} $\chi_{\A(\R)}(t)$ of $\A(\R)$ which justified its name  (e.g., \cite{A96, KTT08}), and the last constituent identical with the characteristic polynomial $\chi_{\A(\mathbb{S}^1)}(t)$ of $\A(\mathbb{S}^1)$ \cite{LTY, TY19}. 
One of the methods used in \cite{KTT08} for showing that $\chi_{\A}^{\mathrm{quasi}}(q)$ is indeed a quasi-polynomial is to express it as a sum of the Ehrhart quasi-polynomials of rational polytopes, or in the sense of \cite{BZ06}, as the Ehrhart quasi-polynomial of an  ``inside-out'' polytope. 
Such an expression is certainly interesting as it reveals the connection between two seemingly unrelated quasi-polynomials, one would hope for a more explicit expression if the list $\A$ was chosen to be a more special vector configuration.

 \textbf{Objective.} 
A particularly well-behaved class of the hyperplane arrangements is that of \emph{Weyl arrangements}. 
More precisely, if $\A=\Phi^+$ is a positive system of an irreducible root system $\Phi$, then $\A(\R)$ is called the Weyl arrangement of $\A$. 
It is proved that $\chi_{\Phi^+}^{\mathrm{quasi}}(q)$ is expressed in terms of the Ehrhart quasi-polynomial ${\rm L}_{A^\circ}(q)$ of the fundamental alcove $A^\circ$, the Weyl group, and the index of connection of  $\Phi$ (e.g., \cite{A96, BS98, Su98, KTT10}).
Thus we arrive at a natural and essential problem that for which subset $\Psi\subseteq \Phi^+$, $\chi_{\Psi}^{\mathrm{quasi}}(q)$ can be computed  by using the fundamental invariants of $\Phi$, and more importantly, by means of the Ehrhart quasi-polynomials.
Some partial results are known. 
If the root system $\Phi$  is of classical type and $\Psi$ is an \emph{ideal} of $\Phi^+$, then $\chi_{\Psi}^{\mathrm{quasi}}(q)$ can be computed from information of the signed graph associated with $\Psi$ \cite{T19}. 
A result due to \cite{Y18W} applied to any root system, asserts that $\chi_{\emptyset}^{\mathrm{quasi}}(q)$ (or simply $q^\ell$) can be written in terms of the Lam-Postnikov  \emph{Eulerian polynomial} \cite{LP18}, shift operator, and ${\rm L}_{A^\circ}(q)$. 

 \textbf{Results.} 
Inspired by the works of  \cite{LP18} and  \cite{Y18W}, we introduce the notion of \emph{$\mathcal{A}$-Eulerian polynomial} $E_\Psi(t)$ - an arrangement theoretical generalization of the classical Eulerian polynomial, and the notion of \emph{compatible subsets} of $\Phi^+$ w.r.t. the \emph{Worpitzky partition}. 
The first main result in our paper is a formula of $\chi_{\Psi}^{\mathrm{quasi}}(q)$ in terms of $E_\Psi(t)$, shift operator, and ${\rm L}_{A^\circ}(q)$ when  $\Psi\subseteq \Phi^+$ is compatible. 
 The formula specializes correctly to the two formulas in the extreme cases ($\Psi=\Phi^+$ and $\Psi=\emptyset$) mentioned above. 
In addition, we prove that the formula characterizes the compatibility.
The second main result in our paper is that the class of compatible subsets contains the ideals of the root system.
 Using the similar method, we further define two types of deformed Weyl subarrangements containing the families of the extended Shi, Catalan, Linial arrangements and compute their characteristic quasi-polynomials. 

 \textbf{Organization of the paper}. 
The remainder of the paper is organized as follows. 
In Section \ref{sec:Worpitzky}, we recall definitions and basic facts of irreducible root systems, their (affine) Weyl groups and the Worpitzky partition.
In Section \ref{sec:Characteristic-Ehrhart}, we recall the definitions of the characteristic and Ehrhart quasi-polynomials and specify the choices of lattices for these quasi-polynomials (Remarks \ref{rem:cha-lattice} and \ref{rem:E-lattice}). 
We also recall the formula between the quasi-polynomials in the extreme case $\Psi=\Phi^+$
(Theorem \ref{thm:quasi-Ehrhart}), and derive a more general formula (Proposition \ref{prop:bijection}).
In Section \ref{sec:generating}, we introduce the notion of $\mathcal{A}$-Eulerian polynomial (Definition \ref{def:Eulerian}), of which the main specialization is the Lam-Postnikov  Eulerian polynomial (Remark \ref{rem:empty-full1}). 
We then define the notion of compatible sets (Definition \ref{def:compatibleW}) which interpolates between the two extreme cases $\Psi=\Phi^+$ and $\Psi=\emptyset$.
We prove that the $\mathcal{A}$-Eulerian polynomial is an essential tool to compute $\chi_{\Psi}^{\mathrm{quasi}}(q)$ and its generating function for any compatible subset $\Psi\subseteq \Phi^+$ (Theorems \ref{thm:shift} and  \ref{thm:Eulerian}). 
We also prove that every ideal is compatible (Theorem  \ref{thm:ideal-com}).
In Section \ref{sec:deform},
we define two types of the deformed Weyl subarrangements (Definition \ref{def:types}), of which the main examples are the truncated affine Weyl and deleted Shi arrangements (Remark \ref{rem:deform}). 
Then we compute the characteristic quasi-polynomials of these deformed arrangements according to two special choices of intervals (Theorems \ref{thm:CQP-I} and \ref{thm:CQP-II}).

\section{Root systems and Worpitzky partition}
\label{sec:Worpitzky} 
Our standard reference for root systems and their Weyl groups is \cite{H90}.
Assume that $V=\R^\ell$ with the standard inner product $(\cdot,\cdot)$.
 Let $\Phi$ be an irreducible (crystallographic) root system in $V$ with 
the Coxeter number ${\rm h}$ and the Weyl group $W$. 
Fix a positive system $\Phi^+ \subseteq \Phi$ and let $\Delta := \{\alpha_1,\ldots,\alpha_\ell \}$ be the set of simple roots (base) of $\Phi$ associated with $\Phi^+$. 
For $\alpha = \sum_{i=1}^\ell d_i \alpha_i\in \Phi^+$, the \textit{height} of $\alpha$ is defined by $ {\rm ht}(\alpha) :=\sum_{i=1}^\ell  d_i$. 
Define the partial order $\ge$ on $\Phi^+$ such that $\beta_1 \ge \beta_2$ if and only if $\beta_1-\beta_2 = \sum_{i=1}^\ell n_i\alpha_i$ with all $n_i \in \Z_{\ge 0}$. 
The highest root (w.r.t. $\ge$), denoted by $\tilde{\alpha} \in \Phi^+$, can be written uniquely as a linear combination of the simple roots $\tilde{\alpha}= \sum_{i=1}^\ell c_i\alpha_i$  ($c_i \in  \Z_{>0}$). 
Set $\alpha_0 := -\tilde{\alpha}$, $c_0 := 1$, and $\widetilde{\Delta}:=\Delta \cup \{\alpha_0\}$. 
Then we have the linear relation
\begin{equation*}
\label{eq:relation}
c_0\alpha_0 +c_1\alpha_1+\cdots+c_\ell\alpha_\ell  =0.
\end{equation*}
The coefficients $c_i$ are important in our study and will appear frequently throughout the paper.

For $\alpha \in V\setminus\{0\}$, denote $\alpha^\vee:=\frac{2 \alpha}{(\alpha,\alpha)}$.
The \emph{root lattice} $Q(\Phi)$, \emph{coroot lattice} $Q(\Phi^\vee)$, \emph{weight lattice} $Z(\Phi)$, and  \emph{coweight lattice} $Z(\Phi^\vee)$ are defined as follows:
\begin{align*}
Q(\Phi) &:= \bigoplus_{i=1}^\ell \Z\alpha_i, \\
Q(\Phi^\vee) &:= \bigoplus_{i=1}^\ell \Z\alpha_i^\vee,\\
Z(\Phi) &:= \{x\in V \mid(\alpha_i^\vee ,x)\in \Z\,(1 \le i \le \ell)\}, \\
Z(\Phi^\vee) &:= \{x\in V \mid(\alpha_i ,x)\in \Z\,(1 \le i \le \ell)\}.
\end{align*}
Then $Q(\Phi)$ is a subgroup of $Z(\Phi)$ of finite index $f$, and similarly $Q(\Phi^\vee)$ is a subgroup of $Z(\Phi^\vee)$ of the index $f$. 
The number $f$ is called the \emph{index of connection}.  
Let $\{\varpi^\vee_1, \ldots ,\varpi^\vee_\ell\} \subseteq Z(\Phi^\vee)$ be the dual basis of the base $\Delta$, namely, $(\alpha_i ,\varpi^\vee_j) = \delta_{ij}$. 
Then $Z(\Phi^\vee)= \bigoplus_{i=1}^\ell \Z\varpi_i^\vee$ and $c_i = (\varpi^\vee_i , \tilde{\alpha} )$.

For $m \in \Z$ and $\alpha \in  \Phi$, the \emph{affine hyperplane} $H_{\alpha,m}$ is defined by
$$H_{\alpha,m} :=\{x\in V \mid(\alpha,x)=m\}.$$
A connected component of $V \setminus \bigcup_{ \alpha\in \Phi^+,m \in \Z} H_{\alpha,m}$ is called an \emph{alcove}. 
The \emph{fundamental alcove} $A^\circ$ is defined by 
$$A^\circ := \left\{x \in V \middle|
\begin{array}{c}
      (\alpha_i ,x)> 0\,(1 \le i \le \ell),  \\
      (\alpha_0,x)>-1
    \end{array}
\right\}.
$$
The closure $\overline{A^\circ} =\{x\in V \mid(\alpha_i ,x)\ge 0\,(1 \le i \le \ell), (\alpha_0,x)\ge -1\}$ is a simplex, which is the convex hull of $\{0,\varpi^\vee_1/c_1 ,\ldots,\varpi^\vee_\ell/c_\ell\}$. 
The supporting hyperplanes of the facets of $\overline{A^\circ}$ are $H_{\alpha_1,0}, \ldots , H_{\alpha_\ell ,0}, H_{\alpha_0, -1}$. 
The affine Weyl group $W_{\rm aff} := W \ltimes Q(\Phi^\vee)$ acts simply transitively on the set of alcoves and admits $\overline{A^\circ}$ as a fundamental domain for its action on $V$.

The fundamental domain $P^\diamondsuit$ of the coweight lattice $Z(\Phi^\vee)$, called the \emph{fundamental parallelepiped}, is defined by
\begin{align*}
P^\diamondsuit  &:= \sum_{i=1}^\ell (0,1] \varpi^\vee_i\\
 &=  \{x\in V \mid 0<(\alpha_i ,x)\le 1\,(1 \le i \le \ell)\}.
\end{align*}
Let $N:=\frac{\#W}{f}$, and denote $[N]:=\{1,2,\ldots,N\}$. 
Then the  cardinality of the set $\Xi$ of all alcoves contained in $P^\diamondsuit$ equals $N$ (see, e.g., \cite[Theorem 4.9]{H90}). 
Let us write 
$$\Xi=\{A_i^\circ \subseteq P^\diamondsuit \mid i \in [N]\},$$  
where each alcove $A_i^\circ$ is written uniquely as
$$A_i^\circ= \left\{x \in V \middle|
\begin{array}{c}
      (\alpha ,x)> k_\alpha \,(\alpha \in I),  \\
     (\beta ,x)< k_\beta \,(\beta \in J)
    \end{array}
\right\},
$$
where $k_\alpha=k_\alpha(A_i^\circ) \in \Z_{\ge0}$, $k_\beta=k_\beta(A_i^\circ)  \in \Z_{>0}$ and the sets $I,J \subseteq \Phi^+$ with $\#(I \sqcup J)=\ell+1$ indicate the constraints on $x\in V$ according to the inequality symbols $>$, $<$, respectively.

 \begin{definition}\label{def:partial}   
For each $A_i^\circ \in\Xi$, the \emph{partial closure} $A_i^\diamondsuit$ of $A_i^\circ$ is defined by
 $$
\label{eq:A-diamond}
A_i^\diamondsuit
:= \left\{x \in V \middle|
\begin{array}{c}
      (\alpha ,x)> k_\alpha \,(\alpha \in I),  \\
     (\beta ,x)\le k_\beta \,(\beta \in J)
    \end{array}
\right\}.
 $$

\end{definition}

\begin{theorem}[Worpitzky partition]
\label{thm:Worpitzky}
$$P^\diamondsuit=\bigsqcup_{i \in [N]}A_i^\diamondsuit.$$
\end{theorem}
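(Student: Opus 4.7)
The plan is to prove both containments by a single perturbation argument: from any point $x \in P^\diamondsuit$, I push $x$ infinitesimally in a carefully chosen generic direction so that it lands in a unique open alcove $A_i^\circ \subseteq P^\diamondsuit$, and then check that $x$ lies in the partial closure of this particular alcove and no other. The right choice of direction is a strictly antidominant vector $v \in V$, for instance $v := -\sum_{i=1}^\ell \varpi_i^\vee$, which satisfies $(\alpha, v) < 0$ for every $\alpha \in \Phi^+$.

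For the covering direction, I would fix $x \in P^\diamondsuit$ and consider $x + \epsilon v$ for all sufficiently small $\epsilon > 0$. The condition $(\alpha_i, v) < 0$ immediately implies $x + \epsilon v \in P^\diamondsuit$ once $\epsilon$ is small, because the coordinates $(\alpha_i, \cdot)$ decrease but stay strictly positive. Since the affine Weyl hyperplanes cut $V$ into open alcoves, for generic small $\epsilon$ the point $x + \epsilon v$ lands in a fixed open alcove $A_i^\circ$, which is automatically contained in $P^\diamondsuit$ because the bounding hyperplanes of $P^\diamondsuit$ are among the affine Weyl hyperplanes. I would then verify $x \in A_i^\diamondsuit$ by checking the two types of constraints separately: for $\alpha \in I$, the strict inequality $(\alpha, x + \epsilon v) > k_\alpha$ combined with $(\alpha, v) < 0$ yields $(\alpha, x) > k_\alpha + \epsilon\lvert(\alpha, v)\rvert > k_\alpha$, preserving strictness; for $\beta \in J$, the inequality $(\beta, x + \epsilon v) < k_\beta$ holds for all sufficiently small $\epsilon > 0$, and passing $\epsilon \to 0^+$ gives $(\beta, x) \le k_\beta$, as required.

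For disjointness, I would argue that if $x \in A_i^\diamondsuit \cap A_j^\diamondsuit$ with $i \neq j$, then the distinct open alcoves $A_i^\circ$ and $A_j^\circ$ sit in different chambers of the local hyperplane arrangement at $x$, so some affine hyperplane $H_{\alpha, k}$ passes through $x$ with $A_i^\circ$ and $A_j^\circ$ on opposite sides. The alcove lying on the $(\alpha, y) > k$ side has $\alpha$ in its set $I$, and membership in its partial closure demands the strict inequality $(\alpha, x) > k$, contradicting $x \in H_{\alpha, k}$.

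The main obstacle I expect is the bookkeeping around the asymmetric definition of $A_i^\diamondsuit$: the convention that the $I$-constraints remain strict while the $J$-constraints are relaxed to $\le$ must be perfectly matched by the perturbation direction. The antidominant $v$ works precisely because the sign convention in Definition \ref{def:partial} is designed so that moving in direction $v$ flips every relevant hyperplane from the $>$ side to the $<$ side; no other class of directions would make both the strict and the non-strict conditions of $A_i^\diamondsuit$ hold simultaneously at the original $x$. A subsidiary but quick check is that a \emph{single} vector $v$ serves uniformly for all $x \in P^\diamondsuit$, which is immediate since the conditions $(\alpha, v) < 0$ depend only on $\alpha \in \Phi^+$ and not on $x$.
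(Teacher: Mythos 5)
Your overall strategy---perturb by a fixed strictly antidominant vector $v$ and compare with the open alcoves---is the standard one, and the paper itself offers no internal proof to compare against (it only cites \cite[Proposition 2.5]{Y18W} and \cite[Exercise 4.3]{H90}). Your covering half is correct as written. But your disjointness argument contains a genuinely false step: you claim that if $H_{\alpha,k}$ passes through $x$ and separates $A_i^\circ$ from $A_j^\circ$, then the alcove on the $(\alpha,y)>k$ side has $\alpha$ in its set $I$ with $k_\alpha=k$. By Definition \ref{def:partial}, the sets $I,J$ index only the $\ell+1$ \emph{facet-supporting} hyperplanes of the alcove, whereas a separating hyperplane through $x$ may meet the two closures only in a face of codimension at least $2$. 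Concretely, in type $A_2$ the hyperplane $H_{\tilde{\alpha},0}$ separates the fundamental alcove $A^\circ$ from the opposite alcove $-A^\circ$, passing through their common vertex $0$, yet it supports a facet of \emph{neither}; for such a choice of separating hyperplane, membership in the partial closure imposes no strict inequality along $H_{\tilde{\alpha},0}$, and your contradiction does not materialize. The repair stays entirely inside your own toolkit: prove the converse of your perturbation step, namely that $x \in A_i^\diamondsuit$ implies $x+\epsilon v \in A_i^\circ$ for all sufficiently small $\epsilon>0$ (the finitely many strict $I$-inequalities persist under small perturbation, while $(\beta,x)\le k_\beta$ together with $(\beta,v)<0$ makes every $J$-inequality strict). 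Then $x \in A_i^\diamondsuit \cap A_j^\diamondsuit$ forces $x+\epsilon v \in A_i^\circ \cap A_j^\circ$ for small $\epsilon$, contradicting disjointness of distinct open alcoves---no separating hyperplane is needed at all.

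A second, smaller omission: you never verify $\bigsqcup_{i\in[N]} A_i^\diamondsuit \subseteq P^\diamondsuit$. Your two halves give only $P^\diamondsuit \subseteq \bigcup_i A_i^\diamondsuit$ plus disjointness, but the asserted equality also requires each partial closure to stay inside the half-open region $0<(\alpha_j,x)\le 1$; a priori, relaxing the $J$-inequalities to $\le$ could add points with $(\alpha_j,x)=0$, and ruling this out is precisely where the asymmetry of $P^\diamondsuit$ must match the asymmetry of $A_i^\diamondsuit$. The same converse perturbation closes this gap: for $x \in A_i^\diamondsuit$ one has $x+\epsilon v \in A_i^\circ \subseteq P^\diamondsuit$, and since $A_i^\circ$ avoids the hyperplanes $H_{\alpha_j,0}$ and $H_{\alpha_j,1}$, in fact $0<(\alpha_j,x+\epsilon v)<1$ for each simple root $\alpha_j$; now $(\alpha_j,v)<0$ gives $(\alpha_j,x) > (\alpha_j,x+\epsilon v) > 0$, while letting $\epsilon \to 0^+$ gives $(\alpha_j,x)\le 1$. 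With these two additions the proof is complete and, pleasantly, the single lemma ``$x \in A_i^\diamondsuit$ if and only if $x+\epsilon v \in A_i^\circ$ for all sufficiently small $\epsilon>0$'' yields all three required facts (covering, disjointness, and containment) at once.
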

 \begin{proof} 
See, e.g., \cite[Proposition 2.5]{Y18W}, \cite[Exercise 4.3]{H90}.
\end{proof}

\section{Characteristic and Ehrhart quasi-polynomials}
\label{sec:Characteristic-Ehrhart}
A function $g: \Z \to \C$ is called a \emph{quasi-polynomial} if there exist $\rho\in\Z_{>0}$ and polynomials $f^k(t)\in\Z[t]$ ($1 \le k \le \rho$) such that for any $q\in\Z_{>0}$ with  $q\equiv k\bmod \rho$, 
\begin{equation*}
g(q) =f^k(q).
\end{equation*}
The number $\rho$ is called a \emph{period} and the polynomial $f^k(t)$ is called the \emph{$k$-constituent} of the quasi-polynomial $g$. 

Let $\Gamma:=\bigoplus_{i=1}^\ell \Z\beta_i \simeq \Z^\ell$ be a lattice with a basis $\{\beta_1,\ldots, \beta_\ell\}$.
Let $\scL$ be a finite list (multiset) of elements in $\Gamma$. 
Let $q\in\Z_{>0}$ and denote $\Z_q:=\Z/q\Z$.
For $\alpha=\sum_{i=1}^\ell  a_i\beta_i \in \scL$ and $m_\alpha \in \Z$, define a subset $H_{\alpha, m_\alpha,\Z_q}$ of $\Z_q^\ell$ by 
$$H_{\alpha,m_\alpha, \Z_q}:=\Big\{\textbf{z} \in  \Z_q^\ell  \mid \sum_{i=1}^\ell \overline{a_i}z_i\equiv\overline{m_\alpha }\Big\}.$$
Given a vector $m=(m_\alpha)_{ \alpha  \in \scL}\in\Z^\scL$,  we can define the \emph{$\Z_q$-plexification (or $q$-reduced)} arrangement of $(\scL,m)$ by
$$(\scL,m)(\Bbb \Z_q):=\{H_{\alpha,m_\alpha, \Z_q}\mid \alpha  \in \scL\}.$$
The \emph{complement} of  $(\scL,m)(\Bbb \Z_q)$ is defined by
$$\M((\scL,m);\Gamma,\Z_q):= \Z_q^\ell \setminus\bigcup_{\alpha\in\scL}H_{\alpha,m_\alpha, \Z_q}.$$
For a sublist $\scS\subseteq \scL$, write $d_{\scS, n_{\scS}}$ for the \emph{largest invariant factor} of the torsion subgroup of the finitely generated abelian group $\Gamma/\langle\scS\rangle$ (see, e.g., \cite[Definition 3.8]{N12}).
The \emph{LCM-period} $\rho_{\scL}$ of $\scL$ is defined by 
\begin{equation*}
\label{eq:LCM-period}
\rho_\scL:=\lcm(d_{\scS, n_{\scS}}\mid \scS\subseteq \scL). 
\end{equation*} 
\begin{theorem}
\label{thm:KTT}
$\#\M((\scL,m);\Gamma,\Z_q)$ is a monic quasi-polynomial in $q$ for which $\rho_{\scL}$ is a period. 
The quasi-polynomial is called the \emph{characteristic quasi-polynomial} of $(\scL,m)$ w.r.t. the lattice $\Gamma$, and denoted by $\chi^{\quasi}_{(\scL,m)}(q)$. 
\end{theorem}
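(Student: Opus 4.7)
The plan is to combine inclusion--exclusion on the Boolean lattice of sublists of $\scL$ with a Smith-normal-form analysis for each resulting system of congruences.

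First I would apply inclusion--exclusion to rewrite
\[
\#\M((\scL,m);\Gamma,\Z_q)=\sum_{\scS\subseteq\scL}(-1)^{|\scS|}\cdot\#\Big(\bigcap_{\alpha\in\scS}H_{\alpha,m_\alpha,\Z_q}\Big),
\]
which reduces the problem to analyzing, for each sublist $\scS$, a single system of linear congruences modulo $q$.

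Second, for $\scS=\{\alpha^{(1)},\dots,\alpha^{(k)}\}$ I would record the coordinates of the $\alpha^{(j)}$ in the basis $\{\beta_i\}$ as an $\ell\times k$ integer matrix $N_\scS$, so that the intersection in question is the solution set in $\Z_q^\ell$ of $N_\scS^{T}\textbf{z}\equiv(m_{\alpha^{(j)}})_j\pmod{q}$. Let $d_1\mid d_2\mid\cdots\mid d_{n_\scS}$ be the invariant factors of $N_\scS$, so that $\Gamma/\langle\scS\rangle\cong\Z^{\ell-n_\scS}\oplus\bigoplus_{i=1}^{n_\scS}\Z/d_i\Z$ and in particular $d_{n_\scS}=d_{\scS,n_\scS}$ in the paper's notation. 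Replacing $N_\scS^{T}$ by its Smith normal form via unimodular changes of basis on source and target decouples the system into $n_\scS$ independent one-variable congruences $d_i z'_i\equiv m'_i\pmod{q}$ together with $k-n_\scS$ consistency conditions of the form $0\equiv m'_i\pmod{q}$. A direct count then shows the intersection has cardinality either $0$ or
\[
q^{\ell-n_\scS}\prod_{i=1}^{n_\scS}\gcd(d_i,q),
\]
and, crucially, both which of these two cases occurs and the value in the nonzero case are determined by $q$ only through its residue modulo $d_{\scS,n_\scS}$.

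Third, since each factor $\gcd(d_i,q)$ is a periodic function of $q$ with period $d_i\mid d_{\scS,n_\scS}$, the contribution of each sublist $\scS$ is a quasi-polynomial in $q$ of degree at most $\ell-n_\scS$ with period dividing $d_{\scS,n_\scS}$. Summing these contributions over all $\scS\subseteq\scL$ yields a quasi-polynomial of degree at most $\ell$ with period dividing $\lcm(d_{\scS,n_\scS}\mid\scS\subseteq\scL)=\rho_\scL$. Monicity follows because only $\scS=\emptyset$ contributes in degree $\ell$, giving the leading term $q^\ell$, while every nonempty $\scS$ has $n_\scS\ge 1$ and hence yields a term of strictly smaller degree.

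The main technical obstacle is the verification in the second step that both the consistency dichotomy and the explicit product formula depend on $q$ only through a residue class modulo $d_{\scS,n_\scS}$, so that the inconsistent case cannot spoil quasi-polynomiality. This rests on the elementary observation that the one-variable congruence $dz\equiv c\pmod{q}$ is solvable precisely when $\gcd(d,q)\mid c$, a condition periodic in $q$ with period $d$, and that in the solvable case the solution count equals $\gcd(d,q)$. Assembling these pieces with the divisibility chain $d_i\mid d_{n_\scS}$ then delivers the claimed period $\rho_\scL$.
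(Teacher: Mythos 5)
Your overall route is the right one, and in fact it is essentially the argument of the sources the paper cites for this theorem (the paper itself gives no proof beyond referring to \cite{KTT08} for the central case and \cite{KTT11} for general $m$): inclusion--exclusion over sublists $\scS\subseteq\scL$, Smith normal form to decouple each congruence system, and the count $q^{\ell-n_\scS}\prod_{i=1}^{n_\scS}\gcd(d_i,q)$ with periodicity governed by $d_i\mid d_{\scS,n_\scS}$. For the central case $m=0$ your proof is complete as written: the consistency rows read $0\equiv 0$ and drop out, each factor $\gcd(d_i,q)$ depends only on $q\bmod d_i$, and monicity follows from the $\scS=\emptyset$ term (modulo the harmless standing assumption $0\notin\scL$, which you use implicitly to get $n_\scS\ge 1$ for $\scS\ne\emptyset$).

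There is, however, a genuine gap exactly at the point you flagged as the main technical obstacle, and your proposed resolution does not close it. The elementary lemma you invoke --- $dz\equiv c\pmod q$ is solvable iff $\gcd(d,q)\mid c$, a condition periodic in $q$ with period $d$ --- is correct only for the rows with $d=d_i\ge 1$. The $k-n_\scS$ zero rows produce conditions $m'_i\equiv 0\pmod q$, i.e., $q\mid m'_i$; for a fixed nonzero integer $m'_i$ this holds for only finitely many $q$ and is not periodic with any finite period, so your claim that the consistency dichotomy is ``determined by $q$ only through its residue modulo $d_{\scS,n_\scS}$'' is false. A concrete failure: take $\ell=1$, $\scL=(\beta_1,\beta_1)$ with $m=(0,2)$; inclusion--exclusion gives $\#\M((\scL,m);\Z,\Z_q)=q-2$ for $q\ge 3$ but the value $1$ at $q=2$ and $0$ at $q=1$, which is not a quasi-polynomial under the paper's all-$q$ definition. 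The correct repair, and the one used in \cite{KTT11}, is to prove the statement for all sufficiently large $q$: once $q$ exceeds the maximum of $|m'_i|$ over all sublists and all zero rows, every such degenerate condition fails stably, the corresponding intersections are empty, and the surviving contributions are honestly periodic with period dividing $\rho_\scL$. With that restriction (or with the restriction $m=0$, where the issue vanishes), your argument goes through; without it, step two as written proves a false statement.
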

 \begin{proof} 
See  \cite[Theorem 2.4]{KTT08} and  \cite[Theorem 3.1]{KTT11}.
\end{proof}

We can also define the \emph{$\R$-plexification} (in fact, the real hyperplane arrangement) of $(\scL,m)$ as follows:
$$(\scL,m)(\R):=\{H_{\alpha,m_\alpha, \Bbb \R} \mid \alpha  \in \scL\},$$ 
where $H_{\alpha,m_\alpha, \R}:=\left\{\textbf{x} \in  \R^\ell  \mid  \sum_{i=1}^\ell a_ix_i=m_\alpha\right\}$. 
For a real hyperplane arrangement $\A$, denote by $\chi_\A( t)$ the characteristic polynomial (e.g., \cite[Definition 2.52]{OT92}) of $\A$.
\begin{theorem}
\label{thm:KTT11}
The first constituent of $\chi^{\quasi}_{(\scL,m)}(q)$ coincides with the characteristic polynomial of $(\scL,m)(\R)$, i.e.,
$$f^1_{(\scL,m)}(t)=\chi_{(\scL,m)(\R)}(t).$$
\end{theorem}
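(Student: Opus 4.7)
The plan is to expand the complement count by inclusion--exclusion, evaluate each intersection count via the Smith normal form, and then observe that on the arithmetic progression $q\equiv 1\pmod{\rho_{\scL}}$ the resulting expression collapses to the Whitney rank formula for $\chi_{(\scL,m)(\R)}(q)$.

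First I would apply inclusion--exclusion to the union $\bigcup_{\alpha\in\scL}H_{\alpha,m_\alpha,\Z_q}$ to obtain
\[
\#\M((\scL,m);\Gamma,\Z_q) \;=\; \sum_{\scS\subseteq \scL}(-1)^{|\scS|}\,\#\bigcap_{\alpha\in \scS}H_{\alpha,m_\alpha,\Z_q}.
\]
Using the Smith normal form of the coefficient matrix whose rows are the expansions of the elements of $\scS$ in the basis $\beta_1,\ldots,\beta_\ell$ of $\Gamma$, one computes that the intersection count equals $q^{\ell-\rank(\scS)}\prod_{i=1}^{\rank(\scS)} \gcd(d_i,q)$ when the associated affine system is consistent modulo $q$, and $0$ otherwise; here $d_1\mid \cdots \mid d_{\rank(\scS)}$ are the nonzero elementary divisors of that matrix, equivalently the invariant factors of the torsion part of $\Gamma/\langle \scS\rangle$ together with possible $1$'s.

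The key reduction is arithmetic. Since $\rho_\scL$ is divisible by the largest invariant factor $d_{\scS,n_\scS}$ for every sublist $\scS\subseteq \scL$, the hypothesis $q\equiv 1\pmod{\rho_\scL}$ forces $\gcd(d_i,q)=1$ for all contributing pairs $(\scS,i)$, and it also implies that the mod-$q$ affine system $\{(\alpha,z)\equiv m_\alpha : \alpha\in \scS\}$ is consistent if and only if the real intersection $\bigcap_{\alpha\in\scS}H_{\alpha,m_\alpha,\R}$ is nonempty: the obstruction to lifting a mod-$q$ solution to a $\Q$-solution lies in a finite quotient annihilated by $d_{\scS,n_\scS}$, hence is killed by $q-1$. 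I expect this equivalence between modular and real consistency to be the main technical obstacle; its detailed verification is the substance of \cite[Thm.~2.4]{KTT08} and \cite[Thm.~3.1]{KTT11}.

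Putting the pieces together, for every $q\in\Z_{>0}$ with $q\equiv 1\pmod{\rho_\scL}$ we arrive at
\[
\#\M((\scL,m);\Gamma,\Z_q) \;=\; \sum_{\substack{\scS\subseteq \scL \\ \bigcap_{\alpha\in\scS}H_{\alpha,m_\alpha,\R}\neq \emptyset}} (-1)^{|\scS|}\,q^{\ell-\rank(\scS)},
\]
which is precisely the Whitney rank formula for $\chi_{(\scL,m)(\R)}(q)$. Since $f^1_{(\scL,m)}(t)$ and $\chi_{(\scL,m)(\R)}(t)$ are both polynomials in $t$ and agree on the infinite set $\{q\in\Z_{>0}:q\equiv 1\pmod{\rho_\scL}\}$, they coincide as polynomials, proving the claim.
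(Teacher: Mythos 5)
Your overall architecture --- inclusion--exclusion over sublists, the Smith normal form count $q^{\ell-\rank(\scS)}\prod_i\gcd(d_i,q)$, specialization to the class $q\equiv 1\pmod{\rho_{\scL}}$, and agreement of two polynomials on an infinite set --- is exactly the standard argument behind the cited results (the paper itself gives no proof, only pointers to KTT08/KTT11). But the arithmetic lemma you lean on is false as stated: it is not true that $q\equiv 1\pmod{\rho_{\scL}}$ forces mod-$q$ consistency to coincide with real consistency, and your proposed mechanism (``the obstruction lies in a finite quotient annihilated by $d_{\scS,n_{\scS}}$, hence is killed by $q-1$'') is the wrong one. Concretely, take $\ell=1$, $\Gamma=\Z\beta_1$, $\scL=(\beta_1,\beta_1)$ (a list with repetition), $m=(0,3)$. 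Every sublist generates $0$ or $\Z\beta_1$, so every quotient $\Gamma/\langle\scS\rangle$ is torsion-free, $d_{\scS,n_{\scS}}=1$, and $\rho_{\scL}=1$; thus \emph{every} $q$ lies in the class $q\equiv 1$. Yet at $q=3$ the system $z\equiv 0,\ z\equiv 3 \pmod 3$ is consistent while the real intersection $\{x=0\}\cap\{x=3\}$ is empty. The point is that inconsistency of an inhomogeneous system is an \emph{affine} phenomenon: after SNF the obstruction is the vanishing mod $q$ of the fixed integers $(Um)_i$ in the rows beyond $\rank(\scS)$ (rank of the augmented versus unaugmented matrix), and these can be nonzero even when $\Gamma/\langle\scS\rangle$ has no torsion at all. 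Congruence conditions on $q$ modulo $\rho_{\scL}$ cannot see them.

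The repair is cheap and leaves your structure intact: for each of the finitely many sublists $\scS$, real (equivalently rational) inconsistency means some $(Um)_i\neq 0$, so once $q$ exceeds $\max|(Um)_i|$ over all sublists and all such rows, mod-$q$ consistency coincides with real consistency; the conditions $\gcd(d_i,q)\mid (Um)_i$ for $i\le\rank(\scS)$ are already handled by $\gcd(d_i,q)=1$ on your residue class. Restricting to \emph{sufficiently large} $q\equiv 1\pmod{\rho_{\scL}}$ --- still an infinite set --- your Whitney-formula identity
\[
\#\M((\scL,m);\Gamma,\Z_q)\;=\;\sum_{\substack{\scS\subseteq\scL\\ \bigcap_{\alpha\in\scS}H_{\alpha,m_\alpha,\R}\neq\emptyset}}(-1)^{|\scS|}q^{\ell-\rank(\scS)}
\]
holds, and since the constituent $f^1_{(\scL,m)}$ of the quasi-polynomial from Theorem \ref{thm:KTT} is a polynomial determined by its values at large $q$ in its class, the identity $f^1_{(\scL,m)}(t)=\chi_{(\scL,m)(\R)}(t)$ follows. (This large-$q$ caveat is not cosmetic: in the non-central setting the counting function can deviate from its constituents at small $q$, as the example above already shows, which is precisely why the non-central case needs the separate treatment in KTT11 rather than a verbatim repeat of the central argument.)
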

 \begin{proof} 
See, e.g., \cite[Theorem 2.5]{KTT08} and  \cite[Remark 3.3]{KTT11}.
\end{proof}

\noindent
\emph{Convention}: 
When $m=(0)$ the zero vector, we simply write $\scL$, $H_{\alpha, \Z_q}$,  $H_{\alpha, \R}$ instead of $(\scL,m)$, $H_{\alpha,0, \Z_q}$,  $H_{\alpha, 0,\R}$, respectively.

\begin{remark}
\label{rem:G-}
$\scL(\Bbb \Z_q)$ and $\scL(\R)$ are examples of a more abstract concept, the \emph{$G$-plexifications} ($G$ is an abelian group) introduced in \cite{LTY} by viewing $G=\Z_q, \R$, respectively (see also \cite{TY19} for more information on their combinatorial properties).
In addition, the characteristic quasi-polynomial can be generalized to \emph{chromatic quasi-polynomial} by replacing the lattice $\Gamma$ by a finitely generated abelian group, see, e.g., \cite{Tan18} for more details. 
\end{remark}

\begin{remark}
\label{rem:cha-lattice}
Throughout the paper, for every $\Psi\subseteq\Phi^+(\subseteq Q(\Phi))$, the characteristic quasi-polynomial $\chi^{\quasi}_{\Psi}(q)$ is always defined w.r.t. the root lattice $\Gamma=Q(\Phi)$.
\end{remark}

For each $\Psi\subseteq\Phi^+$, define the \emph{Weyl arrangement} of $\Psi$ by $\A_{\Psi}:= \{H_\alpha \mid \alpha\in\Psi\}$, where $H_\alpha=\{x\in V \mid (\alpha,x)=0\}$ is the hyperplane orthogonal to $\alpha$. 
It is not hard to see that $H_\alpha\simeq H_{\alpha, \R}$ (as vector spaces), thus we can view $\A_{\Psi}$ as the $\R$-plexification of $\Psi$, i.e., $\A_{\Psi}=\Psi(\R)$. 
In standard terminology, $\A_{\Phi^+}$ is known with the name \emph{Weyl (or Coxeter) arrangement}, and clearly $\A_{\Psi}$ is a  (Weyl) subarrangement of $\A_{\Phi^+}$.

\begin{remark}
\label{rem:another}
Sometimes, when we say ``the" characteristic quasi-polynomial of $(\scL,m)(\Bbb \Z_q)$ or of $(\scL,m)(\R)$, we mean the characteristic quasi-polynomial $\chi^{\quasi}_{(\scL,m)}(q)$ of $(\scL,m)$. 
For example, the characteristic quasi-polynomial $\chi^{\quasi}_{\A_{\Psi}}(q)$ of $\A_{\Psi}$  is referred to as $\chi^{\quasi}_{\Psi}(q)$. 
We will use this term later in Section \ref{sec:deform} when we deal with deformed Weyl arrangements of $\Psi$.
\end{remark}

Let $\Gamma$ be a lattice. 
For a polytope $\scP$ with vertices in the rational vector space generated by $\Gamma$,  the \emph{Ehrhart quasi-polynomial} ${\rm L}_{\scP}(q)$ of $\scP$ w.r.t. $\Gamma$ is defined by 
$${\rm L}_{\scP}(q):=\#(q\scP \cap \Gamma).$$
We denote by $\scP^\circ$ the relative interior of $\scP$. Similarly, we can define
$${\rm L}_{\scP^\circ}(q):=\#(q\scP^\circ \cap \Gamma).$$
For $q > 0$, the following reciprocity law holds:
$${\rm L}_{\scP}(-q) = (-1)^{\dim \scP}{\rm L}_{\scP^\circ}(q).$$

\begin{remark}
\label{rem:E-lattice}
Throughout the paper, the Ehrhart quasi-polynomials ${\rm L}_{\overline{A^\circ}}(q)$, ${\rm L}_{A^\circ}(q)$ are defined w.r.t. the coweight lattice $\Gamma=Z(\Phi^\vee)$.
\end{remark}

Let  $F_0 :=  H_{\alpha_0,-q}, \, F_i := H_{\alpha_i,0}\,\,(1\le i \le \ell)$ denote the supporting hyperplanes of the facets of $q \overline{A^\circ}$. Then the number of coweight lattice points in $q \overline{A^\circ}$ after removing some facets can be computed by ${\rm L}_{\overline{A^\circ}}$ with the scale factor of dilation being reduced. 
\begin{proposition}
\label{prop:remove}   
Let $\{i_1, \ldots, i_k\} \subseteq \{0,1, \ldots, \ell\}$. Suppose that $q > c_{i_1} + \cdots + c_{i_k}$. 
Then
$$\#(q \overline{A^\circ} \cap Z(\Phi^\vee)\setminus (F_{i_1} \cup \cdots \cup F_{i_k}) ) = {\rm L}_{\overline{A^\circ}}(q-(c_{i_1} + \cdots + c_{i_k})).$$
In particular, for $q\in\Z$, 
$${\rm L}_{A^\circ}(q)={\rm L}_{\overline{A^\circ}}(q-\h).$$
\end{proposition}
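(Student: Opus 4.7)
The plan is to convert the removal of the facets into a simple shift in the lattice and reduce to counting coweight points in a smaller dilate of $\overline{A^\circ}$. Write $q\overline{A^\circ}\cap Z(\Phi^\vee)$ as the set of $x\in Z(\Phi^\vee)$ satisfying $(\alpha_i,x)\ge 0$ for $1\le i\le\ell$ and $(\tilde\alpha,x)\le q$. Since $x\in Z(\Phi^\vee)$ forces $(\alpha_i,x)\in\Z$ and $(\tilde\alpha,x)\in\Z$, removing facet $F_i$ with $i\ge 1$ amounts to tightening $(\alpha_i,x)\ge 0$ to $(\alpha_i,x)\ge 1$, and removing $F_0$ amounts to tightening $(\tilde\alpha,x)\le q$ to $(\tilde\alpha,x)\le q-1$. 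So the left-hand side counts lattice points in the translated simplex cut out by these inequalities.

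Next, set $S:=\{i_1,\dots,i_k\}$, $S':=S\cap\{1,\dots,\ell\}$, and define the shift vector
$$v:=\sum_{i\in S'}\varpi^\vee_i\in Z(\Phi^\vee).$$
Using $(\alpha_j,\varpi^\vee_i)=\delta_{ij}$ and $c_i=(\varpi^\vee_i,\tilde\alpha)$, a direct computation shows that $y:=x-v$ satisfies $(\alpha_i,y)\ge 0$ for all $1\le i\le\ell$ and $(\tilde\alpha,y)\le q-\sum_{i\in S'}c_i-[0\in S]=q-C$, where $C:=c_{i_1}+\cdots+c_{i_k}$ (the $-1$ coming from the $F_0$-removal case is absorbed by $c_0=1$). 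Conversely, if $y\in(q-C)\overline{A^\circ}\cap Z(\Phi^\vee)$ then $x=y+v$ lies in the set on the left. Since translation by $v\in Z(\Phi^\vee)$ is a lattice bijection, and the hypothesis $q>C$ guarantees $q-C>0$ so that $(q-C)\overline{A^\circ}$ is a genuine positive dilate, we obtain the claimed identity with ${\rm L}_{\overline{A^\circ}}(q-C)$.

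For the ``in particular'' part, take $\{i_1,\ldots,i_k\}=\{0,1,\ldots,\ell\}$, so the complement of the removed facets inside $q\overline{A^\circ}$ is precisely the open alcove $qA^\circ$; the shift constant becomes $C=\sum_{i=0}^\ell c_i=\h$. The first assertion thus gives ${\rm L}_{A^\circ}(q)={\rm L}_{\overline{A^\circ}}(q-\h)$ for every $q>\h$, and since both sides are quasi-polynomials in $q\in\Z$ that agree on an infinite arithmetic progression (in fact all sufficiently large $q$), they coincide as quasi-polynomials.

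The proof is essentially a bookkeeping exercise; the only real subtlety is the asymmetric role of $\alpha_0$. There is no dual basis vector $\varpi^\vee_0$ to absorb the removal of $F_0$, so that facet contributes to $C$ through the relation $c_0=1$ rather than through the shift $v$. Once this is noted and the facet-removal on lattice points is correctly translated into strict inequalities, the bijection and the identification of $\h$ with $\sum_{i=0}^\ell c_i$ complete the argument.
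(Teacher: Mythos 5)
Your proof is correct and is essentially the paper's own approach: the paper defers to \cite[Corollaries 3.4 and 3.5]{Y18W}, where the identity is obtained by exactly this translation-by-fundamental-coweights bijection (with the removal of $F_0$ absorbed through $c_0=1$ rather than a shift vector), and the same trick reappears verbatim in the paper's proof of the deformed analogue, Proposition \ref{prop:affine-remove}, via the bijection $x \mapsto x+(b_i+1)\varpi_i^\vee$. Your closing step --- extending ${\rm L}_{A^\circ}(q)={\rm L}_{\overline{A^\circ}}(q-\h)$ from $q>\h$ to all $q\in\Z$ by comparing constituents of two quasi-polynomials that agree on all large integers --- is also the standard justification and is sound.
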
 
\begin{proof} 
See \cite[Corollaries 3.4 and 3.5]{Y18W}.
\end{proof}

In general, it is not easy to find explicit formulas which involve both characteristic and Ehrhart quasi-polynomials.
With regards to root systems, there is an interesting relation between these quasi-polynomials.
\begin{theorem} 
\label{thm:quasi-Ehrhart}
$$\chi^{\quasi}_{\Phi^+}(q) =\frac{\#W}{f}{\rm L}_{A^\circ}(q).$$
\end{theorem}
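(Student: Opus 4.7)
The plan is to reinterpret $\chi^{\quasi}_{\Phi^+}(q)$ as a count of coweight lattice points in dilated open alcoves, using the duality between the root lattice and the coweight lattice, and then to apply the Worpitzky partition together with the affine Weyl group action to collect the count into $N$ copies of ${\rm L}_{A^\circ}(q)$.

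First I would exploit the perfect pairing $(\alpha_i, \varpi^\vee_j)=\delta_{ij}$ to identify $Z(\Phi^\vee)=\bigoplus_{i=1}^\ell \Z\varpi^\vee_i$ with $\Z^\ell$ in such a way that for $y=\sum z_i\varpi^\vee_i$ and $\alpha=\sum a_i\alpha_i\in\Phi^+$ the pairing $(\alpha,y)=\sum a_iz_i$ recovers the linear form defining $H_{\alpha,\Z_q}$. Since $qP^\diamondsuit=\sum_{i=1}^\ell (0,q]\varpi^\vee_i$ contains exactly one representative from each coset of $Z(\Phi^\vee)/qZ(\Phi^\vee)$, this yields
\begin{equation*}
\chi^{\quasi}_{\Phi^+}(q) = \#\bigl\{y \in qP^\diamondsuit \cap Z(\Phi^\vee) : (\alpha,y)\not\equiv 0 \pmod{q}\ \text{for all}\ \alpha\in\Phi^+\bigr\}.
\end{equation*}

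Next I would dilate the Worpitzky partition of Theorem \ref{thm:Worpitzky} by $q$ to write $qP^\diamondsuit=\bigsqcup_{i=1}^N qA_i^\diamondsuit$, and argue that inside each piece $qA_i^\diamondsuit$, the points satisfying the non-vanishing condition are precisely those in the open alcove $qA_i^\circ$. For the forward implication, a boundary point of $qA_i^\diamondsuit$ lies on some facet hyperplane $(\beta,x)=qk_\beta$ with $\beta\in J$, and so fails the condition. For the reverse, on any open alcove every positive root $\alpha$ satisfies $qk_\alpha<(\alpha,y)<q(k_\alpha+1)$ for some integer $k_\alpha$, so $(\alpha,y)$ cannot be a multiple of $q$.

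Finally I would use the simply transitive action of $W_{\rm aff}=W\ltimes Q(\Phi^\vee)$ on alcoves: for each $i\in[N]$ there exist $\sigma\in W$ and $\mu\in Q(\Phi^\vee)$ with $A_i^\circ=\sigma(A^\circ)+\sigma(\mu)$, so $qA_i^\circ=\sigma(qA^\circ)+q\sigma(\mu)$. Since $W$ preserves $Z(\Phi^\vee)$ and $q\sigma(\mu)\in qQ(\Phi^\vee)\subseteq Z(\Phi^\vee)$, the affine map $x\mapsto \sigma(x)+q\sigma(\mu)$ restricts to a bijection $qA^\circ\cap Z(\Phi^\vee)\to qA_i^\circ\cap Z(\Phi^\vee)$; each open alcove therefore contributes exactly ${\rm L}_{A^\circ}(q)$ lattice points. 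Summing over the $N=\#W/f$ alcoves gives the formula. The main subtlety I expect is the dualization in the first step—keeping careful track that the lattice $Q(\Phi)$ indexing $\chi^{\quasi}_{\Phi^+}(q)$ is dual to the lattice $Z(\Phi^\vee)$ indexing ${\rm L}_{A^\circ}(q)$—together with the boundary bookkeeping in the Worpitzky partition that ensures each modular class is counted exactly once.
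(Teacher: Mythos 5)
Your proposal is correct and is essentially the argument the paper itself invokes: the identification of $\chi^{\quasi}_{\Phi^+}(q)$ with $\#X_{\Phi^+}(q)$ is exactly Proposition \ref{prop:bijection} (see Remark \ref{rem:partial-KTT}), and your Worpitzky-partition-plus-$W_{\rm aff}$-transitivity count of $N=\#W/f$ copies of ${\rm L}_{A^\circ}(q)$ is the computation of \cite[\S3.3, Proposition 3.7]{Y18W} to which the paper's proof defers. Your boundary analysis (points of $qA_i^\diamondsuit\setminus qA_i^\circ$ lie on some $H_{\beta,qk_\beta}$, while points of an open dilated alcove satisfy $qk_\alpha<(\alpha,y)<q(k_\alpha+1)$) is exactly the right bookkeeping and matches the cited treatment.
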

\begin{proof} 
See, e.g., \cite{KTT10}, \cite[Proposition 3.7]{Y18W}.
\end{proof}

\begin{theorem}
\label{thm:KTTproceed}
 The minimum period of $\chi^{\quasi}_{\Phi^+}(q)$ is equal to $\lcm(c_1,\ldots, c_\ell)$. 
 Furthermore, by a case-by-case argument, it can be checked that the minimum period coincides with the LCM-period $\rho_{\Phi^+}$.
\end{theorem}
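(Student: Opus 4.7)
The plan is to split the statement into its two assertions and handle them in turn. For the equality of the minimum period with $\lcm(c_1,\ldots,c_\ell)$, I would first reduce to an Ehrhart question: Theorem \ref{thm:quasi-Ehrhart} gives $\chi^{\quasi}_{\Phi^+}(q) = \frac{\#W}{f}\,{\rm L}_{A^\circ}(q)$, and multiplying a quasi-polynomial by a fixed nonzero integer preserves its minimum period, so the minimum period of $\chi^{\quasi}_{\Phi^+}$ equals that of ${\rm L}_{A^\circ}$. Applying Proposition \ref{prop:remove} then reduces the problem further to the minimum period of ${\rm L}_{\overline{A^\circ}}$, since translating the argument by the constant $\h$ does not affect the period.

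Next I would exploit the fact that $\overline{A^\circ}$ is the rational simplex with vertices $0, \varpi^\vee_1/c_1, \ldots, \varpi^\vee_\ell/c_\ell$ in the coweight lattice $Z(\Phi^\vee)$. The denominators of these vertices are $1, c_1, \ldots, c_\ell$, so a standard result in Ehrhart theory (McMullen) gives that the minimum period of ${\rm L}_{\overline{A^\circ}}(q)$ divides $\lcm(c_1,\ldots,c_\ell)$. For the reverse divisibility, I would argue that each $c_i$ must divide the minimum period by producing, for each $i$, an arithmetic class modulo $c_i$ whose constituent differs from another. A concrete way to do this is to localize the lattice-point count near the apex $q\varpi^\vee_i/c_i$: this vertex lies on the boundary of $q\overline{A^\circ}$ exactly when $c_i \mid q$, and comparing nearby lattice points in the facet opposite $0$ produces a detectable jump in ${\rm L}_{\overline{A^\circ}}(q)$ whenever $c_i \nmid q$.

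For the second assertion, that the minimum period coincides with $\rho_{\Phi^+}$, the proof is by case-by-case inspection through the irreducible root system types $A_\ell, B_\ell, C_\ell, D_\ell, E_6, E_7, E_8, F_4, G_2$. For each type, the marks $c_i$ are classical data whose $\lcm$ is easily computed, while $\rho_{\Phi^+}=\lcm(d_{\scS,n_{\scS}}\mid \scS\subseteq \Phi^+)$ can be evaluated by inspecting the invariant factors of $Q(\Phi)/\langle\scS\rangle$ across sublists $\scS$. I expect the main obstacle to be the bookkeeping in this case-by-case verification, especially for $E_7$ and $E_8$ where $\Phi^+$ is large; in practice one only needs to track sublists whose generated rank-$\ell$ sublattice realizes the maximal torsion in $Q(\Phi)$, which reduces the search considerably and, type by type, reproduces $\lcm(c_1,\ldots,c_\ell)$.
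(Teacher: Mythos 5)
Your reduction steps are sound: Theorem \ref{thm:quasi-Ehrhart} plus Proposition \ref{prop:remove} do reduce the question to the minimum period of ${\rm L}_{\overline{A^\circ}}(q)$, and the upper bound is correct, since the vertices $0,\varpi^\vee_1/c_1,\ldots,\varpi^\vee_\ell/c_\ell$ have denominators $1,c_1,\ldots,c_\ell$, so Ehrhart's theorem gives that the period divides $\lcm(c_1,\ldots,c_\ell)$. (Note the paper itself offers no argument here: its proof of Theorem \ref{thm:KTTproceed} is a citation to \cite{KTT10}, so your attempt at a self-contained proof is welcome in principle.) The genuine gap is in your lower bound. First, a detail: the vertex $q\varpi^\vee_i/c_i$ is \emph{always} a vertex of $q\overline{A^\circ}$; what depends on $c_i \mid q$ is whether it belongs to $Z(\Phi^\vee)$. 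More seriously, exhibiting one boundary lattice point that appears or disappears does not show that two constituents differ as polynomials: constituents are compared as elements of $\Z[t]$, an Ehrhart count is global rather than local, and \emph{period collapse} (rational polytopes whose Ehrhart quasi-polynomial has strictly smaller period than the vertex denominators) is a well-known phenomenon that your ``detectable jump'' heuristic does not rule out. As written, the step ``comparing nearby lattice points in the facet opposite $0$ produces a detectable jump'' is not a proof, and it is precisely the nontrivial half of the first assertion.

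A rigorous route is already available inside the paper: Proposition \ref{prop:ehrhart} gives
\begin{equation*}
\sum_{q \ge 1}{\rm L}_{A^\circ}(q)\,t^q \;=\; \frac{t^{\h}}{\prod_{i=0}^{\ell}\left(1-t^{c_i}\right)}.
\end{equation*}
Since the numerator $t^{\h}$ does not vanish at any root of unity, this rational function is already in lowest terms as far as roots of unity are concerned, so for each $i$ it has a genuine pole at a primitive $c_i$-th root of unity; a quasi-polynomial whose generating function has such a pole must have period divisible by $c_i$. This yields $\lcm(c_1,\ldots,c_\ell)$ dividing the minimum period, and combined with your upper bound finishes the first assertion — this is essentially how the cited source \cite{KTT10} argues. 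For the second assertion your plan (type-by-type computation of $\rho_{\Phi^+}$ via invariant factors of $Q(\Phi)/\langle\scS\rangle$, restricting attention to full-rank sublists realizing maximal torsion) matches the case-by-case verification alluded to in \cite{KTT10}, and is acceptable given that the statement itself only claims a case-by-case check.
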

 \begin{proof} 
See \cite[Corollary 3.2 and Remark 3.3]{KTT10}.
\end{proof}

\begin{corollary}
\label{cor:>0}  
 $\chi^{\quasi}_{\Phi^+}(q)>0$ (equivalently, ${\rm L}_{A^\circ}(q)>0$) if and only if $q \ge \h$.
 \end{corollary}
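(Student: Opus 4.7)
The plan is to reduce everything to counting coweight lattice points in $qA^\circ$ directly. By Theorem~\ref{thm:quasi-Ehrhart}, $\chi^{\quasi}_{\Phi^+}(q)=\frac{\#W}{f}{\rm L}_{A^\circ}(q)$ and $\#W/f$ is a positive integer, so the two positivity statements are equivalent, and it suffices to prove that ${\rm L}_{A^\circ}(q)>0$ exactly when $q\ge\h$.

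For the lattice point count, I would parametrize $Z(\Phi^\vee)=\bigoplus_{i=1}^\ell \Z\varpi_i^\vee$ using the dual basis, so that every $x\in Z(\Phi^\vee)$ has the form $x=\sum_{i=1}^\ell n_i\varpi_i^\vee$ with $n_i\in\Z$ and $(\alpha_j,x)=n_j$. Since $\tilde{\alpha}=\sum_{i=1}^\ell c_i\alpha_i$ (recall $\alpha_0=-\tilde{\alpha}$, $c_0=1$ and $\sum_{i=0}^\ell c_i\alpha_i=0$), we get $(\tilde{\alpha},x)=\sum_{i=1}^\ell c_i n_i$. The defining inequalities of the dilated open alcove
\[
qA^\circ=\{x\in V\mid (\alpha_i,x)>0\,(1\le i\le\ell),\ (\tilde{\alpha},x)<q\}
\]
then translate into $n_i>0$ for all $i$ together with $\sum_{i=1}^\ell c_i n_i<q$.

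The key observation is that $n_i\in\Z_{>0}$ forces $n_i\ge 1$, and $c_i\in\Z_{>0}$, so the minimum of $\sum_{i=1}^\ell c_i n_i$ over admissible integer vectors is $\sum_{i=1}^\ell c_i$, attained at $\rho^\vee:=\sum_{i=1}^\ell\varpi_i^\vee$. Using the well-known identity $\h=1+\sum_{i=1}^\ell c_i=\sum_{i=0}^\ell c_i$ (equivalently, $\h=1+{\rm ht}(\tilde{\alpha})$), this minimum equals $\h-1$. Hence the system is satisfiable by some integer vector if and only if $\h-1<q$, i.e., $q\ge\h$; in that case $\rho^\vee\in qA^\circ\cap Z(\Phi^\vee)$ witnesses ${\rm L}_{A^\circ}(q)\ge 1$, while for $q<\h$ there are no integer solutions and ${\rm L}_{A^\circ}(q)=0$. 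There is essentially no obstacle here: the argument is a one-line integrality bound once the coordinates are chosen, and the only input beyond Theorem~\ref{thm:quasi-Ehrhart} is the combinatorial identity $\h=\sum_{i=0}^\ell c_i$ already recorded in Section~\ref{sec:Worpitzky}.
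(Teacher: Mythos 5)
Your argument is correct, and it is genuinely different from what the paper does: the paper offers no internal proof at all, simply citing \cite[Corollary 3.4]{KTT10}, where the positivity is extracted from the Ehrhart series identity $\sum_{q\ge1}{\rm L}_{A^\circ}(q)t^q = t^{\h}/\prod_{i=0}^{\ell}(1-t^{c_i})$ (recorded in this paper as Proposition \ref{prop:ehrhart}) --- the factor $1/(1-t^{c_0})=1/(1-t)$ forces every coefficient from $t^{\h}$ on to be positive, and the leading $t^{\h}$ kills everything below. Your route instead counts coweight lattice points in $qA^\circ$ by hand: writing $x=\sum_i n_i\varpi_i^\vee$ so that the defining inequalities become $n_i\ge 1$ and $\sum_{i=1}^\ell c_in_i<q$, with minimum $\sum_{i=1}^\ell c_i=\h-1$ attained at $\rho^\vee$, is a clean integrality bound that gives both directions at once (the witness $\rho^\vee$ for $q\ge\h$, emptiness for $q\le\h-1$). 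What your approach buys is self-containedness and an explicit lattice point; what the generating-function approach buys is that it comes for free once Proposition \ref{prop:ehrhart} is in hand and extends uniformly to the constituent structure. One small inaccuracy to fix: the identity $\h=1+{\rm ht}(\tilde{\alpha})=\sum_{i=0}^\ell c_i$ is \emph{not} actually recorded in Section \ref{sec:Worpitzky} of the paper (only the relation $\sum_{i=0}^\ell c_i\alpha_i=0$ is); it is a standard fact you should cite from \cite{H90}, and note that the paper itself uses it silently in Proposition \ref{prop:remove}. Also make sure you state that $\#W/f=N\in\Z_{>0}$ when invoking Theorem \ref{thm:quasi-Ehrhart} for the equivalence of the two positivity claims, which you do implicitly and correctly.
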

\begin{proof}
See, e.g., \cite[Corollary 3.4]{KTT10}.
\end{proof}

We can extend Theorem \ref{thm:quasi-Ehrhart} to a formula of $\chi^{\quasi}_{\Psi}(q)$ for any $\Psi\subseteq\Phi^+$ in terms of lattice point counting functions. 
In the proposition below, we view $\Psi$ as a list with possible repetitions of elements. 
Along the proof, we will see why the choice of lattices for the characteristic and Ehrhart quasi-polynomials is important.
\begin{proposition}
\label{prop:bijection}   
Let $m=(m_\alpha)_{ \alpha  \in \Psi}$ be a vector in $\Z^\Psi$.
Set 
\begin{align*}
X_{(\Psi,m)}(q) & := qP^\diamondsuit  \cap Z(\Phi^\vee) \setminus  \bigcup_{\alpha\in \Psi,k \in \Z}H_{\alpha, kq+m_\alpha}, \\
Y_{(\Psi,m)}(q) & := \{\overline{x} \in Z(\Phi^\vee)/qZ(\Phi^\vee) \mid (\alpha, x) \not\equiv m_\alpha\bmod q, \forall \alpha \in \Psi\}.
\end{align*}
We have bijections between sets
$$X_{(\Psi,m)}(q) \simeq Y_{(\Psi,m)}(q)  \simeq \M((\Psi,m); \Z^\ell, \Z_q).$$
As a result, 
$$\chi^{\quasi}_{(\Psi,m)}(q)=\#X_{(\Psi,m)}(q)= \# Y_{(\Psi,m)}(q).$$
\end{proposition}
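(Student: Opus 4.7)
The plan is to establish the two bijections separately and then invoke Theorem \ref{thm:KTT} to identify the common cardinality with $\chi^{\quasi}_{(\Psi,m)}(q)$.

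First I would construct $X_{(\Psi,m)}(q) \simeq Y_{(\Psi,m)}(q)$ via the natural reduction map $x \mapsto \overline{x}$. The key geometric input is that $P^\diamondsuit$ is a fundamental domain for the coweight lattice $Z(\Phi^\vee)$, so $qP^\diamondsuit$ is a fundamental domain for the scaled lattice $qZ(\Phi^\vee)$. Consequently the reduction map $qP^\diamondsuit \cap Z(\Phi^\vee) \to Z(\Phi^\vee)/qZ(\Phi^\vee)$ is a bijection: each coset has a unique representative whose scaling by $1/q$ lies in $P^\diamondsuit$. To match the set-theoretic conditions, I observe that for any $\alpha \in \Phi^+$ and any $x \in Z(\Phi^\vee)$, one has $(\alpha, x) \in \Z$, since $\alpha$ is a $\Z$-combination of simple roots and $(\alpha_i, x) \in \Z$ by definition of $Z(\Phi^\vee)$. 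Hence $x \in \bigcup_{k \in \Z} H_{\alpha, kq+m_\alpha}$ if and only if $(\alpha, x) \equiv m_\alpha \pmod q$, a congruence that depends only on the class $\overline{x}$ because $(\alpha, x+qy) = (\alpha,x) + q(\alpha,y)$ with $(\alpha,y) \in \Z$ for $y \in Z(\Phi^\vee)$. This is exactly the membership condition cutting out $Y_{(\Psi,m)}(q)$.

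Next I would establish $Y_{(\Psi,m)}(q) \simeq \M((\Psi,m); \Z^\ell, \Z_q)$ by changing basis. Since $Z(\Phi^\vee) = \bigoplus_{i=1}^\ell \Z \varpi_i^\vee$, writing $x = \sum_{i=1}^\ell z_i \varpi_i^\vee$ yields an isomorphism $Z(\Phi^\vee)/qZ(\Phi^\vee) \simeq \Z_q^\ell$, $\overline{x} \mapsto (\overline{z_1},\ldots,\overline{z_\ell})$. For a root $\alpha = \sum_i d_i \alpha_i$, the dual-basis identity $(\alpha_i, \varpi_j^\vee) = \delta_{ij}$ gives $(\alpha, x) = \sum_i d_i z_i$. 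This is precisely the left-hand side of the linear equation that defines $H_{\alpha,m_\alpha,\Z_q}$ once we identify $\Z^\ell$ with the root lattice $Q(\Phi) = \bigoplus_i \Z\alpha_i$ via the simple-root basis (so that $\alpha$ has coordinates $(d_1,\ldots,d_\ell)$). Therefore $(\alpha, x) \not\equiv m_\alpha \pmod q$ corresponds exactly to $(\overline{z_1},\ldots,\overline{z_\ell}) \notin H_{\alpha,m_\alpha,\Z_q}$, and the bijection follows.

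Finally, Theorem \ref{thm:KTT} applied to the list $\Psi \subseteq Q(\Phi) \simeq \Z^\ell$ (the lattice choice fixed by Remark \ref{rem:cha-lattice}) gives $\#\M((\Psi,m); \Z^\ell, \Z_q) = \chi^{\quasi}_{(\Psi,m)}(q)$, which combined with the two bijections yields the stated equalities. The only subtle point, and really the reason this proposition is worth recording, is the interplay between the two lattices: the root lattice $Q(\Phi)$ with basis $\{\alpha_i\}$ governs the $\Z_q$-plexification, while the coweight lattice $Z(\Phi^\vee)$ with dual basis $\{\varpi_i^\vee\}$ governs the lattice-point count in $qP^\diamondsuit$; the duality $(\alpha_i,\varpi_j^\vee)=\delta_{ij}$ is exactly what makes the translation between the two pictures automatic.
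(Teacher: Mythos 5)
Your proposal is correct and takes essentially the same route as the paper: the same two-step decomposition, with the bijection $Y_{(\Psi,m)}(q)\simeq \M((\Psi,m);\Z^\ell,\Z_q)$ obtained by exactly the paper's change of coordinates via the duality $(\alpha_i,\varpi_j^\vee)=\delta_{ij}$, identifying $\Z^\ell$ with $Q(\Phi)$ in the simple-root basis. The only difference is presentational: where the paper delegates the bijection $X_{(\Psi,m)}(q)\simeq Y_{(\Psi,m)}(q)$ to the argument of \cite[\S 3.3]{Y18W}, you spell out that same fundamental-domain argument (uniqueness of coset representatives in $qP^\diamondsuit$ plus well-definedness of the congruence $(\alpha,x)\equiv m_\alpha \bmod q$ on classes modulo $qZ(\Phi^\vee)$) explicitly.
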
 
\begin{proof} 
The bijection $X_{(\Phi^+,m)}(q) \simeq Y_{(\Phi^+,m)}(q)$ is proved in  \cite[\S3.3]{Y18W}. 
We can use exactly the same argument applied to every subset $\Psi$.
The proof of $Y_{\Psi}(q)  \simeq \M(\Psi; \Z^\ell, \Z_q)$ for an arbitrary $\Psi \subseteq \Phi^+$ runs as follows:
\begin{align*}
Y_{(\Psi,m)}(q) 
 & = \{ \overline{x} = \sum_{i=1}^\ell  \overline{z_i} \varpi^\vee_i \mid (\alpha,x)\not\equiv m_\alpha\bmod q, \forall \alpha \in \Psi\}\\
 & = \{ \overline{x} = \sum_{i=1}^\ell \overline{z_i}\varpi^\vee_i \mid ( \sum_{i=1}^\ell  S_{ij}\alpha_i, \sum_{i=1}^\ell  z_i \varpi^\vee_i)\not\equiv m_\alpha\bmod q, \,(1 \le j \le \#\Psi) \}\\
 & \simeq \{ \textbf{z} = (\overline{z_1}, \ldots, \overline{z_\ell}) \in  \Z_q^\ell  \mid \sum_{i=1}^\ell  z_i S_{ij} \not\equiv m_\alpha\bmod q,  \,(1 \le j \le \#\Psi)\}\\
& =\M((\Psi,m); \Z^\ell, \Z_q).
\end{align*}
\end{proof}

\begin{remark}
\label{rem:partial-KTT}
The bijection $X_{\Phi^+}(q) \simeq \M( \Phi^+; \Z^\ell, \Z_q)$ appeared (without proof) in  \cite[Proof of Theorem 3.1]{KTT10}. 
Theorem \ref{thm:quasi-Ehrhart} is a special case of Proposition \ref{prop:bijection} because  $\chi^{\quasi}_{\Phi^+}(q)=\#X_{\Phi^+}(q)=\frac{\#W}{f}{\rm L}_{A^\circ}(q)$  \cite[\S3.3]{Y18W}. 
\end{remark}
 
\section{Eulerian polynomials for Weyl subarrangements}
\label{sec:generating} 
Let $\Psi \subseteq \Phi^+$, and set $\Psi^c:=\Phi^+ \setminus \Psi$. 

 \begin{definition}\label{def:dsc}   
The \emph{descent $\dsc_\Psi$ w.r.t. $\Psi$}  is the function $\dsc_\Psi: W \to \Z_{\ge0}$ defined by
$$
\dsc_\Psi(w) := \sum_{0 \le i \le \ell,\, w(\alpha_i)\in -\Psi^c}c_i.
$$
\end{definition}

 \begin{definition}\label{def:Eulerian}   
The \emph{(arrangement theoretical Eulerian or) $\A$-Eulerian polynomial} of $\Psi$ (or of the Weyl subarrangement $\A_{\Psi}$) is defined by
$$E_\Psi(t):= \frac1f \sum_{w\in W}t^{\h-\dsc_\Psi(w)}.$$
\end{definition}

\begin{remark}
\label{rem:empty-full1}
 \begin{enumerate}[(a)]
\item  
If $\Psi = \Phi^+$, then $\dsc_{\Phi^+}(w)=0$ for all $w\in W$, and $E_{\Phi^+}(t)= \frac{\#W}{f}t^\h$. 
\item  If $\Psi =\emptyset$, then $\dsc_\emptyset=\dsc=\rm{cdes}$, the \emph{circular descent statistic}, e.g., \cite[Definition 6.2]{LP18}, \cite[Definition 4.1]{Y18W}. 
Then $E_{\emptyset}(t)=R_\Phi(t)$, the \emph{generalized Eulerian polynomial}, e.g.,  \cite[Definition 4.4]{Y18W}. 
In this paper, we often call it the Lam-Postnikov Eulerian polynomial.
Note that if $\Phi$ is of type $A_\ell$, then $R_\Phi(t)=A_\ell(t)$,  the \emph{classical $\ell$-th Eulerian polynomial} \cite[Theorem 10.1]{LP18}. 
It should also be noted that $\dsc_\emptyset$ coincides with the notion of \emph{affine descents} by Dilks-Petersen-Stembridge \cite[\S2.5]{DPS09} only in type $A$ case.
\end{enumerate}
\end{remark}

\begin{lemma}
\label{lem:range}
For all $w\in W$, $0 \le \dsc_\Psi(w)< \h$. 
In particular, $E_\Psi(0)=0$.
 \end{lemma}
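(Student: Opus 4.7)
The plan is to prove the two-sided bound $0\le \dsc_\Psi(w)<\h$ by combining two standard facts about the extended base $\widetilde{\Delta}$: the Coxeter number identity $\h=\sum_{i=0}^\ell c_i$, and the linear relation $\sum_{i=0}^\ell c_i\alpha_i=0$ recorded in the excerpt. Once this bound is in hand, $E_\Psi(0)=0$ is immediate, because each monomial $t^{\h-\dsc_\Psi(w)}$ appearing in the defining sum will have strictly positive exponent.

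The lower bound $\dsc_\Psi(w)\ge 0$ is immediate, since every $c_i$ is a positive integer ($c_0=1$ and $c_1,\ldots,c_\ell\in\Z_{>0}$). For the upper bound, I would first observe that $-\Psi^c\subseteq -\Phi^+=\Phi^-$, so the index set
$$S_w:=\{0\le i\le \ell\mid w(\alpha_i)\in -\Psi^c\}$$
is contained in $T_w:=\{0\le i\le\ell\mid w(\alpha_i)\in \Phi^-\}$, giving $\dsc_\Psi(w)\le \sum_{i\in T_w}c_i$.

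The only real step is to rule out $T_w=\{0,1,\ldots,\ell\}$. Applying $w$ to the linear relation yields $\sum_{i=0}^\ell c_i\,w(\alpha_i)=0$; if every $w(\alpha_i)$ were a negative root, then $\sum_{i=0}^\ell c_i(-w(\alpha_i))$ would be a strictly positive integral combination of positive roots, which cannot equal zero. Hence some index $j$ satisfies $w(\alpha_j)\in \Phi^+$; for this $j$ we have $j\notin S_w$ and therefore
$$\dsc_\Psi(w)\le \sum_{i\in T_w}c_i\le \h-c_j<\h.$$
Finally, setting $t=0$ in the definition of $E_\Psi(t)$, each monomial $t^{\h-\dsc_\Psi(w)}$ has a strictly positive integer exponent and so vanishes; summing yields $E_\Psi(0)=0$. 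There is no genuine obstacle here; the only point worth highlighting is that the strict inequality crucially uses the affine linear dependence among the extended simple roots, not merely the additive identity for $\h$.
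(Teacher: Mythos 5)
Your proof is correct and follows essentially the same route as the paper: both arguments apply $w$ to the linear relation $c_0\alpha_0+c_1\alpha_1+\cdots+c_\ell\alpha_\ell=0$ to conclude that some $w(\alpha_j)$ must be a positive root (the paper phrases this as: if $w(\alpha_i)\in-\Psi^c$ for all $1\le i\le\ell$, then $w(\alpha_0)=-\sum_{i=1}^\ell c_i w(\alpha_i)\in\Phi^+$), so that index drops out of the descent sum and $\dsc_\Psi(w)\le \h-c_j<\h$. Your uniform treatment via $S_w\subseteq T_w$ merely replaces the paper's separate handling of the case $\Psi=\Phi^+$ and its two-case split over $1\le i\le\ell$ versus $i=0$; this is a cosmetic, not substantive, difference.
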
 
\begin{proof} 
If $\Psi = \Phi^+$, then the statements are clear  by Remark \ref{rem:empty-full1}(a).
Assume that $\Psi \ne \Phi^+$. 
If $w(\alpha_i)\notin -\Psi^c$ for some $1 \le i \le \ell$, then $\dsc_\Psi(w)< \h$.
Otherwise, we have $w(\alpha_0)=-\sum_{i=1}^\ell c_iw(\alpha_i) \in \Phi^+$.
Thus $w(\alpha_0)\notin -\Psi^c$, and hence $\dsc_\Psi(w)< \h$.
\end{proof}

\begin{lemma}
\label{lem:extend}
\begin{enumerate}[(i)]
\item Let $w \in W$. Suppose that $w$ induces a permutation on $ \widetilde{\Delta} = \{ \alpha_0, \alpha_1, \ldots, \alpha_\ell\}$. If $w(\alpha_i)=\alpha_{p_i}$, then $c_i =c_{p_i}$.
\item  Let  $w_1, w_2 \in W$. If there exists $\gamma \in V$ such that $w_1( A^\circ)=w_2( A^\circ) + \gamma$, then $\dsc_\Psi(w_1)=\dsc_\Psi(w_2)$.
\end{enumerate}
\end{lemma}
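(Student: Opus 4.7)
For part (i), my plan is to exploit the fact that the linear relation among $\widetilde{\Delta}$ is unique up to scalar. Starting from $c_0\alpha_0 + c_1\alpha_1 + \cdots + c_\ell\alpha_\ell = 0$, I would apply $w$ and re-index by $j = p_i$ to obtain $\sum_{j=0}^\ell c_{p^{-1}(j)} \alpha_j = 0$. Since $\{\alpha_1,\ldots,\alpha_\ell\}$ is a basis of $V$, the kernel of the map $(a_0,\ldots,a_\ell) \mapsto \sum a_i \alpha_i$ from $\R^{\ell+1}$ to $V$ is one-dimensional, so the two linear relations must be proportional: $c_{p^{-1}(j)} = \lambda c_j$ for some $\lambda > 0$. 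Summing over $j$ and using that $p$ is a permutation of $\{0,1,\ldots,\ell\}$ yields $\sum_j c_j = \lambda \sum_j c_j$, which forces $\lambda = 1$ (since all $c_i > 0$). Hence $c_{p_i} = c_i$.

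For part (ii), I would reduce everything to part (i) by first showing that $w := w_2^{-1} w_1 \in W$ permutes $\widetilde{\Delta}$. The key geometric observation is that the outward normal vectors of the facets of $w(A^\circ)$, for $w \in W$, are precisely the vectors $-w(\alpha_i)$ for $i = 0, 1, \ldots, \ell$; this uses that $W$ fixes the origin, so $w$ sends the affine facet $H_{\alpha_0,-1}$ to $H_{w(\alpha_0),-1}$. Translation by $\gamma \in V$ preserves outward normals, so the multisets of outward normals of $w_1(A^\circ)$ and of $w_2(A^\circ) + \gamma$ coincide. As these are the same simplex, comparing outward normals produces a permutation $\sigma$ of $\{0,1,\ldots,\ell\}$ with $w_1(\alpha_i) = w_2(\alpha_{\sigma(i)})$, equivalently $w(\alpha_i) = \alpha_{\sigma(i)}$.

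With the permutation $\sigma$ in hand, part (i) applied to $w$ gives $c_i = c_{\sigma(i)}$, so $c_{\sigma^{-1}(j)} = c_j$ for all $j$. I would then conclude by the change of variable $j = \sigma(i)$ in the defining sum:
\[
\dsc_\Psi(w_1) = \sum_{\substack{0\le i\le \ell \\ w_1(\alpha_i)\in -\Psi^c}} c_i = \sum_{\substack{0\le j\le \ell \\ w_2(\alpha_j)\in -\Psi^c}} c_{\sigma^{-1}(j)} = \sum_{\substack{0\le j\le \ell \\ w_2(\alpha_j)\in -\Psi^c}} c_j = \dsc_\Psi(w_2).
\]
The main obstacle I anticipate is the careful bookkeeping in (ii) of outward-normal \emph{vectors} versus mere facet \emph{directions}: matching directions alone only gives $w_1(\alpha_i) = \pm w_2(\alpha_{\sigma(i)})$, and the spurious sign must be excluded by using that translation is orientation-preserving together with the fact that $A^\circ$ is a bounded simplex, so the two equal simplices share the same outward-normal vectors and not merely the same lines through them.
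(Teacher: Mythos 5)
Your proposal is correct and follows essentially the same route as the paper: for (ii) the paper likewise matches the supporting hyperplanes of the two simplices to get $w_1(\alpha_i)=w_2(\alpha_{p_i})$, applies (i), and re-indexes the defining sum exactly as you do. The only difference is that the paper cites (i) from the literature (\cite[Lemma 4.3(1)]{Y18W}) while you supply the standard proof via uniqueness of the linear relation $\sum_{i=0}^\ell c_i\alpha_i=0$, and you spell out the outward-normal bookkeeping (including excluding the sign ambiguity) that the paper leaves implicit.
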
 
\begin{proof} 
(i) is exactly \cite[Lemma 4.3(1)]{Y18W}. (ii) is similar to  \cite[Lemma 4.3(2)]{Y18W}:
since the supporting hyperplanes coincide, $w_1(\alpha_i)=w_2(\alpha_{p_i})$ for $0 \le i\le \ell$. By Definition \ref{def:dsc} and (i), we have
$$
\dsc_\Psi(w_1) = \sum_{0 \le i \le \ell,\, w_1(\alpha_i)\in -\Psi^c}c_i
= \sum_{0 \le p_i \le \ell,\, w_2(\alpha_{p_i})\in -\Psi^c}c_{p_i}=\dsc_\Psi(w_2).
$$
\end{proof}

Let $A'$ be an arbitrary alcove. 
We can write $A'=w( A^\circ) + \gamma$ for some $w\in W$ and $\gamma \in Q(\Phi^\vee)$.
Recall the the Worpitzky partition from Theorem \ref{thm:Worpitzky} that $P^\diamondsuit=\bigsqcup_{i \in [N]}A_i^\diamondsuit$.
By Lemma \ref{lem:extend}, we can extend $\dsc_\Psi$ to a function on the set of all alcoves (in particular, on the set $\Xi$ of alcoves $A_i^\circ$ contained in $P^\diamondsuit$) as follows:

 \begin{definition}\label{def:asc-ext}   
$$\dsc_\Psi(A'):=\dsc_\Psi(w).$$
\end{definition}

\begin{theorem}\label{thm:equivalent}   
$$E_\Psi(t)= \sum_{i \in [N]} t^{\h-\dsc_\Psi(A_i^\circ)}.$$
\end{theorem}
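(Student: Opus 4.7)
The plan is to regroup the sum $\sum_{w\in W}t^{\h-\dsc_\Psi(w)}$ according to which alcove $A_i^\circ\in\Xi$ the alcove $w(A^\circ)$ is $Z(\Phi^\vee)$-translation-equivalent to. If the resulting map $\pi\colon W\to\Xi$ is $f$-to-$1$, then because $\dsc_\Psi$ is constant on its fibers (Lemma \ref{lem:extend}(ii)) the sum breaks up into $N$ blocks of size $f$, each contributing $f\cdot t^{\h-\dsc_\Psi(A_i^\circ)}$, and the factor $1/f$ in Definition \ref{def:Eulerian} produces the claimed identity.

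First I would define $\pi$ as follows: since $P^\diamondsuit$ is a fundamental domain for the $Z(\Phi^\vee)$-translation action on $V$ and, by the Worpitzky partition (Theorem \ref{thm:Worpitzky}), is the disjoint union of the partial closures of the $N$ distinguished alcoves $A_1^\circ,\ldots,A_N^\circ$, every alcove $A$ can be written uniquely as $A=A_i^\circ+\gamma$ with $i\in[N]$ and $\gamma\in Z(\Phi^\vee)$. Applied to $A=w(A^\circ)$ with $w\in W$, this singles out $\pi(w):=i$. For any $w\in\pi^{-1}(A_i^\circ)$ one then has $w(A^\circ)=A_i^\circ+\gamma$ for some $\gamma\in Z(\Phi^\vee)$, and Lemma \ref{lem:extend}(ii) together with Definition \ref{def:asc-ext} gives $\dsc_\Psi(w)=\dsc_\Psi(A_i^\circ)$, as needed.

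It remains to show that $|\pi^{-1}(A_i^\circ)|=f$ for every $i$. Let $U:=\{u\in W\mid u(A^\circ)\in A^\circ+Z(\Phi^\vee)\}$. Using that $W$ preserves $Z(\Phi^\vee)$, a short verification shows $U$ is a subgroup of $W$ and that two elements $w_1,w_2$ have $\pi(w_1)=\pi(w_2)$ iff $w_2^{-1}w_1\in U$; thus every nonempty fiber is a left coset of $U$, so fibers share the common size $|U|$. I would then identify $U$ with the stabilizer of $A^\circ$ inside the extended affine Weyl group $\widetilde{W}=W\ltimes Z(\Phi^\vee)$ via $u\mapsto(u,-\gamma(u))$, where $\gamma(u)\in Z(\Phi^\vee)$ is the unique translation with $u(A^\circ)=A^\circ+\gamma(u)$. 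Since $W_{\rm aff}$ already acts simply transitively on alcoves and $[\widetilde{W}:W_{\rm aff}]=[Z(\Phi^\vee):Q(\Phi^\vee)]=f$, this stabilizer has order $f$. Hence $|U|=f$, and as $|W|=fN$ all fibers are in fact nonempty, so $\pi$ is $f$-to-$1$ and surjective.

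The main obstacle is this identification $|U|=f$, which genuinely uses the index-of-connection structure of $\widetilde{W}/W_{\rm aff}$ rather than any property of $\Psi$; once it is in hand, the remainder is bookkeeping, yielding
$$\frac{1}{f}\sum_{w\in W}t^{\h-\dsc_\Psi(w)}=\frac{1}{f}\sum_{i\in[N]}\sum_{w\in\pi^{-1}(A_i^\circ)}t^{\h-\dsc_\Psi(A_i^\circ)}=\sum_{i\in[N]}t^{\h-\dsc_\Psi(A_i^\circ)},$$
which is the stated formula.
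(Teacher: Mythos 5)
Your proof is correct and takes essentially the same route as the paper: the identity reduces to the fact that exactly $f$ elements of $\widehat{W_{\rm aff}} := W \ltimes Z(\Phi^\vee)$ map $A^\circ$ to a given alcove of $P^\diamondsuit$, which is precisely the counting statement the paper's one-line proof invokes (via Theorem 4.7 of the cited work of the third author), and your coset/stabilizer computation identifying $|U| = [\widehat{W_{\rm aff}} : W_{\rm aff}] = f$ just supplies the proof of that fact. The remaining bookkeeping --- using Lemma \ref{lem:extend}(ii) and Definition \ref{def:asc-ext} to make $\dsc_\Psi$ constant on the fibers of $\pi$ --- is exactly what the paper's argument presupposes.
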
 
\begin{proof} 
Similar to  \cite[Theorem 4.7]{Y18W}: there are exactly $f$ elements of the group $\widehat{W_{\rm aff}} := W \ltimes Z(\Phi^\vee)$ that map $A^\circ$ to a given alcove in $P^\diamondsuit$.
\end{proof}

In what follows, we shall sometimes abuse terminology and call a face of $\overline{A_i^\circ}$   that is contained in the partial closure $A_i^\diamondsuit$, a face of $A_i^\diamondsuit$.
 \begin{definition}
 \label{def:compatibleW}   
A subset $\Psi \subseteq \Phi^+$ is said to be \emph{compatible} (with the Worpitzky partition) if for each $A_i^\diamondsuit \subseteq P^\diamondsuit$ the following condition holds:  
$A_i^\diamondsuit \cap H_{\alpha, m_\alpha}$ for $\alpha\in \Psi,m_\alpha \in \Z$ is either empty, or contained in $A_i^\diamondsuit \cap H_{\beta, m_\beta}$ for $\beta\in \Psi,m_\beta \in \Z$ so that   $A_i^\diamondsuit \cap H_{\beta, m_\beta}$ is a facet of $A_i^\diamondsuit$.
\end{definition}

\noindent 
In other words, $\Psi$ is compatible if and only if the intersection of each $A_i^\diamondsuit$ and $\bigcup_{\mu\in \Psi,k \in \Z}H_{\mu,k}$ consisting of $a$ facets $K_1,\ldots, K_a$, and $b$ faces $G_1,\ldots, G_b$ of dimension $\le \ell-2$ of $A_i^\diamondsuit$, either is empty, or satisfies the condition that for every non-facet $G_j$ there exists a facet $K_s$ such that $G_j \subseteq K_s$. 
Intuitively, every ``non-empty face intersection" of $A_i^\diamondsuit$ and the affine hyperplanes w.r.t. roots in $\Psi$ can be lifted to a ``facet intersection".

\begin{example}
 \label{ex:compatibleW}
 \begin{enumerate}[(a)]
\item 
Clearly, $\emptyset$ and $\Phi^+$ are compatible. 
Let $\delta \in \Phi^+$, then $\Psi=\Phi^+ \setminus\{\delta\}$ is compatible. 
It is because if $A_i^\diamondsuit \cap H_{\alpha, m_\alpha}$ is a non-empty face but not a facet of $A_i^\diamondsuit$ for $i \in [N]$, $\alpha\in \Psi,m_\alpha \in \Z$, then $A_i^\diamondsuit \cap H_{\alpha, m_\alpha}= \cap_{j=1}^n H_{\beta_j, m_{\beta_j}}\cap  A_i^\diamondsuit$, where $H_{\beta_j, m_{\beta_j}}$ ($\beta_j \in \Phi^+$) are the supporting hyperplanes of the facets of $A_i^\diamondsuit$ for all $1 \le j \le n$ with $n \ge 2$. 
Thus, there exists $1 \le k \le n$ such that $\beta_k \ne \delta$, i.e., $\beta_k \in \Psi$.
\item 
If $\Psi_1=\{\alpha\}$ with $\alpha = \sum_{i=1}^\ell d_i\alpha_i \in \Phi^+ \setminus\Delta$, then $\Psi_1$ is not compatible because $H_{\alpha, {\rm ht}(\alpha)}\cap P^\diamondsuit = \left\{\sum_{i=1}^\ell p_i\varpi^\vee_i  \mid \sum_{i=1}^\ell p_id_i = {\rm ht}(\alpha), p_i \in  (0,1]\right\} = \left\{ \sum_{i=1}^\ell \varpi^\vee_i\right\}$. 
There exists a non-compatible set consisting of more than one elements. 
For example, let $\Phi=G_2$ as in Figure \ref{fig:G2}. 
Then it is easily seen from Figure \ref{fig:q=9} that $\Psi' = \{\alpha_1, \alpha_1+\alpha_2, 3\alpha_1+2\alpha_2\}$ is not compatible (regardless of the value of $q$) because there exist partially closed alcoves (in green) such that the affine hyperplanes (in pink) w.r.t. roots in $\Psi'$ intersect them only at their vertices.
\end{enumerate}
\end{example}

\begin{figure}[!ht]
\centering
    \includegraphics[width=9cm,height=9cm]{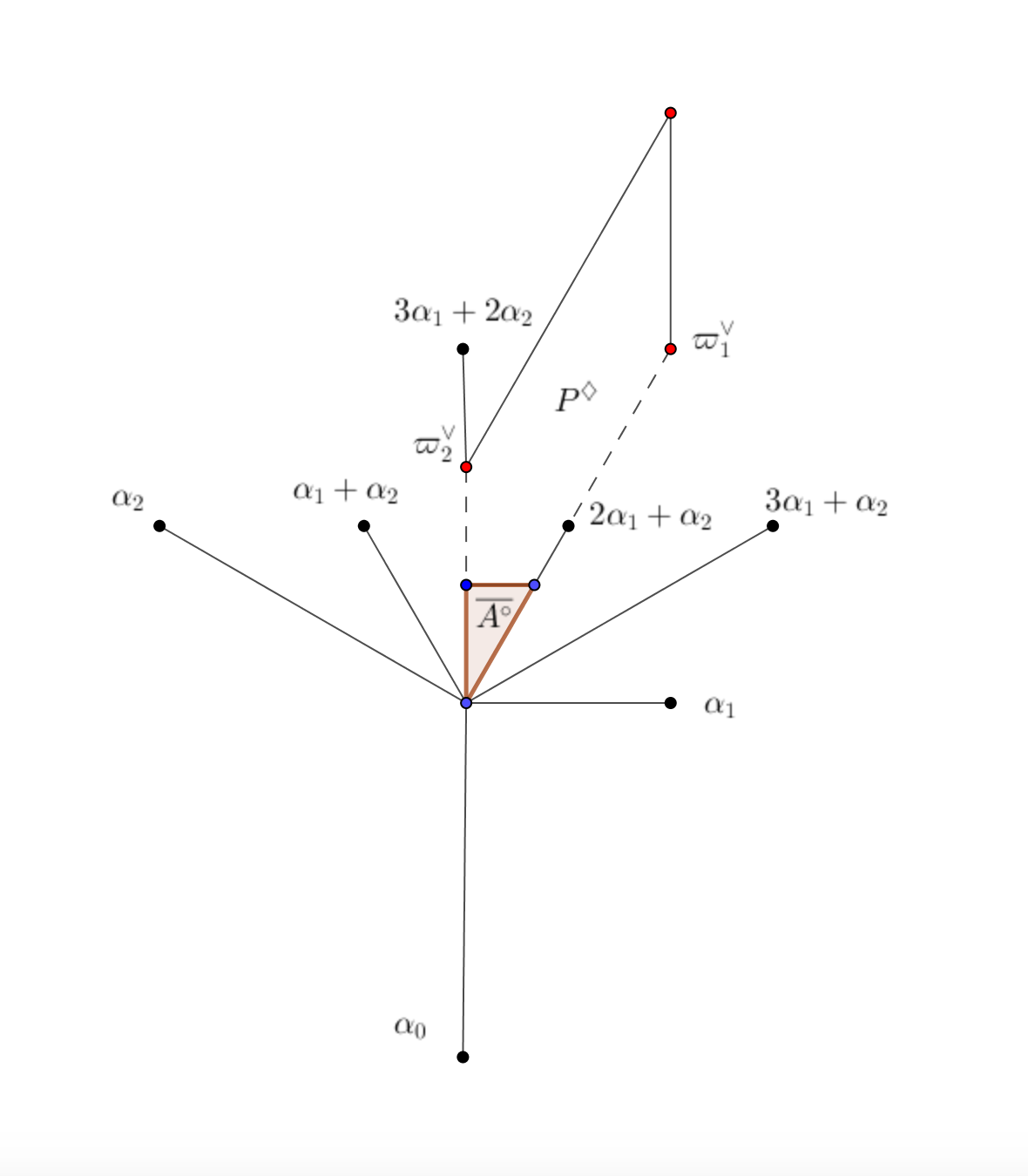}
    \caption{Root system of type $G_2$.}
    \label{fig:G2}
\end{figure}

\begin{figure}[!ht]
\centering
    \includegraphics[width=10cm,height=12cm]{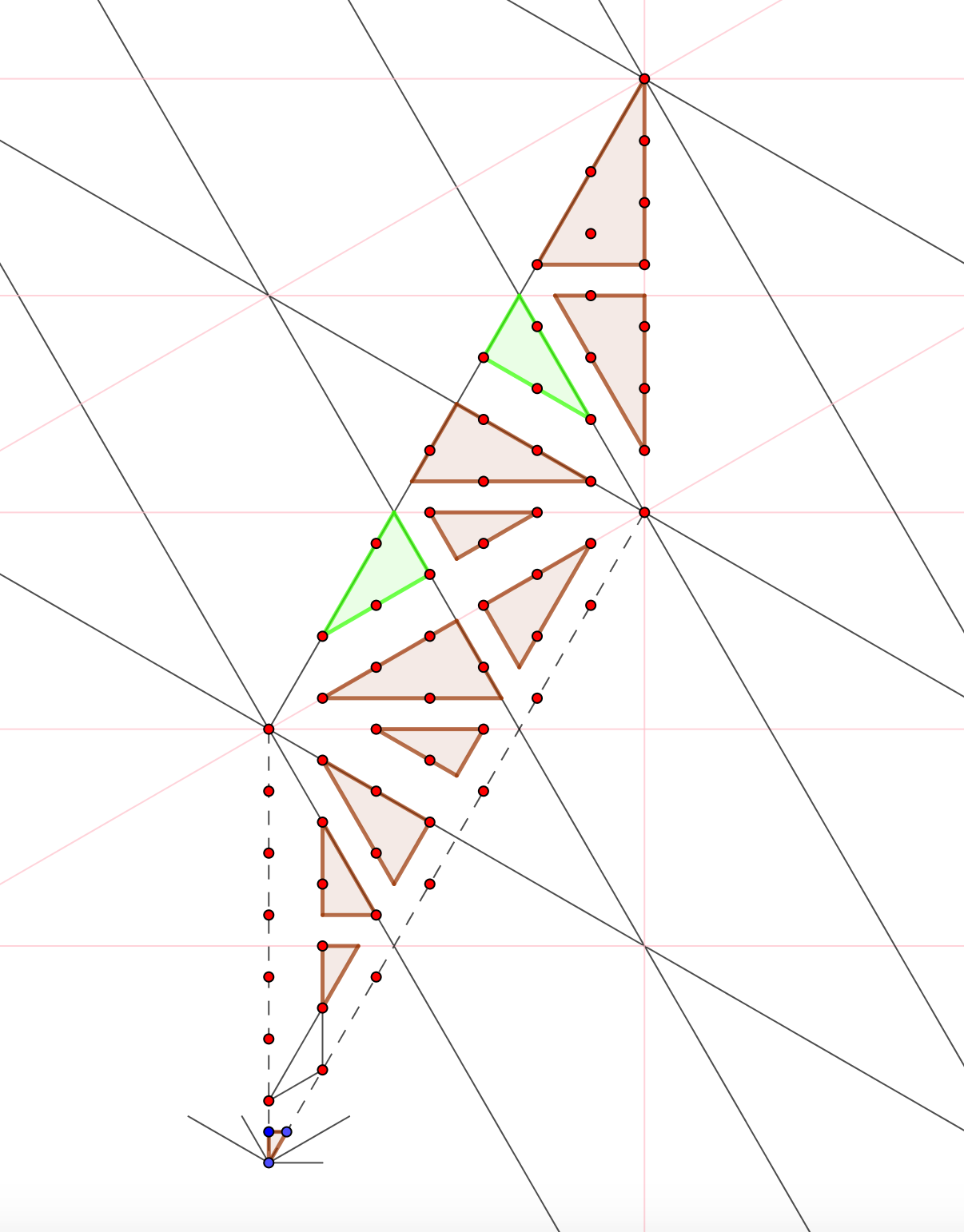}
    \caption{The Worpitzky partition of $qP^\diamondsuit\cap Z(\Phi^\vee)$ in a type $G_2$ root system ($q=7$).}
    \label{fig:q=9}
\end{figure}

 \begin{definition}
 \label{def:shift-operator}   
Let $f: \mathbb{Z} \to \mathbb{C}$ be a function and let $P(S)=􏰇\sum_{k=0}^na_kS^k$ be a polynomial in $S$. 
The \emph{shift operator} via $P(S)$ acting on $f$ is defined by
$$(P(S)f)(t):=\sum_{k=0}^n a_kf(t-k).$$
\end{definition}

 Now let us prove the first main result in the paper. 
\begin{theorem}\label{thm:shift}   
A subset $\Psi \subseteq \Phi^+$ is compatible if and only if for every positive integer $q$,
$$\chi^{\quasi}_{\Psi}(q) = (E_\Psi(S){\rm L}_{\overline{A^\circ}})(q).$$
\end{theorem}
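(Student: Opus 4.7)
The plan is to reduce the identity to an alcove-by-alcove comparison of lattice-point counts, and then to show that compatibility is precisely the condition making each local comparison an equality. By Proposition \ref{prop:bijection} and the Worpitzky partition (Theorem \ref{thm:Worpitzky}),
\[
\chi^{\quasi}_\Psi(q) \;=\; \sum_{i\in [N]} N_i(q), \qquad N_i(q) := \#\Big(qA_i^\diamondsuit \cap Z(\Phi^\vee) \setminus \bigcup_{\alpha\in\Psi,\, k\in\Z} H_{\alpha, kq}\Big).
\]
Writing $A_i^\circ = w(A^\circ) + \gamma$ with $w\in W$ and $\gamma\in Q(\Phi^\vee)$, the $(\ell+1)$ facets of $\overline{A_i^\circ}$ are indexed by $j\in\{0,1,\ldots,\ell\}$: the $j$-th facet lies in $A_i^\diamondsuit$ (is an \emph{upper facet}) exactly when $w(\alpha_j)\in -\Phi^+$, in which case it is supported by a hyperplane $H_{-w(\alpha_j),\,n_j}$; I call such a facet a \emph{$\Psi$-upper facet} when $-w(\alpha_j)\in\Psi$. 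A direct bookkeeping with Definition \ref{def:dsc} gives $\h-\dsc_\Psi(A_i^\circ) = \sum_{j:\, w(\alpha_j)\notin -\Psi^c} c_j$, and this index set is precisely the union of the lower-facet indices and the $\Psi$-upper-facet indices. Since the Weyl action and translation by $q\gamma\in Q(\Phi^\vee)\subseteq Z(\Phi^\vee)$ preserve $Z(\Phi^\vee)$, Proposition \ref{prop:remove} transported from $\overline{A^\circ}$ to $\overline{A_i^\circ}$ yields ${\rm L}_{\overline{A^\circ}}(q-(\h-\dsc_\Psi(A_i^\circ))) = M_i(q)$, where $M_i(q)$ counts points of $qA_i^\diamondsuit \cap Z(\Phi^\vee)$ avoiding every closed $\Psi$-upper facet of $qA_i^\diamondsuit$. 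Combined with Theorem \ref{thm:equivalent} and Definition \ref{def:shift-operator}, the theorem becomes the claim that $\Psi$ is compatible if and only if $N_i(q)=M_i(q)$ for every $i\in[N]$ and every $q\in\Z_{>0}$.

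For the forward direction, if $\Psi$ is compatible then every non-empty intersection $qA_i^\diamondsuit\cap H_{\alpha,kq}$ with $\alpha\in\Psi$ lies inside some $\Psi$-upper facet, while conversely every $\Psi$-upper facet is itself a subset of $\bigcup_{\alpha\in\Psi,\,k}H_{\alpha,kq}$. The two removal sets coincide, so $N_i(q)=M_i(q)$ and summation over $i$ gives the formula. For the converse, I first note the universal bound $N_i(q)\le M_i(q)$: every lattice point on a $\Psi$-upper facet also lies on some $\Psi$-hyperplane, so the removal set for $N_i(q)$ always contains all $\Psi$-upper facets. Now suppose $\Psi$ fails compatibility at some $A_i^\diamondsuit$, witnessed by $\alpha\in\Psi$ and $m\in\Z$ with $A_i^\diamondsuit\cap H_{\alpha,m}$ non-empty but contained in no $\Psi$-upper facet. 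Because $A_i^\circ$ is an alcove, no rational hyperplane slices its interior, so this intersection is a non-empty rational polytope of codimension $\ge 2$; pick a rational point $x_0$ in it. For every $q$ in the arithmetic progression clearing the denominators of the coweight coordinates of $x_0$, the point $qx_0\in Z(\Phi^\vee)$ lies in $qA_i^\diamondsuit\cap H_{\alpha,qm}$, hence is excluded from $N_i(q)$ but counted in $M_i(q)$, giving $N_i(q)<M_i(q)$. Summing forces $\chi^{\quasi}_\Psi(q)<(E_\Psi(S){\rm L}_{\overline{A^\circ}})(q)$ for such $q$, contradicting the hypothesized identity.

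The main technical obstacle lies in the converse: I must guarantee that the codimension-$\ge 2$ ``bad'' intersection produces a genuine integer-point discrepancy for infinitely many $q$. This rests on the rational structure of alcoves, whose vertices sit among $\{0,\varpi^\vee_1/c_1,\ldots,\varpi^\vee_\ell/c_\ell\}$ up to $W_{\rm aff}$-translation, so the required rational witness $x_0$ always exists and a suitable arithmetic progression brings $qx_0$ into $Z(\Phi^\vee)$. The forward direction, by contrast, is a bookkeeping exercise once Proposition \ref{prop:remove} has been transported alcove-by-alcove via the Worpitzky partition.
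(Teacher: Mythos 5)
Your argument is, in structure, the paper's own proof: the Worpitzky/Proposition \ref{prop:bijection} reduction to alcove-wise counts, the transport of Proposition \ref{prop:remove} along $x\mapsto w(x)+q\gamma$ giving $M_i(q)={\rm L}_{\overline{A^\circ}}(q-\h+\dsc_\Psi(A_i^\circ))$ (the paper writes your discrepancy $M_i(q)-N_i(q)$ as $\scR(A_i^\diamondsuit;q)$), the universal bound $N_i(q)\le M_i(q)$ so that equality of the sums forces termwise equality, and a rationality argument for the converse (the paper arranges that the center of mass of a face of $q\overline{A^\circ}$ is a lattice point, e.g.\ at $q=\lcm(1,\ldots,\ell+1)\cdot\lcm(c_1,\ldots,c_\ell)$; your arithmetic progression clearing the denominators of a rational witness is an interchangeable variant). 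However, one step in your converse fails as written. You pick an arbitrary rational point $x_0$ in the bad intersection $G:=A_i^\diamondsuit\cap H_{\alpha,m}$ and assert that $qx_0$ is ``excluded from $N_i(q)$ but counted in $M_i(q)$.'' The second half requires $x_0\notin\bigcup_s K_s$: the failure of compatibility only says that $G$ is contained in no \emph{single} $\Psi$-upper facet $K_s$, and $G$ may well meet several of the $K_s$ (its boundary typically does), in which case $qx_0$ is removed from $M_i(q)$ as well and no discrepancy arises. The repair is short but necessary: since $G\not\subseteq K_s$ for each $s$, every $G\cap K_s$ is a proper closed face of $G$, and a finite union of proper faces cannot cover the relative interior of $G$; so take $x_0$ rational in that relative interior (if $G$ is a vertex, it already avoids every $K_s$). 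This is precisely why the paper phrases its converse in terms of the relative interior of $q(G_j\setminus A)$ rather than of an arbitrary point of $qG_j$.

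A second, smaller omission: your identification $M_i(q)={\rm L}_{\overline{A^\circ}}(q-\h+\dsc_\Psi(A_i^\circ))$ invokes Proposition \ref{prop:remove}, which is valid only when $q$ exceeds the sum of the coefficients $c_j$ of the removed facets, whereas the theorem asserts the formula for \emph{every} positive integer $q$. You need the paper's opening reduction: both sides are quasi-polynomials in $q$ (Theorem \ref{thm:KTT} on the left, a finite sum of shifts of ${\rm L}_{\overline{A^\circ}}$ on the right), so it suffices to verify the identity for $q\ge\h$, and agreement there propagates to all $q>0$. With these two patches your proof is complete and coincides with the paper's.
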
 
\begin{proof} 
The proof of ``$\Rightarrow$" is similar in spirit to \cite[Proof of Theorem 4.8]{Y18W}, 
but requires a more careful analysis of each partially closed alcove in the Worpitzky partition (Theorem \ref{thm:Worpitzky}). 
Let us proceed the proof with $\Psi$ as general as possible to see how the compatibility is crucial. Since both sides are quasi-polynomials, it suffices to prove the formula for $q\ge\h$. 
In which case, $\chi^{\quasi}_{\Psi}(q) \ge \chi^{\quasi}_{\Phi^+}(q) >0$ (Corollary \ref{cor:>0}), and $q-\h+\dsc_\Psi(A_i^\circ)\ge0$ (Lemma \ref{lem:range}) for all $i \in [N]$ whence $(E_\Psi(S){\rm L}_{\overline{A^\circ}})(q)>0$.
Fix $i \in [N]$. 
The intersection of $A_i^\diamondsuit$ and the affine hyperplanes w.r.t. roots in $\Psi$ consists of $a$ facets $(0 \le a \le \ell+1)$, say $K_1,\ldots, K_a$, and $b$ faces of dimension $\le \ell-2$ $(b\ge 0)$, say $G_1,\ldots, G_b$ of $A_i^\diamondsuit$. 
Set $A:= \cup_{s=1}^a K_s$ and $B:=\cup_{j=1}^b G_j$.
The following elementary identity
$$A_i^\diamondsuit \setminus (A \cup B) = (A_i^\diamondsuit  \setminus A) \setminus (B \setminus A)$$
implies that
$$qA_i^\diamondsuit  \cap Z(\Phi^\vee) \setminus  \bigcup_{\mu\in \Psi,k \in \Z}H_{\mu,kq} = 
\left( q(A_i^\diamondsuit \setminus  A) \cap Z(\Phi^\vee)  \right)  
\setminus \left( q(B \setminus A) \cap Z(\Phi^\vee) \right).
$$
Denote $\scR(A_i^\diamondsuit; q) :=\# \left( q(B \setminus A) \cap Z(\Phi^\vee) \right)$.
In other words, $\scR(A_i^\diamondsuit; q)$ is the number of coweight lattice points in (the union of) the non-facets $qG_j$ but not in any facet $qK_s$ of $qA_i^\diamondsuit$. 
By Definition \ref{def:partial}, we may write
\begin{equation}
\label{eq:both}  
 \#\left(qA_i^\diamondsuit  \cap Z(\Phi^\vee) \setminus  \bigcup_{\mu\in \Psi,k \in \Z}H_{\mu,kq}  \right) =  \#  \left\{x \in Z(\Phi^\vee) \middle|
\begin{array}{c}
      (\alpha,x) > qk_\alpha  \,(\alpha \in I )\\
       (\beta,x) < qk_\beta  \,(\beta \in J\cap\Psi)\\
        (\delta,x) \le qk_\delta  \,(\delta \in J\cap\Psi^c)
    \end{array}
\right\} - \scR(A_i^\diamondsuit; q),
\end{equation}
where each facet $qK_s$ is supported by one of the hyperplanes of the form $H_{\beta,qk_\beta}$.

Since $A_i^\circ=w(A^\circ)+\gamma$ for some $w \in W$ and $\gamma \in Z(\Phi^\vee)$ ($\gamma$ is uniquely is determined by $w$, e.g.,  \cite[Theorem 4.9]{H90}), the dilation $qA_i^\circ$ can be written as
\begin{equation*}\label{eq:qA-explain}
qA_i^\circ= \left\{x \in V \middle|
\begin{array}{c}
      (-w(\alpha_0),x) < q((-w(\alpha_0),\gamma)+1),  \\
      (-w(\alpha_i),x) < q(-w(\alpha_i),\gamma),\, (1\le i \le \ell)
    \end{array}
\right\}.
 \end{equation*}
Thus the half-spaces defined by $(\delta,x) \le qk_\delta \,(\delta \in J\cap\Psi^c) $ correspond exactly to the roots $\alpha_i \in \widetilde{\Delta}$ satisfying $-w(\alpha_i) \in \Psi^c$. 
Applying Proposition \ref{prop:remove} and Definition \ref{def:asc-ext}, we obtain
\begin{equation}
\label{eq:remove-facets}  
\#  \left\{x \in Z(\Phi^\vee) \middle|
\begin{array}{c}
      (\alpha,x) > qk_\alpha  \,(\alpha \in I )\\
       (\beta,x) < qk_\beta  \,(\beta \in J\cap\Psi)\\
        (\delta,x) \le qk_\delta  \,(\delta \in J\cap\Psi^c)
    \end{array}
\right\} 
=
 {\rm L}_{\overline{A^\circ}}(q-\h+\dsc_\Psi(A_i^\circ)).
\end{equation}

Moreover, by the Worpitzky partition and Proposition \ref{prop:bijection}, we have
\begin{equation}
\label{eq:applyWorpitzky} 
\chi^{\quasi}_{\Psi}(q)  = \sum_{i \in [N]} \#\left(qA_i^\diamondsuit  \cap Z(\Phi^\vee) \setminus  \bigcup_{\mu\in \Psi,k \in \Z}H_{\mu,kq}  \right).
\end{equation}
Using Theorem \ref{thm:equivalent} together with the shift operator (Definition \ref{def:shift-operator}), we have
\begin{equation}
\label{eq:apply-shift} 
(E_\Psi(S){\rm L}_{\overline{A^\circ}})(q)=\sum_{i \in [N]} {\rm L}_{\overline{A^\circ}}(q-\h+\dsc_\Psi(A_i^\circ)).
\end{equation}

Summarizing Formulas \eqref{eq:both}-\eqref{eq:apply-shift}, it follows that for every $\Psi \subseteq \Phi^+$ we have
$$\chi^{\quasi}_{\Psi}(q)= (E_\Psi(S){\rm L}_{\overline{A^\circ}})(q) - \sum_{i \in [N]} \scR(A_i^\diamondsuit; q).$$
The compatibility of $\Psi$ forces $\scR(A_i^\diamondsuit; q)=0$ for all $i \in [N]$.
This completes the proof of ``$\Rightarrow$".

Now suppose that $\Psi$ is not compatible. Then there exist $A_i^\diamondsuit$ and its non-facet $G_j$ such that $G_j \nsubseteq K_s$ for all facets $K_s$.
To prove ``$\Leftarrow$", it suffices to show that there exists (sufficiently large) $q \in \Z_{>0}$ such that the relative interior of $q(G_j \setminus A)$ contains a coweight lattice point, in which case, $\scR(A_i^\diamondsuit; q)>0$. 
Since $W_{\rm aff}$ preserves the lattice points of $qG_j$, we can transform the problem to (the relative interior of) a face of $q\overline{A^\circ}$. 
Since a face of $q\overline{A^\circ}$ is the convex hull of a nonempty subset of $\{0,q\varpi^\vee_1/c_1 ,\ldots,q\varpi^\vee_\ell/c_\ell\}$, the relative interior of that face contains a lattice point for sufficiently large $q$ (e.g., the center of mass of any face is a lattice point when $q = \lcm(1,\ldots, \ell+1)\cdot\lcm(c_1,\ldots, c_\ell)$).
The conclusion follows.
 \end{proof}
 
  \begin{remark}
\label{rem:non-com}
 \begin{enumerate}[(a)]
\item 
When $\Psi$ is not compatible, although $\chi^{\quasi}_{\Psi}(q)$ and  $(E_\Psi(S){\rm L}_{\overline{A^\circ}})(q)$ are never equal as quasi-polynomials, the equality may occur for particular constituents (see Example \ref{ex:yes-no}(b)).
Proof of Theorem \ref{thm:shift} hints that in order to find a formula of the characteristic quasi-polynomial of a non-compatible subset, we need to count the lattices points in $d$-simplices with $d \le \ell$. 
\item When the rank of the root system is at most $3$, it is possible to examine the (non-)compatibility by using the pictures. 
The drawing is rather difficult in higher-dimensional cases, in this regard, Theorem \ref{thm:shift}  is useful to check the (non-)compatibility (see Example \ref{ex:non-comp}).
\end{enumerate}
\end{remark}
  
  \begin{definition}
\label{def:ideal}
A subset $\Psi\subseteq\Phi^+$ is an \emph{ideal} of $\Phi^+$ (or a lower set of the root poset $( \Phi^+, \ge)$) if for $\beta_1,\beta_2 \in \Phi^+$, $\beta_1 \ge \beta_2, \beta_ 1 \in \Psi$ implies $\beta_2 \in \Psi$.
\end{definition}

Our second main result in the paper is: every ideal is compatible.
We remark that if $\ell \ge 2$ and $\delta \in \Phi^+$, then the compatible subset $\Phi^+ \setminus \{\delta\}$ is not an ideal unless $\delta$ is the highest root. 
We recall some basic facts on root systems.

For any $M \subseteq \Phi$, set $\Phi_M(\F) := \Phi \cap \F M$ where $\F= \Z, \Q$ or $\R$. 
Note that $\Phi_M(\Z) \subseteq \Phi_M(\Q)=\Phi_M(\R)$ and these root subsystems (in the real vector space that they span) have the same rank. 
In particular, if $M \subseteq \Delta$, then these three subsystems are identical.
 \begin{lemma}
\label{lem:sommers}
Let $\scJ \subsetneq \widetilde{\Delta}$. 
Then ${\scJ}$ is a base for $\Phi_{\scJ}(\Z)$.
 \end{lemma}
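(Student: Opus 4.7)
My plan is to split into two cases according to whether $\alpha_0$ belongs to $\scJ$ or not, and to leverage two key facts about the root system: the affine relation $c_0\alpha_0 + c_1\alpha_1 + \cdots + c_\ell\alpha_\ell = 0$ is (up to scalar) the unique linear dependence among $\widetilde{\Delta}$, and the classical bound $|d_j| \le c_j$ on the $\Delta$-coefficients of any root $\beta = \sum_{j=1}^\ell d_j\alpha_j$, which follows from the fact that $\tilde{\alpha}$ is the unique maximal element of the root poset.

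Linear independence of $\scJ$ is immediate: since $\scJ$ is a proper subset of $\widetilde{\Delta}$ and the only (nontrivial) linear relation in $\widetilde{\Delta}$ has all coefficients $c_i$ nonzero, $\scJ$ cannot support a nontrivial relation. This shows that $\scJ$ is a basis of the real span $\R\scJ$, and since $\scJ \subseteq \Phi_\scJ(\Z) \subseteq \Z\scJ$, the rank of $\Phi_\scJ(\Z)$ as a root subsystem equals $\#\scJ$.

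For the ``base'' property, the case $\alpha_0 \notin \scJ$ reduces to $\scJ \subseteq \Delta$, where the claim follows at once from $\Delta$ being a base of $\Phi$: for any $\beta = \sum d_j\alpha_j \in \Phi \cap \Z\scJ$, the coefficients $d_j$ vanish for $\alpha_j \notin \scJ$ and have uniform sign. So assume $\alpha_0 \in \scJ$, and write $\scJ = \{\alpha_0\} \sqcup \{\alpha_i : i \in I\}$ with $I \subsetneq \{1,\ldots,\ell\}$; pick $i_0 \in \{1,\ldots,\ell\} \setminus I$. Given $\beta \in \Phi \cap \Z\scJ$, expand $\beta = n_0\alpha_0 + \sum_{i \in I}n_i\alpha_i$ and substitute $\alpha_0 = -\sum_{j=1}^\ell c_j\alpha_j$ to obtain, in the $\Delta$-basis, $d_{i_0} = -n_0 c_{i_0}$, $d_j = -n_0 c_j$ for $j \notin I \cup \{0,i_0\}$, and $d_i = n_i - n_0 c_i$ for $i \in I$.

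The main step is now to exploit $|d_j| \le c_j$. From $d_{i_0} = -n_0 c_{i_0}$ this forces $n_0 \in \{-1,0,1\}$, with sign opposite to the common sign of the $d_j$'s. If $\beta$ is a positive root (all $d_j \ge 0$) and $n_0 = 0$, then $n_i = d_i \ge 0$; if $n_0 = -1$, then $n_i = d_i - c_i \le 0$ since $d_i \le c_i$. The negative-root case is symmetric. In every subcase the $\scJ$-coefficients of $\beta$ have uniform sign, which is precisely what is needed for $\scJ$ to be a base of $\Phi_\scJ(\Z)$. I expect the only subtle point to be the invocation of the bound $d_i \le c_i$ at the critical moment $n_0 = \pm 1$; everything else is bookkeeping.
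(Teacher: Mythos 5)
Your proof is correct, but it does not follow the paper, because the paper gives no argument of its own: its entire ``proof'' of Lemma \ref{lem:sommers} is a pointer to \cite[Lemma 4.3]{Sommers97}. Your argument is a sound, self-contained replacement, and both of its pillars hold under the paper's standing hypotheses. For linear independence, the space of dependences among the $\ell+1$ vectors of $\widetilde{\Delta}$ is one-dimensional, spanned by $(c_0,c_1,\ldots,c_\ell)$ with every $c_i>0$ (this uses irreducibility), so no proper subset supports a nontrivial relation; in particular any $\beta\in\Phi_{\scJ}(\Z)=\Phi\cap\Z\scJ$ has a unique $\scJ$-expansion, automatically with integer coefficients. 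For the uniform-sign property, the case $\scJ\subseteq\Delta$ is immediate, and in the case $\alpha_0\in\scJ$ your bookkeeping checks out: substituting $\alpha_0=-\sum_{j=1}^{\ell}c_j\alpha_j$ gives $d_{i_0}=-n_0c_{i_0}$ at any index $i_0\in\{1,\ldots,\ell\}\setminus I$ (such an $i_0$ exists precisely because $\scJ\subsetneq\widetilde{\Delta}$), the classical bound $0\le d_j\le c_j$ for positive roots (maximality of $\tilde{\alpha}$ in $(\Phi^+,\ge)$, again via irreducibility) forces $n_0\in\{-1,0,1\}$, and the two subcases $n_0=0$ (all $n_i=d_i\ge 0$) and $n_0=-1$ (all $n_i=d_i-c_i\le 0$) exhaust the possibilities for $\beta\in\Phi^+$, with negative roots handled by symmetry; this verifies exactly the definition of a base (linearly independent subset of the subsystem with uniform-sign integral expansions). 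As for what each route buys: the citation keeps the paper lean, whereas your proof makes the section self-contained and transparent about what is used --- only the affine relation and the coefficient bound from the highest root. It is also worth noting that your technique is close in spirit to the paper's own later argument in the proof of Theorem \ref{thm:ideal-com}, where the Claim is likewise settled by splitting on whether $\alpha_0$ occurs in $\scJ$ and manipulating the relation $d_\alpha\alpha'+\sum_{\alpha_j\in\scJ}d_{\alpha_j}\alpha_j=0$ to force a coefficient into $\{-1,1\}$; so your proof fits the paper's toolkit naturally.
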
 
\begin{proof} 
See, e.g., \cite[Lemma 4.3]{Sommers97}. 
\end{proof}

 \begin{lemma}
\label{lem:w-sommers}
For $w\in W$ and ${\scJ} \subsetneq \widetilde{\Delta}$, define $D := -w({\scJ}) = \{ -w(\alpha) \mid \alpha \in {\scJ}\} \subseteq \Phi$. 
Then $D$ is a base for $\Phi_{D}(\Z)$.
 \end{lemma}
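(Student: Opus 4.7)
The plan is to transport Lemma~\ref{lem:sommers} along the Weyl group action. The key point is that $w\in W$ acts as a linear isomorphism on $V$ permuting $\Phi$ and preserving the root lattice $Q(\Phi)$, so it commutes naturally with the construction $M \mapsto \Phi_M(\Z) = \Phi \cap \Z M$.

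First, I would observe that, because $-w$ is a linear isomorphism and $\Z\scJ$ is closed under $\Z$-scaling, we have $\Z D = \Z(-w(\scJ)) = -w(\Z\scJ) = w(\Z\scJ)$. Combining this with $w(\Phi)=\Phi$ gives
\begin{equation*}
\Phi_D(\Z) \;=\; \Phi\cap w(\Z\scJ) \;=\; w\bigl(\Phi\cap\Z\scJ\bigr) \;=\; w\bigl(\Phi_{\scJ}(\Z)\bigr),
\end{equation*}
and, since $\Phi_{\scJ}(\Z)=-\Phi_{\scJ}(\Z)$, this also equals $-w(\Phi_{\scJ}(\Z))$.

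Next I would invoke Lemma~\ref{lem:sommers} to obtain that $\scJ$ is a base for $\Phi_{\scJ}(\Z)$: the elements of $\scJ$ are $\R$-linearly independent, and every $\alpha' \in \Phi_{\scJ}(\Z)$ admits a unique expansion $\alpha' = \sum_{\beta \in \scJ} n_\beta\beta$ with all $n_\beta \in \Z$ of the same sign. Transporting by the linear isomorphism $-w$, the set $D = -w(\scJ)$ is $\R$-linearly independent. Moreover, given any $\alpha \in \Phi_D(\Z)$, the first step produces a unique $\alpha' \in \Phi_{\scJ}(\Z)$ with $\alpha = -w(\alpha')$, and expanding $\alpha'$ yields
\begin{equation*}
\alpha \;=\; -w\!\left(\sum_{\beta\in\scJ} n_\beta\beta\right) \;=\; \sum_{\beta\in\scJ} n_\beta\,(-w(\beta)) \;=\; \sum_{\gamma\in D} m_\gamma\,\gamma,
\end{equation*}
where $m_{-w(\beta)}=n_\beta$. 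The coefficients $m_\gamma$ are integers of uniform sign, so $D$ satisfies the defining properties of a base for $\Phi_D(\Z)$.

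There is no real obstacle here; the argument is essentially bookkeeping once one notices that all three of the operations ``intersect with $\Phi$'', ``take $\Z$-span'', and ``change sign'' commute with the action of $w$. The only point requiring mild care is to track signs (because of the $-w$ rather than $w$) and to make sure the conclusion is stated at the $\Z$-level rather than the $\R$-level, which is automatic since $w$ preserves $Q(\Phi)\supseteq \Z\scJ$.
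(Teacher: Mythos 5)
Your proof is correct and takes essentially the same approach as the paper's: both transport the question along the linear, $\Phi$-preserving map $-w$ and then invoke Lemma~\ref{lem:sommers} for $\scJ \subsetneq \widetilde{\Delta}$, matching coefficients term by term. The only cosmetic difference is in how linear independence of $D$ is justified (you use that $-w$ is a linear isomorphism; the paper observes that $w$ preserves inner products, hence the Gram matrix), which are interchangeable observations.
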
 
\begin{proof} 
$D$ is linearly independent because $(-w(\alpha_j), -w(\alpha_k)) = (\alpha_j, \alpha_k)$ for all $\alpha_j, \alpha_k \in {\scJ}$. 
 Let $\gamma \in  \Phi_{D}(\Z)$.
 We can write $\gamma  = \sum_{\delta \in D} d_\delta\delta$ where $d_\delta \in \Z$. 
 We need to show that all $d_\delta \ge0$ or all $d_\delta \le0$. 
 Note that $\scJ$ and $D$ are isomorphic via $w$.
 Rewriting, we have $\gamma  = -w(\sum_{\alpha \in {\scJ}} d_{-w(\alpha)}\alpha)$. 
Thus, $\sum_{\alpha \in {\scJ}} d_{-w(\alpha)}\alpha \in   \Phi_{{\scJ}}(\Z)$.
Lemma \ref{lem:sommers} completes the proof.
 \end{proof}

 \begin{theorem}
\label{thm:ideal-com}
 If $\Psi\subseteq\Phi^+$ is an ideal, then $\Psi$ is compatible.
 \end{theorem}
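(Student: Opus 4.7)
The plan is to verify Definition \ref{def:compatibleW} directly for an ideal $\Psi$. Fix a partial closure $A_i^\diamondsuit \subseteq P^\diamondsuit$, write $A_i^\circ = w(A^\circ) + \gamma$ with $w \in W$ and $\gamma \in Q(\Phi^\vee) \subseteq Z(\Phi^\vee)$, and set $\scJ := \{j \in \{0,1,\ldots,\ell\} : w(\alpha_j) \in -\Phi^+\}$. Then the facets of $\overline{A_i^\circ}$ retained by $A_i^\diamondsuit$ are supported by the hyperplanes $H_{\beta_j, k_{\beta_j}}$ for $j \in \scJ$, with $\beta_j := -w(\alpha_j) \in J$ and $k_{\beta_j} \in \Z$, and one has $\scJ \subsetneq \widetilde{\Delta}$. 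Suppose $F := A_i^\diamondsuit \cap H_{\alpha, m_\alpha}$ is non-empty but is not a facet of $A_i^\diamondsuit$, with $\alpha \in \Psi$ and $m_\alpha \in \Z$. Since $A_i^\circ$ meets no such hyperplane, $F$ is a face of the simplex $\overline{A_i^\circ}$ of codimension $\ge 2$, and I need to produce some $\beta \in \Psi$ for which $A_i^\diamondsuit \cap H_{\beta, k_\beta}$ is a facet of $A_i^\diamondsuit$ containing $F$.

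First I will pull back to the fundamental alcove via $T(x) := w^{-1}(x-\gamma)$, so that $F' := T(F)$ is a face of $\overline{A^\circ}$ of codimension $\ge 2$ contained only in facets indexed by some $S \subseteq \scJ$ with $|S| \le \ell$. Thus $F' = \mathrm{conv}\{v_i : i \in \{0,\ldots,\ell\}\setminus S\}$, where $v_0 := 0$ and $v_i := \varpi_i^\vee/c_i$ for $i \ge 1$. Setting $\alpha' := w^{-1}(\alpha) \in \Phi$ and $m' := m_\alpha - (\alpha,\gamma) \in \Z$, we have $F' \subseteq H_{\alpha', m'}$. Writing $\alpha' = \sum_{i=1}^\ell a_i\alpha_i$ with $a_i = (\alpha', \varpi_i^\vee) \in \Z$ and evaluating $(\alpha', v_i) = m'$ at each vertex of $F'$: the case $0 \notin S$ gives $m' = 0$ and $a_i = 0$ for $i \notin S$, while the case $0 \in S$ gives $a_i = m'c_i$ for $i \notin S$. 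In the second case, substituting the unique linear relation $\alpha_0 + \sum_{i=1}^\ell c_i\alpha_i = 0$ rewrites $\alpha'$ as $(-m')\alpha_0 + \sum_{i \in S \setminus \{0\}}(a_i - m'c_i)\alpha_i$. Either way, $\alpha' \in \Z\{\alpha_j : j \in S\}$, i.e., $\alpha' \in \Phi_S(\Z)$.

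Now Lemma \ref{lem:sommers} applies to $S \subsetneq \widetilde{\Delta}$, yielding $\alpha' = \sum_{j \in S} b_j \alpha_j$ with all $b_j \in \Z_{\ge 0}$ or all $b_j \in \Z_{\le 0}$. Applying $w$ and using $\beta_j = -w(\alpha_j) \in \Phi^+$ for $j \in S \subseteq \scJ$ gives $\alpha = \sum_{j \in S}(-b_j)\beta_j$. If all $b_j \ge 0$, then $\alpha$ would be a non-positive $\Z$-combination of simple roots, contradicting $\alpha \in \Phi^+$; so all $b_j \le 0$ and some $b_{j_0} \le -1$. The identity $\alpha - \beta_{j_0} = (-b_{j_0}-1)\beta_{j_0} + \sum_{j \ne j_0}(-b_j)\beta_j$ exhibits $\alpha - \beta_{j_0}$ as a non-negative $\Z$-combination of positive roots, hence of simple roots; thus $\beta_{j_0} \le \alpha$ in the root poset. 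Since $\Psi$ is an ideal containing $\alpha$, we conclude $\beta_{j_0} \in \Psi$, and $A_i^\diamondsuit \cap H_{\beta_{j_0}, k_{\beta_{j_0}}}$ is then a facet of $A_i^\diamondsuit$ containing $F$, establishing compatibility.

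I expect the main obstacle to be the integrality step $\alpha' \in \Phi_S(\Z)$: it is not a formal consequence of $\alpha' \in \Phi \cap \R\{\alpha_j : j \in S\}$, because the inclusion $\Phi_S(\Z) \subseteq \Phi_S(\R)$ can be strict when $0 \in S$ and some $c_i > 1$ (as in $G_2$, where the vertex $\varpi_1^\vee/c_1$ lies in $\R\{\alpha_0, \alpha_2\}$ but is not on $H_{\alpha_1, m}$ for any $m \in \Z$). It is precisely the integrality of $m'$, together with the single linear relation on $\widetilde{\Delta}$, that forces the coefficients of $\alpha'$ into $\Z$ and so unlocks Sommers's lemma; the remaining sign analysis and the ideal property then finish the argument routinely.
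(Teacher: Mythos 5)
Your proof is correct, and while it shares the paper's overall skeleton (fix a cell $A_i^\diamondsuit$ of the Worpitzky partition, show the root $\alpha$ cutting out a non-facet face is a nonnegative integer combination of the positive roots $\beta_j$ supporting the facets via Sommers's lemma, then invoke the ideal property), it replaces the technical heart of the paper's argument with a genuinely different and cleaner device. The paper establishes the key integrality statement ($\alpha \in \Phi_D(\Z)$, its ``Claim'') indirectly: it takes a gcd-normalized integer dependency $d_\alpha\alpha + \sum_j d_{\beta_j}\beta_j = 0$ and shows $d_\alpha \in \{-1,1\}$ by a two-case analysis on whether $\alpha_0 \in \scJ$, the delicate case requiring the inner-product trick of pairing the relation against a point $x \in \overline{A^\circ} \cap H_{\alpha',1}$ to force $d_\alpha = d_{\alpha_0}$ before regrouping. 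You instead obtain integrality in one stroke by evaluating $\alpha'$ against the vertices $\varpi_i^\vee/c_i$ of the pulled-back face, reading off the coefficients $a_i = m'c_i$ (or $a_i = 0$ when $0 \notin S$) for $i \notin S$ directly from the integrality of the level $m' = m_\alpha - (\alpha,\gamma)$, and then eliminating via the single relation $\alpha_0 + \sum_i c_i\alpha_i = 0$; your closing remark correctly identifies this as exactly the point where $\Phi_S(\Z) \subsetneq \Phi_S(\R)$ could otherwise derail the argument (the paper's Fact 2 gives only $\alpha \in \Phi_D(\Q)$ for the same reason). A further structural difference: you apply only Lemma \ref{lem:sommers} on the $\overline{A^\circ}$ side and transport by $w$ at the end, so you never need Lemma \ref{lem:w-sommers}; what the paper's formulation buys in exchange is a reusable base statement for $D = -w(\scJ)$ that does not reference the vertex structure of the face. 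Two harmless imprecisions worth noting: $F' $ is only a partially closed face, so strictly you should evaluate $\alpha'$ on the closure $\overline{F'} = \mathrm{conv}\{v_i \mid i \notin S\}$ (legitimate since $H_{\alpha',m'}$ is closed and $F'$ contains the relative interior of that face), and your concluding step needs only one index $j_0$ with $-b_{j_0} \ge 1$, so you can skip the paper's observation that at least two coefficients are positive.
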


\begin{proof}  
 If $A_i^\diamondsuit \cap H_{\alpha, m_\alpha}$ is a non-empty face but not a facet of $A_i^\diamondsuit$ for $i \in [N]$, $\alpha\in \Psi,m_\alpha \in \Z$, then we can write $A_i^\diamondsuit \cap H_{\alpha, m_\alpha}= \cap_{j=1}^n H_{\beta_j, m_{\beta_j}}\cap  A_i^\diamondsuit$, where $H_{\beta_j, m_{\beta_j}}$ ($\beta_j \in \Phi^+\setminus \{\alpha\}$) are the supporting hyperplanes of the facets of $A_i^\diamondsuit$ for all $1 \le j \le n$ with $2 \le n \le \ell$. 
 Set $D:=\{\beta_j \mid 1 \le j \le n\}$. 
  We need to show that there is $\beta_k \in \Psi \cap D$. 

 We have the following facts.
\begin{enumerate}
\item[Fact 1.]   $D$ is a base for $\Phi_{D}(\Z)$. 
Let $w\in W$ so that $A_i^\circ=w(A^\circ)+\gamma_w$ ($\gamma_w \in Z(\Phi^\vee)$).
 It follows from the definition of $A_i^\diamondsuit$ that there exists ${\scJ} \subsetneq \widetilde{\Delta}$, such that $D = -w({\scJ})$. 
 The rest follows from Lemma \ref{lem:w-sommers}.

\item[Fact 2.]  $\alpha \in\Phi_{D}(\Q)$. Choose $z_0 \in A_i^\diamondsuit \cap H_{\alpha, m_\alpha}$, the translation $t_{z_o}: V \to V$ via $t_{z_o}(x) = -z_0+x$ yields $-z_0 + {\rm{affine hull}}(A_i^\diamondsuit \cap H_{\alpha, m_\alpha})= \cap_{j=1}^n H_{\beta_j,0} \subseteq H_{\alpha, 0}$. 
Thus, $\alpha \in \Phi_D(\R)=\Phi_D(\Q)$.
\end{enumerate}
\textbf{Claim.} $\alpha \in\Phi_{D}(\Z)$. 
By Fact 2, $D \cup\{\alpha\}$ is linearly dependent. 
Then there exist integers $d_\alpha, d_{\beta_1}, \ldots, d_{\beta_n}\in\Z$ not all zero with $d_\alpha\ne0$, $\gcd\{|d_\alpha|,| d_{\beta_1}|, \ldots, |d_{\beta_n}|\}=1$ such that $d_\alpha\alpha + \sum_{j=1}^n d_{\beta_j}\beta_j=0$ (see, e.g., \cite[Lemma 17]{BV07}). 
The claim is proved once we prove $d_\alpha \in \{-1,1\}$. 

\noindent
Let us transform the situation in $A_i^\diamondsuit$ to {\color{blue!60}{$\overline{A^\circ}$}}.
Set  $\alpha := -w(\alpha')$ with $\alpha'\in\Phi$. 
We can rewrite the relation above as 
\begin{equation}
\label{eq:relation} 
d_\alpha\alpha' + \sum_{\alpha_j \in \scJ} d_{\alpha_j}\alpha_j=0,
\end{equation}
where $d_{\alpha_j} := d_{\beta}$ with $\beta \in D$ such that $\beta = -w(\alpha_j) $.
We consider two cases.
\begin{enumerate}
\item[Case 1.]   
$\scJ \subsetneq \Delta$. 
Since $\alpha' \in \Z\Delta$, the linearly independence of $\Delta$ and Relation \eqref{eq:relation}   yield $\alpha' \in \Z \scJ$. 
Additionally, $d_\alpha$ divides $d_{\alpha_j}$ for all $\alpha_j \in \scJ$. 
Thus, $d_\alpha \in \{-1,1\}$. 

\item[Case 2.]  
${\scJ}={\I}\cup\{\alpha_0\}$ with $\emptyset \ne {\I} \subsetneq \Delta$. 
If $d_{\alpha_0}=0$, then the same argument as in Case 1 yields $d_\alpha \in \{-1,1\}$. 
Assume that $d_{\alpha_0}\ne0$ and $\alpha'\in\Phi^+$. 
The transformation by $w$ implies that
\begin{equation*}
\label{eq:relation-intersect} 
\overline{A^\circ} \cap H_{\alpha', 1}= \cap_{\alpha_j \in {\I}} H_{\alpha_j, 0}\cap H_{\alpha_0, -1}\cap  \overline{A^\circ} \subseteq \overline{P^\diamondsuit}.
\end{equation*}
\end{enumerate}
Let $x = \sum_{\alpha_s \in\Delta} r_{\alpha_s} \varpi^\vee_{\alpha_s} \in \overline{P^\diamondsuit}$ ($r_{\alpha_s} \in [0,1]$) be a typical element in both sets. 
Since $x\in H_{\alpha_j, 0}$ for all $\alpha_j \in {\I}$, we can write $x=  \sum_{\alpha_s \in\Delta \setminus \I} r_{\alpha_s} \varpi^\vee_{\alpha_s}$. 
Thus, 
$$(x, -\sum_{\alpha_j \in \scJ} d_{\alpha_j}\alpha_j)= (x, -d_{\alpha_0}\alpha_0)=d_{\alpha_0}.$$
It follows from  Relation \eqref{eq:relation} and the above equation  that
$$d_\alpha = (x, d_\alpha\alpha') = (x, -\sum_{\alpha_j \in \scJ} d_{\alpha_j}\alpha_j)=d_{\alpha_0}.$$
Thus we can rewrite Relation \eqref{eq:relation}   as
$$
d_\alpha(\alpha' +\alpha_0)+ \sum_{\alpha_j \in {\I}} d_{\alpha_j}\alpha_j=0.
$$
Note that $\alpha' +\alpha_0 =-w^{-1}(\alpha+\beta') \ne0$ where $\beta':=-w(\alpha_0) \in D$.
A similar argument as in Case 1 yields $d_\alpha \in \{-1,1\}$. 
Repeat for $-\alpha'$ if $\alpha'\in -\Phi^+$. 
 
By Fact 1 and \textbf{Claim}, we can write $\alpha = \sum_{j=1}^np_j\beta_j$ with at least two $p_{k_1}\ge 1$, $p_{k_2}\ge1$. 
Thus, there is $\beta_k \in D$ such that $\alpha \ge_{\Phi^+} \beta_k$. 
Since $\Psi$ is an ideal, we must have $\beta_k \in \Psi$. 
This completes the proof.
 \end{proof}  

  \begin{remark}
\label{rem:ideal-compare}
There exists a compatible subset $\Psi$ that is not an ideal w.r.t. any positive system of $\Phi$ (see Example \ref{ex:yes-no}(d)).
The characteristic quasi-polynomial  $\chi_{\Psi}^{\mathrm{quasi}}(q)$ when $\Psi$ is an ideal and $\Phi$  is of classical type has been computed by using information of the signed graph associated with $\Psi$  \cite{T19}. 
It would be interesting to compare the mentioned computation with Theorem \ref{thm:shift}.
\end{remark}

 \begin{example}
\label{ex:yes-no}
Let $\Phi=G_2$ as in Figure \ref{fig:G2}. 
 \begin{enumerate}[(a)]
\item 
Let $\Psi=\Phi^+ \setminus  \{ \tilde{\alpha}\}$. 
We can see that $\Psi$ is compatible either by Example \ref{ex:compatibleW}(a) or by Theorem \ref{thm:ideal-com}. By definition of  characteristic quasi-polynomial (Theorem \ref{thm:KTT}),
\begin{align*}
\chi^{\quasi}_{\Psi}(q) & = \#\left\{ \textbf{z} \in  \Z_q^2 \mid z_1,z_2,z_1+z_2, 2z_1+z_2, 3z_1+z_2\ne \overline{0 } \right\} \\
& = \begin{cases}
(q-1)(q-4) & \mbox{if $q \equiv 1,5 \bmod 6$}, \\
(q-2)(q-3) & \mbox{if $q \equiv 2,3,4 \bmod 6$}, \\
q^2-5q+8  & \mbox{if $q \equiv 0 \bmod 6$}.
\end{cases}
\end{align*}
Note that the Ehrhart quasi-polynomial of the fundamental alcove (w.r.t. the coweight lattice) of every root system has been completely computed, e.g., in \cite{Su98}. 
$$
{\rm L}_{\overline{A^\circ}}(q)
= \begin{cases}
\frac{1}{12}(q+1)(q+5) & \mbox{if $q \equiv 1,5 \bmod 6$}, \\
\frac{1}{12}(q+2)(q+4) & \mbox{if $q \equiv 2,4 \bmod 6$}, \\
\frac{1}{12}(q+3)^2 & \mbox{if $q \equiv 3 \bmod 6$}, \\
\frac{1}{12}(q^2+6q+12)  & \mbox{if $q \equiv 0 \bmod 6$}.
\end{cases}
$$
Moreover, from Table \ref{tab:compute}, we have $E_\Psi(t)=  8t^6 + 2t^5+2t^4$. Thus,
$$
(E_\Psi(S){\rm L}_{\overline{A^\circ}})(q) = 8{\rm L}_{\overline{A^\circ}}(q-6)+2{\rm L}_{\overline{A^\circ}}(q-5)+2{\rm L}_{\overline{A^\circ}}(q-4),
$$
which coincides with the computation of $\chi^{\quasi}_{\Psi}(q)$ above, and justifies Theorem \ref{thm:shift}.
\begin{table}[htbp]
\centering
{\renewcommand\arraystretch{1.5} 
\begin{tabular}{|c|c|}
\hline
$\dsc_\Psi(w)$ & $w$  \\
\hline\hline
$2$ &  $s_2s_1s_2, (s_1s_2)^2$  \\
\hline
$1$ & $1, s_1$ \\
\hline
$0$ &  $s_2,s_1s_2, s_2s_1, s_1s_2s_1, (s_2s_1)^2, s_1(s_2s_1)^2, s_2(s_1s_2)^2, (s_2s_1)^3$  \\
\hline
\end{tabular}
}
\bigskip
\caption{Computation of $\dsc_\Psi(W)$ when $\Psi=\Phi^+(G_2) \setminus  \{ 3\alpha_1+2\alpha_2\}$. Here $s_1,s_2$ are the reflections w.r.t. $\alpha_1, \alpha_2$, respectively.}
\label{tab:compute}
\end{table}
 \item 
Let $\Psi' = \{\alpha_1, \alpha_1+\alpha_2, 3\alpha_1+2\alpha_2\}$. Then $\Psi'$ is not compatible by Example \ref{ex:compatibleW}(b). We may compute
\begin{align*}
\chi^{\quasi}_{\Psi'}(q) & = \begin{cases}
(q-1)(q-2) & \mbox{if $q \equiv 1 \bmod 2$}, \\
q^2-3q+3 & \mbox{if $q \equiv 0 \bmod 2$}.
\end{cases}
\\
E_{\Psi'}(t) & = 5t^6 + t^5+2t^4+ 3t^3 + t^2,
\\
(E_{\Psi'}(S){\rm L}_{\overline{A^\circ}})(q) & = 
\begin{cases}
(q-1)(q-2) & \mbox{if $q \equiv 1,5 \bmod 6$}, \\
q^2-3q+3  & \mbox{if $q \equiv 2,4 \bmod 6$}, \\
q^2-3q+4 & \mbox{if $q \equiv 3 \bmod 6$}, \\
q^2-3q+5  & \mbox{if $q \equiv 0 \bmod 6$}.
\end{cases}
\end{align*}
Thus $\chi^{\quasi}_{\Psi'}(q)  = (E_{\Psi'}(S){\rm L}_{\overline{A^\circ}})(q)$ if $q \equiv 1,2,4, 5 \bmod 6$, and $\chi^{\quasi}_{\Psi'}(q)  < (E_{\Psi'}(S){\rm L}_{\overline{A^\circ}})(q)$ otherwise. 
In particular, we can easily check on Figure \ref{fig:q=9} that $\chi^{\quasi}_{\Psi'}(7)  = (E_{\Psi'}(S){\rm L}_{\overline{A^\circ}})(7)=30$. 
 \item 
 In addition to the example above, both sides of the formula in Theorem \ref{thm:shift} have different values at each constituent when $\Psi$ is not compatible.
For example, let $\Psi_1 = \{ \alpha_1+\alpha_2\}$ as one of the non-compatible singleton subsets mentioned in Example \ref{ex:compatibleW}(b).
Then
\begin{align*}
\chi^{\quasi}_{\Psi_1}(q) & = q(q-1) \mbox{ for all } q, \\
E_{\Psi_1}(t) & = t^6 + 2t^5+3t^4+ 3t^3 + 2t^2+t,
\\
(E_{\Psi_1}(S){\rm L}_{\overline{A^\circ}})(q) & = q(q-1) +1\mbox{ for all } q.
\end{align*}
Thus $\chi^{\quasi}_{\Psi_1}(q) <  (E_{\Psi_1}(S){\rm L}_{\overline{A^\circ}})(q)$ for all $q$.
\item 
Let $\Psi_2 = \{\alpha_2, 3\alpha_1+\alpha_2\}$. Then $\Psi_2$ is compatible (e.g., checked by Figure \ref{fig:q=9}) but not an ideal of any positive system of $\Phi=G_2$. Otherwise, $\Psi_2$ must be the associated base w.r.t. that positive system since it has only two elements. However, it is a contradiction because the length of $\alpha_2$ and $3\alpha_1+\alpha_2$ are equal.
\end{enumerate}
\end{example}
 \begin{example}
\label{ex:non-comp}
Let $\Phi=A_4$ and let $\Psi=\Delta \cup  \{  \tilde{\alpha}\}= \{\alpha_1, \alpha_2, \alpha_3, \alpha_4,\alpha_1+ \alpha_2+ \alpha_3+ \alpha_4 \}$. We may compute 
\begin{align*}
\chi^{\quasi}_{\Psi}(q) & =q^4 -5q^3 +10q^2 -10q +4  \mbox{ for all } q, \\
E_{\Psi}(t) & = 5t^5+9t^4+ 9t^3 + t^2,
\\
(E_{\Psi}(S){\rm L}_{\overline{A^\circ}})(q) & = q^4 -5q^3 +11q^2 -12q +5 \mbox{ for all } q.
\end{align*}
Thus, $\chi^{\quasi}_{\Psi}(q) <  (E_{\Psi}(S){\rm L}_{\overline{A^\circ}})(q)$ for all $q \ne 1$.
It follows from Theorem \ref{thm:shift}  that $\Psi$ is not compatible.

\end{example}

In general, it is not easy to compute the $\A$-Eulerian polynomial of an arbitrary (compatible) subset by using its primary definition (Definition \ref{def:Eulerian}).
For some special cases, e.g., the subsets obtained from the positive system by removing one element as mentioned in Example \ref{ex:compatibleW}(a), we can compute the corresponding Eulerian polynomial in terms of the root system invariants.

Let $\Phi_l$ (resp., $\Phi_s$) denote the set of all long (resp., short) roots of $\Phi$. 
We will use the notation $\Phi_{l,s}$ when we refer to either $\Phi_l$ or   $\Phi_s$.
If $\Phi$ is simply-laced, we agree that $\Phi_l=\Phi, \Phi_s=\emptyset$. 
Denote $\Phi_{l,s}^+:=\Phi^+ \cap \Phi_{l,s}$
and $\widetilde{\Delta}_{l,s}:=\widetilde{\Delta}\cap \Phi_{l,s}$.

\begin{proposition}
\label{prop:many}
Let $\delta \in \Phi_{l,s}^+$ and set $\delta^c : = \Phi^+ \setminus \{\delta\}$. Define
$$\Omega:=\{ w \in W \mid \delta = -w(\alpha_i) \mbox{ for some } \alpha_i \in \widetilde{\Delta}_{l,s}\}.$$
Then 
\begin{enumerate}[(i)]
\item $\Omega = \{ w \in W \mid \dsc_{\delta^c}(w) = c_i \mbox{ for some } \alpha_i \in \widetilde{\Delta}_{l,s}\} =  \{ w \in W \mid \dsc_{\delta^c}(w)>0\}$.
\item $\Omega = \bigsqcup_{\alpha_i \in \widetilde{\Delta}_{l,s}}\Omega_i$, where  $\Omega_i:=\{ w \in W \mid \delta = -w(\alpha_i)\}$ for each $ \alpha_i \in \widetilde{\Delta}_{l,s}$.
\item $\Omega_i\simeq  {\rm stab}(\delta)$ (as sets), where ${\rm stab}(\delta):=\{ w \in W \mid \delta = w(\delta)\}$ is the stablizer of $\delta$ in $W$. In particular, $\#\Omega_i=   \frac{\#W}{\#\Phi_{l,s}}$. 
\item $\#\Omega = \frac{\#\widetilde{\Delta}_{l,s}\times\#W}{\#\Phi_{l,s}}$.
\end{enumerate}
 \end{proposition}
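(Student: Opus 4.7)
The plan is to exploit that $\Psi = \delta^c$ forces $-\Psi^c = \{-\delta\}$ in Definition \ref{def:dsc}, together with two classical facts about the Weyl group of an irreducible root system: it preserves root lengths, and it acts transitively on the roots of each fixed length.

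First I would prove (i). Substituting $\Psi = \delta^c$ into Definition \ref{def:dsc} yields
\[
\dsc_{\delta^c}(w) = \sum_{\substack{0 \le i \le \ell \\ w(\alpha_i) = -\delta}} c_i.
\]
Because $w$ is injective on $V$ and the elements of $\widetilde{\Delta}$ are pairwise distinct, at most one index $i$ can realize $w(\alpha_i) = -\delta$; and if such an index exists, length preservation under $w$ forces $\alpha_i \in \widetilde{\Delta}_{l,s}$. Consequently the three conditions $w \in \Omega$, $\dsc_{\delta^c}(w) = c_i$ for some $\alpha_i \in \widetilde{\Delta}_{l,s}$, and $\dsc_{\delta^c}(w) > 0$ are equivalent, establishing (i). The same uniqueness-of-index observation immediately yields (ii): the sets $\Omega_i$ over $\alpha_i \in \widetilde{\Delta}_{l,s}$ are pairwise disjoint, and their union is $\Omega$ by (i).

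For (iii), I would first fix a base point for $\Omega_i$. Since $W$ acts transitively on $\Phi_{l,s}$ in an irreducible root system and both $\alpha_i, -\delta \in \Phi_{l,s}$, there exists $w_0 \in W$ with $w_0(\alpha_i) = -\delta$. Then the map $w \mapsto w_0^{-1} w$ is a bijection from $\Omega_i$ onto ${\rm stab}(\alpha_i)$, and conjugation by $w_0$ identifies ${\rm stab}(\alpha_i)$ with ${\rm stab}(-\delta) = {\rm stab}(\delta)$; hence $\#\Omega_i = \#{\rm stab}(\delta) = \#W/\#\Phi_{l,s}$, the last equality by orbit--stabilizer applied to the orbit $W \cdot \delta = \Phi_{l,s}$. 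Assertion (iv) then follows by summing (iii) over $\alpha_i \in \widetilde{\Delta}_{l,s}$ via the disjoint decomposition from (ii).

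There is no serious obstacle here; the crucial observation is that the length preservation of $W$ is what forces the index contributing to $\dsc_{\delta^c}(w)$ to lie in $\widetilde{\Delta}_{l,s}$, which is precisely what makes the \emph{a priori} restrictive definition of $\Omega$ lose no information.
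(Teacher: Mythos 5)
Your proof is correct and takes essentially the same route as the paper's: the paper also treats (i) and (ii) as immediate, proves (iii) by fixing (via transitivity of $W$ on $\Phi_{l,s}$) an element carrying $\delta$ to $\pm\alpha_i$ and composing to get a bijection $\Omega_i \simeq {\rm stab}(\delta)$, and then applies the orbit--stabilizer count $\#{\rm stab}(\delta) = \#W/\#\Phi_{l,s}$ and sums over $\widetilde{\Delta}_{l,s}$ for (iv). The only cosmetic difference is that you pass through ${\rm stab}(\alpha_i)$ and conjugate, whereas the paper maps directly into ${\rm stab}(\delta)$ via $w \mapsto w s_{\alpha_i} w_{\delta\rightarrow\alpha_i}$.
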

\begin{proof}  
(i) and (ii) are trivial, and (iv) is an immediate consequence of (ii) and (iii). 
Let us prove (iii).
Fix $w_{\delta \rightarrow \alpha_i} \in W$ so that $w_{\delta \rightarrow \alpha_i} (\delta)=\alpha_i$.
Thus, if $w \in \Omega_i$, then $ws_{\alpha_i}w_{\delta \rightarrow \alpha_i} \in  {\rm stab}(\delta)$.
Consider the set map
$$\phi: \Omega_i \to  {\rm stab}(\delta), \mbox{ defined by } w \mapsto ws_{\alpha_i}w_{\delta \rightarrow \alpha_i}.$$
Clearly, $\phi$ is well-defined and injective. 
Moreover, for any $\tau \in {\rm stab}(\delta)$, we have $w=\tau w^{-1}_{\delta \rightarrow \alpha_i}s_{\alpha_i}\in \Omega_i$ such that $\phi(w)=\tau$.
Therefore $\phi$ is surjective and hence bijective. 
To prove the second statement, it suffices to show that  $\#{\rm stab}(\delta)= \frac{\#W}{\#\Phi_{l,s}}$. 
This can be done by proving that the surjective map $\varphi: W\to \Phi_{l,s}$
defined by $w \mapsto w(\delta)$ induces a bijection $W/{\rm stab}(\delta) \to \Phi_{l,s}$.
 \end{proof}

\begin{theorem}\label{thm:niceA'}   
If $\delta \in \Phi_{l,s}^+$, then 
\begin{equation*}\label{eq:niceA'}
E_{\delta^c}(t) =\frac{\#W}{f\times\#\Phi_{l,s}} \sum_{\alpha_i \in \widetilde{\Delta}_{l,s}}  t^{\h-c_i}+\frac{\#W(\#\Phi_{l,s}-\# \widetilde{\Delta}_{l,s})}{f\times\#\Phi_{l,s}}t^{\h}.
\end{equation*}
\end{theorem}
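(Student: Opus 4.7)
The plan is to compute $E_{\delta^c}(t)$ directly from Definition \ref{def:Eulerian} by partitioning $W$ according to the value of $\dsc_{\delta^c}(w)$, and then read off the cardinalities of the pieces from Proposition \ref{prop:many}.

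First I would observe that since $\Psi = \delta^c$, we have $\Psi^c = \{\delta\}$, so that
$$\dsc_{\delta^c}(w) = \sum_{0 \le i \le \ell,\; w(\alpha_i) = -\delta} c_i.$$
Because $w$ is a bijection of $\Phi$, for a fixed $w$ there is \emph{at most one} index $i$ with $w(\alpha_i) = -\delta$. Hence $\dsc_{\delta^c}(w) \in \{0\} \cup \{c_i : \alpha_i \in \widetilde{\Delta}\}$, and $\dsc_{\delta^c}(w) = c_i$ precisely when $w(\alpha_i) = -\delta$.

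Next I would make the key length observation: since $w \in W$ is an orthogonal transformation, $w(\alpha_i)$ has the same Euclidean length as $\alpha_i$. Therefore $w(\alpha_i) = -\delta$ forces $\alpha_i$ and $\delta$ to have equal length, i.e.\ $\alpha_i \in \widetilde{\Delta}_{l,s}$ (where the subscript matches the length class of $\delta \in \Phi_{l,s}^+$). This yields the disjoint decomposition
$$W = (W \setminus \Omega) \;\sqcup\; \bigsqcup_{\alpha_i \in \widetilde{\Delta}_{l,s}} \Omega_i,$$
on which $\dsc_{\delta^c}$ equals $0$ on $W \setminus \Omega$ and equals $c_i$ on $\Omega_i$, exactly as in Proposition \ref{prop:many}(i)-(ii).

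Finally I would assemble the polynomial. Splitting the sum in Definition \ref{def:Eulerian} along this partition gives
$$E_{\delta^c}(t) \;=\; \frac{1}{f}\Bigl[\,\#(W\setminus\Omega)\cdot t^{\h} \;+\; \sum_{\alpha_i \in \widetilde{\Delta}_{l,s}} \#\Omega_i \cdot t^{\h - c_i}\Bigr].$$
By Proposition \ref{prop:many}(iii), $\#\Omega_i = \#W/\#\Phi_{l,s}$, and by part (iv), $\#\Omega = \#\widetilde{\Delta}_{l,s}\cdot\#W/\#\Phi_{l,s}$, so $\#(W\setminus\Omega) = \#W(\#\Phi_{l,s} - \#\widetilde{\Delta}_{l,s})/\#\Phi_{l,s}$. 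Substituting yields exactly the stated formula. There is no real obstacle here since Proposition \ref{prop:many} does all the heavy lifting; the only subtlety is recognizing that the length-preservation of $W$ is what automatically confines the contributing indices $i$ to $\widetilde{\Delta}_{l,s}$ rather than to all of $\widetilde{\Delta}$.
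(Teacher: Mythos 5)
Your proposal is correct and takes essentially the same route as the paper, whose entire proof is the remark that the formula ``follows directly from Definition \ref{def:Eulerian} and Proposition \ref{prop:many}''; you have simply written out that computation in full, partitioning $W$ into $W\setminus\Omega$ and the $\Omega_i$ and substituting the cardinalities from Proposition \ref{prop:many}(iii)--(iv). Your explicit length-preservation observation (that $w(\alpha_i)=-\delta$ forces $\alpha_i\in\widetilde{\Delta}_{l,s}$, so $\dsc_{\delta^c}$ vanishes off $\Omega$) is exactly the point the paper leaves implicit in Proposition \ref{prop:many}(i), so there is nothing to fix.
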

\begin{proof} 
It follows directly from Definition \ref{def:Eulerian} and Proposition \ref{prop:many}.
\end{proof}

An example of Theorem \ref{thm:niceA'}  is already mentioned in Example \ref{ex:yes-no}(a). 
In addition, if $\delta$ is any short root in $\Phi^+(G_2)$, then Theorem \ref{thm:niceA'} implies that 
$E_{\delta^c}(t) = 10t^6+2t^3$.

 Next, we discuss the generating function of the characteristic quasi-polynomial.
\begin{proposition}
\label{prop:ehrhart}   
 $$\sum_{q \ge 1}{\rm L}_{A^\circ}(q)t^q = \frac{t^\h}{\prod_{i=0}^\ell (1-t^{c_i}) }.$$
 \end{proposition}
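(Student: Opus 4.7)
My plan is a three-step calculation combining the shift formula of Proposition \ref{prop:remove}, the vanishing of ${\rm L}_{A^\circ}(q)$ below $\h$, and a direct generating-function decomposition of the Ehrhart series of the simplex $\overline{A^\circ}$.

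First, I would use ${\rm L}_{A^\circ}(q) = {\rm L}_{\overline{A^\circ}}(q-\h)$ for $q \in \Z$ (Proposition \ref{prop:remove}) together with Corollary \ref{cor:>0} (which forces ${\rm L}_{A^\circ}(q) = 0$ for $1 \le q < \h$) to rewrite the left-hand side as
$$\sum_{q \ge 1} {\rm L}_{A^\circ}(q)\, t^q \;=\; \sum_{q \ge \h} {\rm L}_{\overline{A^\circ}}(q-\h)\, t^q \;=\; t^{\h} \sum_{k \ge 0} {\rm L}_{\overline{A^\circ}}(k)\, t^k.$$
The problem thus reduces to computing the Ehrhart series of $\overline{A^\circ}$ with respect to the coweight lattice $Z(\Phi^\vee) = \bigoplus_{i=1}^\ell \Z\varpi_i^\vee$ (Remark \ref{rem:E-lattice}).

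Second, I would give a combinatorial description of ${\rm L}_{\overline{A^\circ}}(q)$. Writing a lattice point as $x = \sum_{i=1}^\ell b_i \varpi_i^\vee$ with $b_i \in \Z$, the defining inequalities of $q\overline{A^\circ}$ translate, via $(\alpha_i, \varpi_j^\vee) = \delta_{ij}$ and $(\alpha_0, \varpi_j^\vee) = -c_j$, to $b_i \ge 0$ for $1 \le i \le \ell$ and $\sum_{i=1}^\ell c_i b_i \le q$. Introducing the slack variable $b_0 := q - \sum_{i=1}^\ell c_i b_i \ge 0$, and recalling that $c_0 = 1$, this yields a bijection
$$q\overline{A^\circ} \cap Z(\Phi^\vee) \;\longleftrightarrow\; \Big\{ (b_0, b_1, \ldots, b_\ell) \in \Z_{\ge 0}^{\ell+1} : \sum_{i=0}^\ell c_i b_i = q \Big\}.$$

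Third, the generating function for this weighted-partition count factors coordinate by coordinate:
$$\sum_{q \ge 0} {\rm L}_{\overline{A^\circ}}(q)\, t^q \;=\; \prod_{i=0}^\ell \sum_{b_i \ge 0} t^{c_i b_i} \;=\; \prod_{i=0}^\ell \frac{1}{1 - t^{c_i}},$$
and multiplying by $t^\h$ yields the claim. There is no serious obstacle here; the only point deserving care is the initial shift, where the vanishing ${\rm L}_{A^\circ}(q) = 0$ for $1 \le q < \h$ is exactly what allows one to replace the sum over $q \ge 1$ by a sum over $k \ge 0$, rather than a sum extending into negative indices where the quasi-polynomial extrapolation of ${\rm L}_{\overline{A^\circ}}$ (via Ehrhart reciprocity) would produce spurious nonzero contributions.
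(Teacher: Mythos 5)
Your proof is correct and is essentially the argument behind the paper's one-line citation to \cite[Proof of Theorem 3.1]{KTT10}: after the shift ${\rm L}_{A^\circ}(q)={\rm L}_{\overline{A^\circ}}(q-\h)$ (Proposition \ref{prop:remove}) and the vanishing of ${\rm L}_{A^\circ}$ on $[1,\h-1]$ (Corollary \ref{cor:>0}), your bijection identifies ${\rm L}_{\overline{A^\circ}}(q)$ with the number of solutions of $c_0b_0+\cdots+c_\ell b_\ell=q$ in $\Z_{\ge 0}^{\ell+1}$, whose generating function factors as $\prod_{i=0}^\ell(1-t^{c_i})^{-1}$, exactly as in the cited computation. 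One small quibble with your closing remark: it is not true that extending the sum to negative shifts would create spurious terms, since by reciprocity ${\rm L}_{\overline{A^\circ}}(k)=(-1)^\ell\,{\rm L}_{A^\circ}(-k)$ vanishes for $1-\h\le k\le -1$ by the same Corollary \ref{cor:>0}; your restriction of the summation range is nonetheless the cleaner way to organize the step.
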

\begin{proof} 
See, e.g., \cite[Proof of Theorem 3.1]{KTT10}.
\end{proof}

\begin{theorem}\label{thm:known}  
\begin{enumerate}[(i)]
\item $$\sum_{q \ge 1}\chi^{\quasi}_{\Phi^+}(q)t^q = \frac{ \frac{\#W}{f}t^\h}{\prod_{i=0}^\ell (1-t^{c_i}) }.$$ 
\item  Let $R_\Phi(t)$ be the generalized Eulerian polynomial (see Remark \ref{rem:empty-full1}(b)).
Then
 $$\sum_{q \ge 1}q^\ell t^q = \frac{R_\Phi(t)}{\prod_{i=0}^\ell (1-t^{c_i}) }.$$ 
\end{enumerate}
\end{theorem}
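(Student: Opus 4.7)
Plan. Part (i) should be immediate from results already in hand. By Theorem \ref{thm:quasi-Ehrhart}, $\chi^{\quasi}_{\Phi^+}(q) = (\#W/f)\,{\rm L}_{A^\circ}(q)$, so summing against $t^q$ and invoking Proposition \ref{prop:ehrhart} I would write
$$\sum_{q \ge 1}\chi^{\quasi}_{\Phi^+}(q)\, t^q = \frac{\#W}{f}\sum_{q \ge 1}{\rm L}_{A^\circ}(q)\, t^q = \frac{(\#W/f)\,t^\h}{\prod_{i=0}^\ell (1-t^{c_i})},$$
which is exactly the claim in (i). No further manipulation is needed.

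For (ii), the key input is Theorem \ref{thm:shift} applied to $\Psi = \emptyset$. The empty set is trivially compatible (it appears in Example \ref{ex:compatibleW}(a)) and $\chi^{\quasi}_{\emptyset}(q) = q^\ell$ (the complement of the empty arrangement in $\Z_q^\ell$ is the whole group). By Remark \ref{rem:empty-full1}(b), $E_\emptyset(t) = R_\Phi(t)$. Write $R_\Phi(t) = \sum_{k=1}^{\h} r_k t^k$; the expansion starts at $k=1$ because Lemma \ref{lem:range} gives $R_\Phi(0)=0$, and ends at $k=\h$ because $\h - \dsc_\emptyset(w) \le \h$. Then Theorem \ref{thm:shift} yields
$$q^\ell = (E_\emptyset(S){\rm L}_{\overline{A^\circ}})(q) = \sum_{k=1}^{\h} r_k\,{\rm L}_{\overline{A^\circ}}(q-k),$$
and multiplying by $t^q$ and summing over $q\ge 1$ gives
$$\sum_{q\ge 1} q^\ell t^q = \sum_{k=1}^{\h} r_k\, t^k \sum_{n\ge 1-k} {\rm L}_{\overline{A^\circ}}(n)\, t^n.$$

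The last step is to evaluate the inner sum. First I would derive
$$\sum_{n\ge 0}{\rm L}_{\overline{A^\circ}}(n)\, t^n = \frac{1}{\prod_{i=0}^{\ell}(1-t^{c_i})}$$
from Proposition \ref{prop:ehrhart} by substituting the shift ${\rm L}_{A^\circ}(q) = {\rm L}_{\overline{A^\circ}}(q-\h)$ of Proposition \ref{prop:remove}, reindexing, and dividing by $t^\h$. To replace $\sum_{n \ge 1-k}$ by $\sum_{n\ge 0}$ I must check that ${\rm L}_{\overline{A^\circ}}(n) = 0$ whenever $1-\h \le n \le -1$. This follows by Ehrhart reciprocity ${\rm L}_{\overline{A^\circ}}(-m) = (-1)^\ell {\rm L}_{A^\circ}(m)$ combined with Corollary \ref{cor:>0}, which gives ${\rm L}_{A^\circ}(m) = 0$ for $1 \le m < \h$. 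Since $1 \le k \le \h$, every index in $\{1-k, \dots, -1\}$ lies in this vanishing range, so the inner sum equals $1/\prod_{i=0}^\ell (1-t^{c_i})$. Pulling this common factor out and recognizing $\sum_k r_k t^k = R_\Phi(t)$ completes the proof.

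The only non-routine ingredient is the boundary vanishing of ${\rm L}_{\overline{A^\circ}}$ on the interval $\{1-\h,\dots,-1\}$: without it, the shift operator applied to ${\rm L}_{\overline{A^\circ}}$ would introduce spurious negative-index contributions that obstruct the clean factorization on the right-hand side. Everything else in the argument is bookkeeping around generating functions.
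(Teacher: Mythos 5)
Your proposal is correct, but it takes a genuinely different route from the paper: the paper's own ``proof'' of Theorem \ref{thm:known} is a deferral to the literature (for (i) it cites Athanasiadis, Blass--Sagan and Kamiya--Takemura--Terao; for (ii) it cites Lam--Postnikov), whereas you derive both parts internally from the paper's machinery. For (i) your two-line argument (Theorem \ref{thm:quasi-Ehrhart} plus Proposition \ref{prop:ehrhart}) is exactly right and needs nothing more. For (ii) you specialize Theorem \ref{thm:shift} to $\Psi=\emptyset$, which is legitimate and non-circular: the paper's proofs of Theorem \ref{thm:shift} and Proposition \ref{prop:ehrhart} nowhere use Theorem \ref{thm:known}, and indeed Remark \ref{rem:empty-full2} explicitly flags that both parts of Theorem \ref{thm:known} are recovered as the specializations $\Psi=\Phi^+$ and $\Psi=\emptyset$ of Theorems \ref{thm:shift} and \ref{thm:Eulerian} --- you have, in effect, supplied the proof the paper only gestures at. Your generating-function bookkeeping parallels the paper's later proof of Theorem \ref{thm:Eulerian}, with one presentational difference: the paper works with the open alcove, using ${\rm L}_{\overline{A^\circ}}(q-\h+\dsc)={\rm L}_{A^\circ}(q+\dsc)$ (Proposition \ref{prop:remove}) and the vanishing ${\rm L}_{A^\circ}(n)=0$ for $1\le n<\h$ (Corollary \ref{cor:>0}) to extend the inner sums, while you stay with the closed alcove and prove the equivalent boundary vanishing ${\rm L}_{\overline{A^\circ}}(n)=0$ for $1-\h\le n\le -1$ via Ehrhart reciprocity plus Corollary \ref{cor:>0}. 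You are right to single that vanishing out as the one non-routine step; it is exactly what makes the reindexed sums $\sum_{n\ge 1-k}$ collapse to $\sum_{n\ge 0}$ and the factor $1/\prod_{i=0}^\ell(1-t^{c_i})$ pull out cleanly. What your route buys is a self-contained proof within the paper (and, as a byproduct, a derivation of the Lam--Postnikov identity rather than an appeal to it); what the paper's route buys is brevity and proper attribution of results that historically predate its framework.
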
 
\begin{proof} 
For a proof of (i), see, e.g.,  \cite{A96}, \cite[Theorem 4.1]{BS98}, \cite[Theorem 3.1]{KTT10}.
(ii) follows from \cite[Theorem 10.1]{LP18}.
\end{proof}
 
\begin{theorem}\label{thm:Eulerian}   
A subset $\Psi \subseteq \Phi^+$ is compatible if and only if  
\begin{equation}\label{eq:gen-func}
 \sum_{q \ge 1}\chi^{\quasi}_{\Psi}(q)t^q = \frac{E_\Psi(t)}{\prod_{i=0}^\ell (1-t^{c_i}) }.
\end{equation}
 \end{theorem}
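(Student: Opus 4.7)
The plan is to derive Theorem \ref{thm:Eulerian} as a direct formal power series consequence of the shift operator characterization in Theorem \ref{thm:shift}. Once the unconditional identity
\begin{equation*}
\sum_{q\ge 1}(E_\Psi(S){\rm L}_{\overline{A^\circ}})(q)\,t^q=\frac{E_\Psi(t)}{\prod_{i=0}^{\ell}(1-t^{c_i})}
\end{equation*}
is established for every $\Psi\subseteq\Phi^+$, the equivalence of \eqref{eq:gen-func} with compatibility follows at once from Theorem \ref{thm:shift}, since two quasi-polynomials in $q$ are equal for all $q\ge 1$ if and only if their generating functions agree.

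First I would rewrite the denominator as an Ehrhart series. Combining Proposition \ref{prop:ehrhart} with ${\rm L}_{A^\circ}(q)={\rm L}_{\overline{A^\circ}}(q-\h)$ from Proposition \ref{prop:remove} and shifting the summation index gives
\begin{equation*}
\frac{1}{\prod_{i=0}^{\ell}(1-t^{c_i})}=\sum_{p\ge 0}{\rm L}_{\overline{A^\circ}}(p)\,t^p,
\end{equation*}
where we use that ${\rm L}_{A^\circ}(q)=0$ for $1\le q<\h$ (Corollary \ref{cor:>0}) so that the lower limit $p\ge 0$ is the correct one after the substitution $p=q-\h$. By Lemma \ref{lem:range} every monomial in $E_\Psi(t)$ has degree between $1$ and $\h$, so I can write $E_\Psi(t)=\sum_{k=1}^{\h}e_k\,t^k$, and the Cauchy product of the two series yields, for every $q\ge 1$,
\begin{equation*}
[t^q]\frac{E_\Psi(t)}{\prod_{i=0}^{\ell}(1-t^{c_i})}=\sum_{k=1}^{\min(q,\,\h)}e_k\,{\rm L}_{\overline{A^\circ}}(q-k).
\end{equation*}

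Next I would compare this with $(E_\Psi(S){\rm L}_{\overline{A^\circ}})(q)=\sum_{k=1}^{\h}e_k\,{\rm L}_{\overline{A^\circ}}(q-k)$. The potentially discrepant terms are those with $k>q$, in which case the argument $q-k$ lies in the range $-(\h-1)\le q-k\le -1$. By Ehrhart reciprocity ${\rm L}_{\overline{A^\circ}}(-j)=(-1)^{\ell}{\rm L}_{A^\circ}(j)$ combined with Corollary \ref{cor:>0}, each such ${\rm L}_{\overline{A^\circ}}(q-k)$ vanishes, so the shift-operator sum and the Cauchy-product coefficient coincide for every $q\ge 1$. This establishes the desired power series identity, and combining with Theorem \ref{thm:shift} yields the theorem.

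The main (and essentially the only) obstacle is the bookkeeping around the boundary values $q<\h$, where the shift-operator formula a priori involves evaluations of ${\rm L}_{\overline{A^\circ}}$ at negative arguments that do not appear in the Cauchy product expansion of the right-hand side. The vanishing of the Ehrhart constituents of $A^\circ$ below the Coxeter number, together with reciprocity, is precisely what reconciles the two expressions; outside that range the match is immediate.
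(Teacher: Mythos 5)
Your proposal is correct and takes essentially the same route as the paper: both reduce the theorem to the unconditional identity $\sum_{q\ge 1}(E_\Psi(S){\rm L}_{\overline{A^\circ}})(q)\,t^q = E_\Psi(t)/\prod_{i=0}^{\ell}(1-t^{c_i})$, valid for every $\Psi\subseteq\Phi^+$ and proved from Propositions \ref{prop:ehrhart} and \ref{prop:remove} together with the vanishing of ${\rm L}_{A^\circ}$ below the Coxeter number (Corollary \ref{cor:>0}), and then conclude by Theorem \ref{thm:shift}. Your deviations are only cosmetic: you match coefficients via a Cauchy product and kill the negative-argument terms by reciprocity, where the paper reindexes $\sum_{i\in[N]}\sum_{q\ge 1}{\rm L}_{A^\circ}(q+\dsc_\Psi(A_i^\circ))t^q$ alcove by alcove (Proposition \ref{prop:remove} already gives ${\rm L}_{\overline{A^\circ}}(q-k)={\rm L}_{A^\circ}(q-k+\h)=0$ there, so reciprocity is an equally valid alternative); and for the converse you apply the stated equivalence of Theorem \ref{thm:shift} coefficientwise, which is a slightly cleaner packaging of the paper's argument via the nonzero discrepancy $R(q)$ taken from that theorem's proof.
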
 
 
\begin{proof} 
Using Corollary \ref{cor:>0}, Propositions \ref{prop:ehrhart} and \ref{prop:remove}, we compute 
\begin{align*}
\sum_{q \ge 1}(E_\Psi(S){\rm L}_{\overline{A^\circ}})(q)t^q  & = \sum_{q \ge 1}\sum_{i \in [N]}   {\rm L}_{A^\circ}(q+\dsc_\Psi(A_i^\circ)) t^q  \\
&=\sum_{i \in [N]} t^{-\dsc_\Psi(A_i^\circ)} \sum_{q \ge 1}  {\rm L}_{{A^\circ}}(q+\dsc_\Psi(A_i^\circ)) t^{q+\dsc_\Psi(A_i^\circ)}   \\
&=\sum_{i \in [N]} t^{-\dsc_\Psi(A_i^\circ)} \sum_{n_i \ge \h}  {\rm L}_{{A^\circ}}(n_i) t^{n_i}   \\
&=\frac{\sum_{i \in [N]}t^{\h-\dsc_\Psi(A_i^\circ)}}{\prod_{i=0}^\ell (1-t^{c_i}) }  =  \frac{E_\Psi(t)}{\prod_{i=0}^\ell (1-t^{c_i}) }.
\end{align*}

If $\Psi \subseteq \Phi^+$ is compatible, then Formula \eqref{eq:gen-func} follows from Theorem \ref{thm:shift} and the calculation above.
Assume that Formula \eqref{eq:gen-func} holds but $\Psi$ is not compatible. By Proof of Theorem \ref{thm:shift}, we can write 
$$\chi^{\quasi}_{\Psi}(q)= (E_\Psi(S){\rm L}_{\overline{A^\circ}})(q) - R(q),$$
for some nonzero function $R(q)$ (actually quasi-polynomial) in $q$. Thus,
$\sum_{q \ge 1} R(q)t^q = 0$, which is a contradiction. 
 
\end{proof}

\begin{remark}
\label{rem:empty-full2}
By Remark \ref{rem:empty-full1}, Theorems \ref{thm:shift} and \ref{thm:Eulerian}, if
 \begin{enumerate}[(a)]
\item  
$\Psi = \Phi^+$, then we recover Theorems \ref{thm:quasi-Ehrhart} and \ref{thm:known}(i).
\item $\Psi =\emptyset$, then we recover \cite[Theorem 4.8]{Y18W} and Theorem \ref{thm:known}(ii).
\end{enumerate}
\end{remark}

 \begin{corollary}\label{cor:A'}   
If $\delta \in \Phi_{l,s}^+$, then 
\begin{equation*}
\label{eq:A'}
\begin{aligned}
& \chi^{\quasi}_{\delta^c}(q)  =\frac{\#W}{f\times\#\Phi_{l,s}} \sum_{\alpha_i \in \widetilde{\Delta}_{l,s}}  {\rm L}_{\overline{A^\circ}}(q-\h+c_i)+\frac{\#W(\#\Phi_{l,s}-\# \widetilde{\Delta}_{l,s})}{f\times\#\Phi_{l,s}} {\rm L}_{\overline{A^\circ}}(q-\h),  \\
& \sum_{q \ge 1}\chi^{\quasi}_{\delta^c}(q)t^q  = \frac{ \#W\sum_{\alpha_i \in \widetilde{\Delta}_{l,s}}  t^{\h-c_i}+\#W(\#\Phi_{l,s}-\# \widetilde{\Delta}_{l,s})t^{\h}}{f\times\#\Phi_{l,s}\times\prod_{i=0}^\ell (1-t^{c_i}) }.
\end{aligned}
\end{equation*}
\end{corollary}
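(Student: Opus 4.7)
The plan is to derive both formulas by a direct specialization: the singleton complement $\delta^c = \Phi^+ \setminus \{\delta\}$ falls into a class of subsets for which every ingredient of Theorems \ref{thm:shift} and \ref{thm:Eulerian} has already been made explicit. First I would invoke Example \ref{ex:compatibleW}(a), which shows that for any $\delta \in \Phi^+$ the set $\delta^c$ is compatible with the Worpitzky partition. This clears the hypothesis needed to apply both Theorem \ref{thm:shift} and Theorem \ref{thm:Eulerian} to $\Psi = \delta^c$.

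Next I would substitute into the identity $\chi^{\quasi}_{\delta^c}(q) = (E_{\delta^c}(S){\rm L}_{\overline{A^\circ}})(q)$ from Theorem \ref{thm:shift}, using the explicit expression
\[
E_{\delta^c}(t) = \frac{\#W}{f \cdot \#\Phi_{l,s}} \sum_{\alpha_i \in \widetilde{\Delta}_{l,s}} t^{\h-c_i} + \frac{\#W(\#\Phi_{l,s} - \#\widetilde{\Delta}_{l,s})}{f \cdot \#\Phi_{l,s}} t^{\h}
\]
obtained in Theorem \ref{thm:niceA'}, which requires the hypothesis $\delta \in \Phi_{l,s}^+$. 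Reading off the action of the shift operator term-by-term (so that each monomial $t^{\h-c_i}$ produces the evaluation ${\rm L}_{\overline{A^\circ}}(q - (\h - c_i)) = {\rm L}_{\overline{A^\circ}}(q - \h + c_i)$ and the monomial $t^{\h}$ produces ${\rm L}_{\overline{A^\circ}}(q - \h)$) immediately yields the first displayed formula of the corollary.

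For the second formula, I would apply Theorem \ref{thm:Eulerian} in the form
\[
\sum_{q \ge 1} \chi^{\quasi}_{\delta^c}(q) t^q = \frac{E_{\delta^c}(t)}{\prod_{i=0}^\ell (1 - t^{c_i})}
\]
and then substitute the same closed form for $E_{\delta^c}(t)$ from Theorem \ref{thm:niceA'} into the numerator; clearing the common factor $\frac{\#W}{f \cdot \#\Phi_{l,s}}$ out front gives precisely the stated generating function.

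There is essentially no obstacle here, since the corollary is a clean specialization: all the combinatorial and geometric work (the Worpitzky partition, the shift-operator identity, the bijection with $\mathrm{stab}(\delta)$-cosets used to compute $E_{\delta^c}$) has already been absorbed into Theorems \ref{thm:shift}, \ref{thm:Eulerian}, and \ref{thm:niceA'}. The only mild care needed is to track correctly the sign in the shift (the monomial $t^k$ in the shift polynomial corresponds to $f(q-k)$, so $t^{\h-c_i}$ gives the argument $q - \h + c_i$ as displayed), and to note that the simply-laced convention $\Phi_s = \emptyset$ forces $\delta \in \Phi_l^+ = \Phi^+$, so no cases are lost.
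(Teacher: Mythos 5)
Your proposal is correct and matches the paper's own proof, which cites exactly the same three ingredients: compatibility of $\delta^c$ (Example \ref{ex:compatibleW}(a), also Theorem \ref{thm:ideal-com} for this case) feeding into Theorem \ref{thm:shift} with the closed form of $E_{\delta^c}(t)$ from Theorem \ref{thm:niceA'}, and Theorem \ref{thm:Eulerian} for the generating function. Your term-by-term reading of the shift operator and the sign bookkeeping ($t^{\h-c_i}$ yielding the argument $q-\h+c_i$) are exactly right.
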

\begin{proof} 
It follows  from Theorems \ref{thm:shift}, \ref{thm:niceA'} and \ref{thm:Eulerian}.
\end{proof}

\section{Deformations of Weyl subarrangements}
\label{sec:deform}
Let $\Psi \subseteq \Phi^+$, and recall the notation $\Psi^c=\Phi^+ \setminus \Psi$. 
Let $a\le b$ be integers, and denote $[a,b]:=\{m\in\Z\mid a \le m\le b\}$. 
Also, if $b \ge 1$, then write $[b]$ instead of $[1,b]$.
 \begin{definition}\label{def:types}
Let $a\le b$, $c \le d$ be integers. Define two types of the \emph{deformed Weyl arrangements} of $\Psi$ as follows:
\begin{enumerate}[(Type I)]
    \item $\mathrm{I}_\Psi^{[a,b]} := \{H_{\alpha,m} \mid \alpha \in \Psi, m \in [a,b]\}$.
    \item $\mathrm{II}_{\Psi,\Psi^c}^{[a,b],[c,d]} := \mathrm{I}_\Psi^{[a,b]}  \sqcup \mathrm{I}_{\Psi^c}^{[c,d]}$.
\end{enumerate}
\end{definition}

\begin{remark}
\label{rem:deform}
 \begin{enumerate}[(a)]
\item 
There is an obvious duality $\mathrm{II}_{\Psi,\Psi^c}^{[a,b],[c,d]} =\mathrm{II}_{\Psi^c,\Psi}^{[c,d],[a,b]}$. 
We can list some specializations: $\mathrm{I}_{\emptyset}^{[a,b]}=\varnothing$ the empty arrangement,  $\mathrm{II}_{\emptyset,\Phi^+}^{[a,b],[c,d]}= \A_{\Phi^+}^{[c,d]}$ the \emph{truncated affine Weyl arrangement}, including the extended Shi, Catalan, Linial arrangements, see, e.g., \cite[\S9]{SP00}. 
In addition, $\mathrm{I}_{\Phi^+}^{[a,b]} =\mathrm{II}_{\Psi,\Psi^c}^{[a,b],[a,b]}=\A_{\Phi^+}^{[a,b]}$. 
We refer the reader to \cite{A99}, \cite{A04}, \cite{Y18W} and \cite{Y18L} for more details on the characteristic (quasi-)polynomials of $\A_{\Phi^+}^{[a,b]}$.
\item 
The deformed Weyl arrangements of an arbitrary $\Psi$ are less well-known. When $\Phi$ is of type $A$, the \emph{deleted (or graphical) Shi arrangement}, see, e.g., \cite[\S3]{A96} or \cite{AR12}, is the product (e.g., \cite[Definition 2.13]{OT92}) of the $1$-dimensional  empty arrangenment and $\mathrm{II}_{\Psi,\Psi^c}^{[0,1],[0,0]}$. 
\end{enumerate}
\end{remark}

 \begin{definition}\label{def:others}   
For $w\in W$, define
\begin{align*}
\overline{\dsc}_\Psi(w) &:= \sum_{0 \le i \le \ell, \, w(\alpha_i)\in -\Psi}c_i, \\
\asc_\Psi(w) &:=  \sum_{0 \le i \le \ell, \, w(\alpha_i)\in \Psi^c}c_i, \\
\overline{\asc}_\Psi(w) &:= \sum_{0 \le i \le \ell, \, w(\alpha_i)\in \Psi}c_i.
\end{align*}
\end{definition}
Obviously, $\asc_\Psi(w)+\overline{\asc}_\Psi(w) +\dsc_\Psi(w)+\overline{\dsc}_\Psi(w)= \h$ for all $w\in W$. 
Similar to Lemma \ref{lem:range}, each function defined above takes values in $[0,\h-1]$.
Furthermore, 
\begin{align*}
\asc_\emptyset(w) &= \overline{\asc}_{\Phi^+}(w) = \asc_\Psi(w) + \overline{\asc}_\Psi(w), \\
\dsc_\emptyset(w) &= \overline{\dsc}_{\Phi^+}(w) = \dsc_\Psi(w) + \overline{\dsc}_\Psi(w).
\end{align*}
Similar to Definition \ref{def:asc-ext}, we can extend the functions above to functions on the set of all alcoves.

Now let us formulate a deformed version of Proposition \ref{prop:remove}. Set
 \begin{align*}
F_i^{[a_i,b_i]} & := \bigcup_{m \in [a_i,b_i]}H_{\alpha_i, m}\,\,(1\le i \le \ell), \\
F_0^{[a_0,b_0]} & := \bigcup_{m \in [a_0,b_0]}H_{\alpha_0,-q+ m}.
\end{align*}

\begin{proposition}
\label{prop:affine-remove}   
Let $\{i_1, \ldots, i_k\} \subseteq [0,\ell]$, and let $b_{i_j}\ge0$ for all $1 \le j \le k$.
Suppose that $q > \sum_{j\in[k]}(b_{i_j}+1)c_{i_j}$. Then
 $$\#\left(q \overline{A^\circ} \cap Z(\Phi^\vee)\setminus \bigcup_{j\in[k]} F_{i_j}^{[0, b_{i_j}]} \right)  = {\rm L}_{\overline{A^\circ}}(q- \sum_{j\in[k]}(b_{i_j}+1)c_{i_j}).$$
\end{proposition}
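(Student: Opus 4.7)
The plan is to generalize the argument behind Proposition \ref{prop:remove} by exhibiting an explicit lattice translation that collapses the deformed problem to the undeformed one. First, I would rewrite the alcove in barycentric-style coordinates: for $x \in V$ set $u_j := (\alpha_j, x)$ for $1 \le j \le \ell$. Then $x \in q \overline{A^\circ}$ if and only if $u_j \ge 0$ for all $j$ and $(\tilde\alpha, x) = \sum_{j=1}^\ell c_j u_j \le q$, using $\tilde\alpha = \sum_{j=1}^\ell c_j \alpha_j$. Moreover, $x \in Z(\Phi^\vee)$ exactly when every $u_j$ is an integer. Under this encoding, removing the union $\bigcup_{j \in [k]} F_{i_j}^{[0, b_{i_j}]}$ translates into the following constraints on $(u_1, \ldots, u_\ell) \in \Z_{\ge 0}^\ell$: for each $i_j \in \{i_1, \ldots, i_k\} \cap [1,\ell]$, we require $u_{i_j} \ge b_{i_j} + 1$; and if $0 \in \{i_1, \ldots, i_k\}$, we additionally require $\sum_{s=1}^\ell c_s u_s \le q - b_0 - 1$.

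Next, I would introduce the shift vector
$$v := \sum_{i_j \in \{i_1, \ldots, i_k\} \setminus \{0\}} (b_{i_j} + 1)\, \varpi^\vee_{i_j} \in Z(\Phi^\vee),$$
which lies in the coweight lattice because each $\varpi^\vee_{i_j}$ does. Using the dual-basis identity $(\alpha_s, \varpi^\vee_{i_j}) = \delta_{s, i_j}$ and the relation $(\tilde\alpha, \varpi^\vee_{i_j}) = c_{i_j}$, the change of variables $y := x - v$ sends the constraints above bijectively onto $(\alpha_s, y) \ge 0$ for all $s \in [1,\ell]$ and $(\tilde\alpha, y) \le q - M$, where $M := \sum_{j\in[k]}(b_{i_j}+1)c_{i_j}$. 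The contribution of $i_j = 0$ enters the $(\tilde\alpha, y)$-bound directly through the additional $b_0 + 1$ subtraction, which matches the term $(b_0+1)c_0$ in $M$ since $c_0 = 1$. Thus the two kinds of indices are handled uniformly.

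Finally, since $v \in Z(\Phi^\vee)$, the map $x \mapsto x - v$ is a bijection on $Z(\Phi^\vee)$, so it restricts to a bijection between the set on the left-hand side and $(q - M)\overline{A^\circ} \cap Z(\Phi^\vee)$. The hypothesis $q > M$ guarantees that the dilation factor $q - M$ is positive, whence by definition the cardinality of this set equals ${\rm L}_{\overline{A^\circ}}(q - M)$, which is precisely the desired right-hand side.

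I do not anticipate any serious obstacle; the one subtlety worth handling carefully is the slight asymmetry between the role of $i_j = 0$ (which perturbs the $\tilde\alpha$-bound of the simplex) and that of $i_j \in [1,\ell]$ (which perturbs the $\alpha_{i_j}$-coordinate lower bounds). The observation that $c_0 = 1$ absorbs this asymmetry into the single constant $M$ is what makes the translation-trick proof succeed in one stroke, exactly in the spirit of the special case $b_{i_j} = 0$ treated in \cite[Corollaries~3.4 and 3.5]{Y18W}.
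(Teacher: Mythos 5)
Your proof is correct and is essentially the paper's own argument: both rest on the translation trick (shifting by $(b_{i}+1)\varpi^\vee_{i}$ so that the constraint $(\alpha_i,x)\ge b_i+1$ becomes $(\alpha_i,y)\ge 0$, with the $\alpha_0$-hyperplanes absorbed directly into the $\tilde\alpha$-bound via $c_0=1$), the only difference being that the paper removes one family $F_i^{[0,b_i]}$ at a time and then ``applies the formula repeatedly,'' whereas you compose all the shifts into the single coweight vector $v$ and land on $(q-M)\overline{A^\circ}\cap Z(\Phi^\vee)$ in one stroke. This is a cosmetic reorganization, not a different route, and if anything your simultaneous version makes the bookkeeping (and the role of the hypothesis $q>M$) slightly more transparent than the iterated one.
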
 
\begin{proof} 
The formula was implicitly used in  \cite[\S5]{Y18W} and its proof is very similar to the non-deformed case. Note that if $i \in [\ell]$, then
 \begin{align*}
\#\left(q\overline{A^\circ} \cap Z(\Phi^\vee)\setminus F_{i}^{[0, b_i]} \right)
& = \#\left\{x \in Z(\Phi^\vee) \middle|
\begin{array}{c}
      (\alpha_i,x) \ge b_i+1  \\
       (\alpha_j,x) \ge 0 \,(j \in[\ell]\setminus\{ i\})\\
        (\alpha_0,x) \ge -q 
    \end{array}
\right\} \\
&=  {\rm L}_{\overline{A^\circ}}(q- (b_i+1)c_i).
\end{align*}
Here the last equality follows from the bijection $x \mapsto x+(b_i+1)\varpi_1^\vee$.
The proof for $i=0$ is similar. Then apply the formula above repeatedly. 
\end{proof}

\begin{remark}
\label{rem:intervals}
 If we replace the interval $[0, b_{i_j}]$ in Proposition \ref{prop:affine-remove} by $[a, b_{i_j}]$ with $a\ge1$, there might be a large change in the right-hand side of the formula above. For example, if $1\le a_1 \le b_1$, then
$$\#\left(q\overline{A^\circ} \cap Z(\Phi^\vee)\setminus F_{1}^{[a_1, b_1]} \right)=  {\rm L}_{\overline{A^\circ}}(q- (b_1+1)c_1)+{\rm L}_{\overline{A^\circ}}(q)- {\rm L}_{\overline{A^\circ}}(q-a_1c_1).$$
 \end{remark}

 \begin{theorem}\label{thm:CQP-I}
Let $\Psi$ be a compatible subset of $\Phi^+$.
 \begin{enumerate}[(i)]
\item If $a, b\ge 0$, then 
$$
\chi^{\quasi}_{\mathrm{I}_{\Psi}^{[-a,b]}}(q) = \sum_{i \in [N]} {\rm L}_{\overline{A^\circ}}(q-(b+1)\overline{\asc}_\Psi(A_i^\circ) -{\asc}_\Psi(A_i^\circ) -(a+1)\overline{\dsc}_\Psi(A_i^\circ)).
$$
\item  If $b\ge 1$, then 
$$
\chi^{\quasi}_{\mathrm{I}_{\Psi}^{[1,b]}}(q) = \sum_{i \in [N]} {\rm L}_{\overline{A^\circ}}(q-(b+1) \overline{\asc}_\Psi(A_i^\circ) - {\asc}_\Psi(A_i^\circ)).
$$
\end{enumerate}
\end{theorem}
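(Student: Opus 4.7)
The plan is to follow the two-step scheme used for Theorem~\ref{thm:shift}, but with a per-facet accounting that tracks the interval. First, Proposition~\ref{prop:bijection} (applied to $\mathrm{I}_\Psi^{[a,b]}$ with shift vector $m_{(\alpha,j)}=j$) together with the Worpitzky partition gives
\[
\chi^{\quasi}_{\mathrm{I}_\Psi^{[a,b]}}(q) = \sum_{i \in [N]} \# \left(qA_i^\diamondsuit \cap Z(\Phi^\vee) \setminus \bigcup_{\alpha \in \Psi,\, j \in [a,b],\, k \in \Z} H_{\alpha, kq+j}\right),
\]
so (both sides being quasi-polynomials in $q$) it suffices to verify each summand for $q$ large. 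For each $A_i^\circ = w(A^\circ) + \gamma$, I would classify the $\ell+1$ facets of $A_i^\diamondsuit$ into the four types I-in, I-out, J-in, J-out according to the sign of $w(\alpha_j)$ and whether this direction lies in $\Psi$ or in $\Psi^c$, so that the statistics $\overline{\asc}_\Psi, \asc_\Psi, \overline{\dsc}_\Psi, \dsc_\Psi$ tally the totals $\sum c_j$ across the four types.

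Next I would transfer the picture to the fundamental alcove via $x \mapsto w^{-1}(x - q\gamma)$, which sends $qA_i^\diamondsuit$ to a partial closure of $q\overline{A^\circ}$ (with each facet $F_j$ open or closed matching the type of the corresponding facet of $A_i^\diamondsuit$) and sends the arrangement hyperplanes to translates of $H_{\alpha_j, m'}$ clustering near the $F_j$. In this picture the arrangement groups into strips parallel to the $F_j$ whose widths depend on the interval and the type. For part (ii) with $[1,b]$: an I-in facet yields the strip $F_j^{[0,b]}$ consisting of the open facet together with $b$ interior hyperplanes, contributing $(b+1)c_j$ by Proposition~\ref{prop:affine-remove}; an I-out facet contributes only the open facet $F_j^{[0,0]}$ of weight $c_j$; a J-in or J-out facet contributes nothing, because the closed facet sits at $qk_\beta \equiv 0 \bmod q$ which is not in $[1,b]$ and the hyperplanes $H_{\beta, qk_\beta + j}$ for $j \in [1,b]$ lie outside $qA_i^\diamondsuit$. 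For part (i) with $[-a,b]$, inclusion of $0$ in the interval now also activates the J-in facets: the closed hyperplane is removed and $a$ interior hyperplanes at $j \in [-a,-1]$ appear inside, producing a strip of total weight $(a+1)c_j$, while I-in, I-out, J-out behave as before. Summing the reductions across $j$ and invoking Proposition~\ref{prop:affine-remove} yields exactly the claimed Ehrhart expression for each alcove, and summing over $i \in [N]$ completes both formulas.

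The main obstacle is controlling the ``stray'' arrangement hyperplanes that escape the facet-strip accounting. Two types of stray intersections can arise: (a) hyperplanes $H_{\alpha, m}$ with $\alpha \in \Psi$ but $\alpha$ not a facet direction of $A_i^\diamondsuit$; and (b) facet-direction hyperplanes coming from a neighbouring $q$-block, e.g.\ $H_{\beta, q(k_\beta-1)+j}$ for $j \in [1,b]$ when the range of $(\beta,\cdot)$ on $A_i^\diamondsuit$ has length one (the ``tilde'' direction case). The compatibility hypothesis on $\Psi$ is precisely what eliminates both: by Definition~\ref{def:compatibleW}, every non-empty non-facet intersection of $A_i^\diamondsuit$ with an integer-$m$ hyperplane from $\Psi$ is contained in a facet intersection supplied by some $\beta \in \Psi$, and this containment lifts to $qA_i^\diamondsuit$ in a way that embeds each stray lattice point into one of the facet strips already counted. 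This is a strengthening of the $\scR(A_i^\diamondsuit;q)=0$ reduction at the end of the proof of Theorem~\ref{thm:shift}; once it is in place, the per-alcove identities combine to give the two formulas.
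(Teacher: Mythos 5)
Your skeleton --- classifying the $\ell+1$ facets of each $A_i^\diamondsuit$ by the four statistics, removing strips via Proposition \ref{prop:affine-remove}, and summing over the Worpitzky partition through Proposition \ref{prop:bijection} after reducing to large $q$ --- is exactly the paper's route, and you correctly isolate the one step the paper's own (very terse) proof leaves implicit: eliminating the stray hyperplanes. But precisely there your argument has a genuine gap: the sentence ``this containment lifts to $qA_i^\diamondsuit$ in a way that embeds each stray lattice point into one of the facet strips already counted'' is asserted, not proved, and it is in fact false. Compatibility (Definition \ref{def:compatibleW}) constrains only hyperplanes at \emph{integer} levels, whereas the deformed hyperplanes sit at levels $kq+m$ with $m\neq 0$, i.e.\ at levels $k+m/q$ after rescaling, and two failure modes appear. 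First, if the face of $\overline{A_i^\circ}$ along which $H_{\mu,k}$ touches the alcove lies entirely in the excluded ($I$-type) boundary, then $A_i^\diamondsuit\cap H_{\mu,k}=\emptyset$, compatibility is vacuous, and yet the shifted hyperplane $H_{\mu,kq+m}$ can dip into the interior of $qA_i^\diamondsuit$ near that face. Second, even when compatibility applies, it places the stray points inside a \emph{closed} facet $A_i^\diamondsuit\cap H_{\beta,k_\beta}$ with $\beta\in\Psi$; in part (ii) that facet carries no removed strip at all, because its own level corresponds to $m=0\notin[1,b]$, so the lifted points are not ``already counted,'' and in part (i) the strip at the containing facet can be of the wrong type or too shallow (depth $b+1$ at an $I\cap\Psi$ facet against strays of depth up to $a$). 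This is why the undeformed Theorem \ref{thm:shift} survives --- there the removed levels are exactly multiples of $q$, empty intersections stay empty under dilation, and lifted strays land on hyperplanes that are themselves removed --- while the deformed accounting does not.

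The step does not just lack a proof; it fails on the paper's own example. Take $\Phi=G_2$ and $\Psi_2=\{\alpha_2,\,3\alpha_1+\alpha_2\}$, which is compatible but not an ideal (Example \ref{ex:yes-no}(d)). For part (ii) with $b=1$ one computes directly
$\chi^{\quasi}_{\mathrm{I}_{\Psi_2}^{[1,1]}}(q)=\#\{\textbf{z}\in\Z_q^2 \mid z_2\not\equiv 1,\ 3z_1+z_2\not\equiv 1\}=q^2-2q+\gcd(3,q)$,
so the value at $q=7$ is $36$; on the other hand the twelve shifts $(b+1)\overline{\asc}_\Psi(A_i^\circ)+\asc_\Psi(A_i^\circ)$ over the alcoves of $P^\diamondsuit$ are $1,2,2,3,3,4,4,5,5,6,6,7$, and using the table of ${\rm L}_{\overline{A^\circ}}$ from Example \ref{ex:yes-no}(a) the right-hand side is $7+2\cdot5+2\cdot4+2\cdot3+2\cdot2+2\cdot1+1=38$. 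The discrepancy is exactly two stray lattice points of the kind you tried to rule out: writing $u_i=(\alpha_i,x)$, the point $(u_1,u_2)=(2,1)$ lies in the tightened region of the dilated partially closed alcove $\{u_1>0,\ 3u_1+2u_2>q,\ 3u_1+u_2\le q\}$ but on the removed hyperplane $H_{\alpha_2,1}$ (here the bottom face of $\alpha_2$ misses $A_i^\diamondsuit$, the first failure mode), and the point $(5,7)$ lies in the tightened region of $\{3u_1+2u_2>4q,\ u_1\le q,\ u_2\le q\}$ but on $H_{3\alpha_1+\alpha_2,\,3q+1}$, sitting on the closed facet $H_{\alpha_2,q}$ whose own level is not removed (the second failure mode). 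A similar stray, $((q-2)/3,1)$ with $q\equiv 2\bmod 3$, defeats part (i) already for $[-1,0]$. So the per-alcove identity, and with it the claimed formula, fails for compatible $\Psi$ in general; no sharpening of your lifting argument can close this without strengthening the hypothesis on $\Psi$ (note that for $\Psi=\Phi^+$ every bottom face involves an $I\cap\Psi$ facet and every top face a $J\cap\Psi$ facet, which is why the classical cases in Remark \ref{rem:type-I} are unaffected). The paper's displayed proof is silent on this very point, so the gap is not one your write-up introduced --- but your proposal does not close it either.
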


\begin{proof} 
Proofs of (i) and (ii) are similar, and both are similar in spirit to the proof of \cite[Theorem 5.1]{Y18W}. 
See also Proof of Theorem \ref{thm:shift} in this paper.
First, we give a proof for (i).
Since both sides are quasi-polynomials, it is sufficient to prove the equality for $q \gg 0$ (actually, $q>(a+b+3)\h$ is sufficient). 
By Proposition \ref{prop:affine-remove}, for $i \in [N]$, 
\begin{align*}
 &\,   \#\left(qA_i^\diamondsuit  \cap Z(\Phi^\vee) \setminus  \bigcup_{\mu\in \Psi,k\in \Z, m\in[-a, b]}H_{\mu,kq+m}  \right) \\
= &  \, \#  \left\{x \in Z(\Phi^\vee) \middle|
\begin{array}{c}
      (\alpha,x) \ge qk_\alpha +b+1\,(\alpha \in I \cap\Psi)\\
            (\eta,x) > qk_\eta\,(\eta \in I \cap\Psi^c)\\
       (\beta,x) \le qk_\beta -a-1\,(\beta \in J\cap\Psi)\\
        (\delta,x) \le qk_\delta \,(\delta \in J\cap\Psi^c)
    \end{array}
\right\}   \\
= &  \, {\rm L}_{\overline{A^\circ}}(q-(b+1)\overline{\asc}_\Psi(A_i^\circ) -{\asc}_\Psi(A_i^\circ) -(a+1)\overline{\dsc}_\Psi(A_i^\circ)).
\end{align*}

By Proposition \ref{prop:bijection}, we have
$$
\chi^{\quasi}_{\mathrm{I}_{\Psi}^{[-a,b]}}(q) = \sum_{i \in [N]} {\rm L}_{\overline{A^\circ}}(q-(b+1)\overline{\asc}_\Psi(A_i^\circ) -{\asc}_\Psi(A_i^\circ) -(a+1)\overline{\dsc}_\Psi(A_i^\circ)).
$$

For (ii), note that for $i \in [N]$, 
\begin{align*}
&\,  \#\left(qA_i^\diamondsuit  \cap Z(\Phi^\vee) \setminus  \bigcup_{\mu\in \Psi,k\in \Z, m\in[1, b]}H_{\mu,kq+m}  \right) \\
= &\, \#  \left\{x \in Z(\Phi^\vee) \middle|
\begin{array}{c}
      (\alpha,x) \ge qk_\alpha +b+1\,(\alpha \in I \cap\Psi)\\
            (\eta,x) > qk_\eta\,(\eta \in I \cap\Psi^c)\\
       (\beta,x) \le qk_\beta \,(\beta \in J\cap\Psi)\\
        (\delta,x) \le qk_\delta \,(\delta \in J\cap\Psi^c)
    \end{array}
\right\} \\
= &\, {\rm L}_{\overline{A^\circ}}(q-(b+1) \overline{\asc}_\Psi(A_i^\circ) - {\asc}_\Psi(A_i^\circ)).
\end{align*}
\end{proof}

\begin{remark}
\label{rem:type-I}
Theorem \ref{thm:CQP-I} is a generalization of several known results.
 \begin{enumerate}[(a)]
\item When $\Psi=\Phi^+$, we obtain \cite[Theorem 1.2]{A04} ($a=b \ge 0$), and Theorem 5.1 ($a=b-1 \ge 0$), Theorem 5.2 ($b \ge 1$), Theorem 5.3 ($b=n+k$, $a=k-1$, $n,k \ge 1$) in \cite{Y18W}. When $\Psi=\emptyset$, we obtain \cite[Theorem 4.8]{Y18W}.
\item When $a=b=0$ (and $\Psi \subseteq \Phi^+$ is compatible), we obtain Theorem \ref{thm:shift}.
 \end{enumerate}
\end{remark}

By using the same method, we have the following result for the arrangements of type II.
 \begin{theorem}\label{thm:CQP-II}
Let $\Psi$ be a compatible subset of $\Phi^+$.
 \begin{enumerate}[(i)]
\item If $a, b, c, d\ge 0$, then 
\begin{align*}
\chi^{\quasi}_{\mathrm{II}_{\Psi,\Psi^c}^{[a,b],[c,d]}}(q) &= \sum_{i \in [N]} {\rm L}_{\overline{A^\circ}}(q-(b+1) \overline{\asc}_\Psi(A_i^\circ) -(d+1){\asc}_\Psi(A_i^\circ) \\
  & \qquad \qquad \quad - (a+1) \overline{\dsc}_\Psi(A_i^\circ) -(c+1){\dsc}_\Psi(A_i^\circ)).
\end{align*}
\item  If $a,b \ge 0, d \ge 1$, then 
\begin{align*}
\chi^{\quasi}_{\mathrm{II}_{\Psi,\Psi^c}^{[a,b],[1,d]}}(q)  &= \sum_{i \in [N]} {\rm L}_{\overline{A^\circ}}(q-(b+1) \overline{\asc}_\Psi(A_i^\circ) - (d+1){\asc}_\Psi(A_i^\circ) \\
  & \qquad \qquad \quad -(a+1) \overline{\dsc}_\Psi(A_i^\circ)).
\end{align*}
\item  If $b, d \ge 1$, then 
$$
\chi^{\quasi}_{\mathrm{II}_{\Psi,\Psi^c}^{[1,b],[1,d]}}(q)  = \sum_{i \in [N]} {\rm L}_{\overline{A^\circ}}(q-(b+1) \overline{\asc}_\Psi(A_i^\circ) - (d+1){\asc}_\Psi(A_i^\circ)).
$$
\end{enumerate}
\end{theorem}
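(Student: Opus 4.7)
The plan is to adapt verbatim the strategy of Theorem \ref{thm:CQP-I}, refining the three-way split of shifts used there into a four-way split that distinguishes facets by whether the supporting root lies in $\Psi$ or in $\Psi^c$. For each alcove $A_i^\diamondsuit \subseteq P^\diamondsuit$ from the Worpitzky partition (Theorem \ref{thm:Worpitzky}), its facets decompose into four types indexed by $I \cap \Psi$, $I \cap \Psi^c$, $J \cap \Psi$, $J \cap \Psi^c$, where $I$ collects the facets with strict inequality $>$ and $J$ those with non-strict inequality $\le$ (see Definition \ref{def:partial}). Compatibility of $\Psi$, exactly as in the proof of Theorem \ref{thm:shift}, ensures that removing the affine hyperplanes $H_{\mu, kq+m}$ (for $\mu \in \Psi$, $m \in [a,b]$) and $H_{\nu, kq+m}$ (for $\nu \in \Psi^c$, $m \in [c,d]$) cuts $A_i^\diamondsuit$ only along its facets, and not along lower-dimensional faces.

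For case (i), with $a,b,c,d \ge 0$, I would first use Proposition \ref{prop:bijection} together with Theorem \ref{thm:Worpitzky} to rewrite $\chi^{\quasi}_{\mathrm{II}_{\Psi,\Psi^c}^{[a,b],[c,d]}}(q)$ as a sum over $i \in [N]$ of the number of coweight lattice points in $qA_i^\diamondsuit$ avoiding all the prescribed affine hyperplanes. For fixed $i$, this count becomes the number of lattice points satisfying the tightened system $(\alpha,x) \ge qk_\alpha + b+1$ for $\alpha \in I \cap \Psi$; $(\eta,x) \ge qk_\eta + d+1$ for $\eta \in I \cap \Psi^c$; $(\beta,x) \le qk_\beta - a-1$ for $\beta \in J \cap \Psi$; and $(\delta,x) \le qk_\delta - c-1$ for $\delta \in J \cap \Psi^c$. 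Via the translation bijection appearing in the proof of Proposition \ref{prop:affine-remove}, this count equals ${\rm L}_{\overline{A^\circ}}$ evaluated at $q$ minus the corresponding sum of simple-root coefficients $c_j$. Invoking the extensions of $\asc_\Psi,\overline{\asc}_\Psi,\dsc_\Psi,\overline{\dsc}_\Psi$ to alcoves (compare Definitions \ref{def:asc-ext} and \ref{def:others}) converts these sums of $c_j$'s into exactly $(b+1)\overline{\asc}_\Psi(A_i^\circ) + (d+1)\asc_\Psi(A_i^\circ) + (a+1)\overline{\dsc}_\Psi(A_i^\circ) + (c+1)\dsc_\Psi(A_i^\circ)$, yielding (i).

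Cases (ii) and (iii) are only mildly different, so I would treat them as corollaries of the same analysis. In (ii), the interval $[1,d]$ does not contain $0$, so for a facet with $\delta \in J \cap \Psi^c$ the alcove constraint $(\delta,x) \le qk_\delta$ already forces $(\delta,x) \le qk_\delta < qk_\delta + m$ for every $m \in [1,d]$; hence no shift is produced for this facet type, eliminating the $\dsc_\Psi$ contribution. Case (iii) applies the same observation to both $J \cap \Psi^c$ and $J \cap \Psi$, killing both $\dsc_\Psi$ and $\overline{\dsc}_\Psi$. This is precisely the asymmetric-interval subtlety flagged in Remark \ref{rem:intervals}, but here the asymmetry works in our favor because no lower-bound hyperplanes intrude into $A_i^\diamondsuit$.

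The main obstacle I anticipate is purely bookkeeping: ensuring that the four-way split of facets of $A_i^\diamondsuit$ is unambiguously matched with the four statistics of Definition \ref{def:others} (and that the identity $\asc_\Psi + \overline{\asc}_\Psi + \dsc_\Psi + \overline{\dsc}_\Psi = \h$ is respected in the accounting). The genuinely geometric inputs, namely compatibility (to rule out lattice points appearing on non-facet faces of $A_i^\diamondsuit$) and Proposition \ref{prop:affine-remove} (to reduce each constrained count to a shifted Ehrhart value), are already in place, so no substantially new idea beyond Theorem \ref{thm:CQP-I} is required.
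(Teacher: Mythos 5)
Your proposal is correct and coincides with the paper's intended argument: the paper offers no separate proof of Theorem \ref{thm:CQP-II}, saying only that it follows ``by using the same method'' as Theorem \ref{thm:CQP-I}, and your four-way refinement of the facet shifts (Worpitzky partition plus Proposition \ref{prop:bijection}, the translation bijection of Proposition \ref{prop:affine-remove}, compatibility to rule out contributions from non-facet faces, and the observation that intervals $[1,b]$, $[1,d]$ produce no shift on the $J$-side facets) is exactly that method, with the dictionary $I\cap\Psi \leftrightarrow \overline{\asc}_\Psi$, $I\cap\Psi^c \leftrightarrow \asc_\Psi$, $J\cap\Psi \leftrightarrow \overline{\dsc}_\Psi$, $J\cap\Psi^c \leftrightarrow \dsc_\Psi$ correctly applied. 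Two minor points, both consistent with what you wrote: as in Theorem \ref{thm:CQP-I}(i), the intervals in parts (i)--(ii) must be read as $[-a,b]$ and $[-c,d]$ (your tightened constraints $(\beta,x)\le qk_\beta-a-1$ and $(\delta,x)\le qk_\delta-c-1$ implicitly do so, and a literal $[a,b]$ with $a\ge 1$ would contradict the stated shift by Remark \ref{rem:intervals}), and one should, as in the paper's proof of Theorem \ref{thm:CQP-I}, first reduce to $q\gg 0$ using that both sides are quasi-polynomials.
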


\begin{remark}
\label{rem:more-components} \begin{enumerate}[(a)]
\item One can work with other intervals $[a,b]$ but the computation may become more complicated (see Remark \ref{rem:intervals}).
\item  One can define and study the arrangement $\bigsqcup_{k=1}^n \mathrm{I}_{\Psi_k}^{[a_k,b_k]}$ where $\Phi^+=\bigsqcup_{k=1}^n \Psi_k$ with $n \ge 3$. 
See, e.g.,  \cite[Theorem 3.11]{A96} for an example when $n=3$.
We choose not to develop this direction here.
 \end{enumerate}
\end{remark}

It would be interesting to  characterize the compatibility in the case of type $A$ (e.g., in terms of graphs) and compare the following result with \cite[Theorem 3.2]{AR12} and \cite[Theorem 3.9]{A96} (see Remark \ref{rem:deform} for the notation).
\begin{corollary}\label{cor:compare}   
Define $M_\Psi(t):= \frac1f \sum_{w\in W}t^{\h+\overline{\asc}_\Psi(w)}$. 
If $\Psi$ is compatible, then
$$\chi^{\quasi}_{\mathrm{II}_{\Psi,\Psi^c}^{[0,1],[0,0]}}(q)  = (M_\Psi(S){\rm L}_{\overline{A^\circ}})(q).$$
\end{corollary}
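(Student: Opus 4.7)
The plan is to deduce this corollary from Theorem \ref{thm:CQP-II}(i) by algebraic manipulation, exactly as Theorem \ref{thm:shift} follows the same template via Theorem \ref{thm:equivalent}. First I would apply Theorem \ref{thm:CQP-II}(i) with $a=c=d=0$ and $b=1$ to obtain
\begin{equation*}
\chi^{\quasi}_{\mathrm{II}_{\Psi,\Psi^c}^{[0,1],[0,0]}}(q) = \sum_{i \in [N]} {\rm L}_{\overline{A^\circ}}\bigl(q - 2\overline{\asc}_\Psi(A_i^\circ) - \asc_\Psi(A_i^\circ) - \overline{\dsc}_\Psi(A_i^\circ) - \dsc_\Psi(A_i^\circ)\bigr).
\end{equation*}
Next, I would invoke the identity $\asc_\Psi + \overline{\asc}_\Psi + \dsc_\Psi + \overline{\dsc}_\Psi = \h$ (noted just after Definition \ref{def:others}, and extended alcove-wise by the remark preceding Theorem \ref{thm:CQP-II}) to rewrite the argument of ${\rm L}_{\overline{A^\circ}}$ as $q - \h - \overline{\asc}_\Psi(A_i^\circ)$.

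The remaining task is to express $M_\Psi(t)$ as a sum over alcoves in $P^\diamondsuit$. Here I would follow the proof of Theorem \ref{thm:equivalent} verbatim: by Lemma \ref{lem:extend}, $\overline{\asc}_\Psi$ descends to a well-defined function on alcoves, and the group $\widehat{W_{\rm aff}} = W \ltimes Z(\Phi^\vee)$ has exactly $f$ elements carrying $A^\circ$ to any given alcove in $P^\diamondsuit$. Consequently,
\begin{equation*}
M_\Psi(t) = \frac{1}{f}\sum_{w \in W} t^{\h + \overline{\asc}_\Psi(w)} = \sum_{i \in [N]} t^{\h + \overline{\asc}_\Psi(A_i^\circ)}.
\end{equation*}
Applying the definition of the shift operator (Definition \ref{def:shift-operator}) then gives
\begin{equation*}
(M_\Psi(S){\rm L}_{\overline{A^\circ}})(q) = \sum_{i \in [N]} {\rm L}_{\overline{A^\circ}}\bigl(q - \h - \overline{\asc}_\Psi(A_i^\circ)\bigr),
\end{equation*}
which matches the simplified expression from the first step.

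There is no serious obstacle here, as the argument is entirely parallel to the passage from Theorem \ref{thm:CQP-I} (or Theorem \ref{thm:shift}) to its Eulerian-polynomial reformulation. The only point requiring a small verification is the ``$2\overline{\asc}_\Psi + \asc_\Psi + \overline{\dsc}_\Psi + \dsc_\Psi = \h + \overline{\asc}_\Psi$'' collapse, which is why precisely the interval pair $([0,1],[0,0])$ produces a clean single-statistic shift and hence admits a one-variable generating polynomial $M_\Psi$; other interval choices from Theorem \ref{thm:CQP-II} would instead require multi-variable Eulerian-type polynomials to be reformulated as shift operators.
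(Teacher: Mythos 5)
Your proposal is correct and follows exactly the route the paper intends: Corollary \ref{cor:compare} is stated without a separate proof precisely because it is the specialization $a=c=d=0$, $b=1$ of Theorem \ref{thm:CQP-II}(i), combined with the identity $2\overline{\asc}_\Psi+\asc_\Psi+\overline{\dsc}_\Psi+\dsc_\Psi=\h+\overline{\asc}_\Psi$ and the alcove-summation form $M_\Psi(t)=\sum_{i\in[N]}t^{\h+\overline{\asc}_\Psi(A_i^\circ)}$ obtained by the same $\widehat{W_{\rm aff}}$-counting argument as Theorem \ref{thm:equivalent}. Your only implicit extrapolation---that Lemma \ref{lem:extend}(ii) holds verbatim for $\overline{\asc}_\Psi$ so that it descends to alcoves---is exactly what the paper asserts after Definition \ref{def:others}, so there is no gap.
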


\noindent
\textbf{Acknowledgements.} 
The first author was partially supported by Mitacs Canada Globalink Research Award to visit Japan and carry out the collaboration.
The second author  is partially supported by JSPS Research Fellowship for Young Scientists Grant Number 19J12024.
The third author  is partially supported by JSPS KAKENHI Grant Number JP18H01115. 
The second author would like to thank Akiyoshi Tsuchiya for pointing out an 
error in the proof of the main result of the manuscript (Theorem \ref{thm:shift}) in a previous version.

\bibliographystyle{alpha} 
\bibliography{references}

\end{document}